\newcommand*{\email}[1]{%
    \normalsize\href{mailto:#1}{#1}\par
    }
\newtheorem{theorem}{Theorem}[section]
\newtheorem{lemma}[theorem]{Lemma}
\theoremstyle{definition}
\newtheorem{proposition}[theorem]{Proposition}
\newtheorem{remark}[theorem]{Remark}
\numberwithin{equation}{section}
\newcommand{\spann}{\mathrm{span}}
\newcommand{\spin}{\mathrm{Spin}}
\newcommand{\trace}{\mathrm{tr}}
\newcommand{\diag}{\mathrm{diag}}
\begin{document}
\title {$\spin(7)$ metrics of cohomogeneity one with Aloff--Wallach spaces as principal orbits}
\author[]{Hanci Chi}
\affil[]{Department of Foundational Mathematics, Xi'an Jiaotong-Liverpool University\\
\email{hanci.chi@xjtlu.edu.cn}}

\date{\today}

\maketitle
\abstract
In this article, we construct two continuous 1-parameter family of non-compact $\spin(7)$ metrics with both chiralities, with the principal orbit an Aloff--Wallach space $N_{k,l}$ and the singular orbit $\mathbb{CP}^2$. For a generic $N_{k,l}$, metrics constructed are locally asymptotically conical (ALC). For $N_{1,1}$, we construct two continuous 1-parameter families with geometric transition from asymptotically conical (AC) metrics to ALC metrics.

\tableofcontents
\section{Introduction}
Metrics with $\spin(7)$ holonomy are interesting subjects in differential geometry and theoretical physics. The first example was constructed in \cite{bryant_metrics_1987} on the cone over Berger space $SO(5)/SO(3)$. The first complete example was constructed in \cite{bryant_construction_1989} and \cite{gibbons_einstein_1990} on the spinor bundle over $\mathbb{S}^4$. The first compact example was constructed in \cite{joyce_compact_1996}. In recent years, $\spin(7)$ metrics with an Aloff--Wallach space as principal orbit became an active field of research. It is expected that a cohomogeneity one space with an Aloff--Wallach space as its principal orbit admits a continuous 1-parameter family of $\spin(7)$ metrics with asymptotically locally conical (ALC) asymptotics: each metric's asymptotic limit is a product between a 7-dimensional Ricci-flat cone and a circle with fixed radius $l>0$. Moreover, with $l\to \infty$, the metric converges to an asymptotically conical (AC) one: its asymptotic limit is an 8-dimensional cone. In this way, one has a continuous 1-parameter family of metrics with non-maximal volume growth and its boundary is a metric with maximal volume growth.

An \emph{Aloff--Wallach space} is a homogeneous space $N_{k,l}:=SU(3)/U(1)_{k,l}$, where $U(1)_{k,l}$ is embedded in $SU(3)$ as $\diag\left(e^{k\sqrt{-1}t},e^{l\sqrt{-1}t},e^{-(k+l)\sqrt{-1}t}\right)$ with integers $k$ and $l$. Without loss of generality, we assume that $k$ and $l$ are coprime. Using outer automorphism and Weyl group of $SU(3)$ to normalize $(k,l)$, we assume $k\geq l\geq 0$ in this article. Due to the geometric differences, we call $N_{k,l}$ an \emph{exceptional Aloff--Wallach space} if $kl(k-l)=0$ and a \emph{generic Aloff--Wallach space} if otherwise. With an Aloff--Wallach space as the principal orbit, one can reduce the Einstein equations to a system of nonlinear second order ODEs. The $\spin(7)$ condition, being Ricci-flat, is reduced to a system of first order ODEs. The isolated explicit solution of cohomogeneity one $\spin(7)$ metric with $N_{1,0}$ as principal orbit and $\mathbb{CP}^2$ as singular orbit was given in \cite{gukov_m-theory_2002}. In \cite{reidegeld_exceptional_2011}, the author proved that singular orbit of a cohomogeneity one $\spin(7)$ metric with a fixed $N_{k,l}$ can only be either $\mathbb{S}^5$ or $\mathbb{CP}^2$. The singular orbit $\mathbb{S}^5$ appears exclusively for the case $N_{1,0}$.  Recently, some ALC $\spin(7)$ metrics with $N_{1,0}$ as principal orbit and $\mathbb{S}^5$ as singular orbit were constructed in \cite{foscolo_complete_2019}. The setting with $N_{1,0}$ as the principal orbit was further studied in \cite{lehmann_geometric_2020}, in which examples in \cite{gukov_m-theory_2002} and \cite{foscolo_complete_2019} were extended to two continuous families of ALC $\spin(7)$ metrics. The boundary of each family is an AC $\spin(7)$ metric. Explicit solutions with $N_{1,1}$ as principal orbit were given in \cite{cvetic_hyper-kahler_2001} and \cite{kanno_spin7_2002-1}. In \cite{bazaikin_new_2007} and \cite{bazaikin_noncompact_2008}, it is observed that formulas for $\spin(7)$ condition in \cite{cvetic_cohomogeneity_2002} and \cite{cvetic_new_2002} can be applied in a broader setting where the cohomogeneity one manifold is deformed from a HyperK\"ahler cone. Specifically, by using the 3-Sasakian structure on $N_{1,1}$, a 2-parameter family of $\spin(7)$ metrics was constructed on a complex line bundle over $SU(3)/T^2$ in \cite{bazaikin_new_2007} and a 2-parameter family of $\spin(7)$ metrics was constructed on a quaternionic line bundle over $\mathbb{CP}^2$ in \cite{bazaikin_noncompact_2008}. Explicit isolated solutions with a generic $N_{k,l}$ as principal orbit are included in \cite{kanno_spin7_2002-1} and\cite{cvetic_cohomogeneity_2002}. 

This article aims to prove the global existence of two continuous 1-parameter families of $\spin(7)$ metrics of cohomogeneity one with \emph{any} Aloff--Walach spaces $N_{k,l}$ as principal orbit. Past examples mentioned above are partially recovered in the construction. Specifically, a fixed $N_{k,l}$ can be viewed as fiber bundles over $\mathbb{CP}^2$ with lens spaces $L(1,|k+l|)$, $L(1,|k|)$ and $L(1,|l|)$ as fibers, respectively. Let $i\in\{k+l,k,l\}$ and $i\neq 0$, define $M_{k,l}^{(i)}$ to be the $\mathbb{R}^4/\mathbb{Z}_{|i|}$ bundle over $\mathbb{CP}^2$ with $N_{k,l}$ as principal orbit. We prove the following theorems.
\begin{theorem}
\label{thm: main1}
Let $k\geq l\geq 0$ be coprime. Let $i\in\{k+l,k\}$.
If $N_{k,l}$ is generic, there exists a continuous 1-parameter family of forward complete $\spin(7)$ metrics $\left\{\gamma^{(i)}_{(s_1,s_2)}\mid (s_1,s_2)\in\mathbb{S}^1, s_1,s_2>0\right\}$ on $M^{(i)}_{k,l}$. $\spin(7)$ metrics on $M^{(k+l)}_{k,l}$ and the ones on $M^{(k)}_{k,l}$ have opposite chirality. All $\gamma^{(k+l)}$'s and $\gamma^{(k)}$'s have ALC asymptotics with space at infinity as an $\mathbb{S}^1$ bundle over the $G_2$ cone over the nearly-K\"ahler $SU(3)/T^2$.
\end{theorem}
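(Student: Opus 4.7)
The plan is to reduce the $\spin(7)$ condition to a first-order ODE system and analyze the long-time behavior of solutions emerging from the singular orbit. First I would set up the cohomogeneity one ansatz $g = dt^2 + g_t$, where $g_t$ is an $SU(3)$-invariant metric on $N_{k,l}$. Decomposing $T_eN_{k,l}$ under the adjoint action of $U(1)_{k,l}$ gives three inequivalent real $2$-dimensional root-space summands plus a $1$-dimensional trivial summand, so $g_t$ has four functional parameters $f_1(t),\ldots,f_4(t)$. Writing down the orientation-compatible Cayley $4$-form determined by the choice $i\in\{k+l,k\}$ reduces the closure condition to a first-order $\spin(7)$ ODE system, with the two choices of $i$ producing systems of opposite chirality.

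Next I would impose smoothness at the singular orbit. As $t\to 0$, the $3$-dimensional lens space direction $L(1,|i|)$ (built from one root-space summand together with the circle summand, singled out by $i$) must collapse linearly while the $4$-plane tangent to $\mathbb{CP}^2$ stays non-degenerate. Standard Eschenburg--Wang type vanishing/non-degeneracy conditions on the $f_i$, together with the first-order $\spin(7)$ equations, pin down the Taylor coefficients at $t=0$ up to finitely many free parameters; after removing the $t$-rescaling freedom, what remains is a genuine $1$-parameter family, which I would coordinatize as the open arc $\{(s_1,s_2)\in\mathbb{S}^1\mid s_1,s_2>0\}$. A formal power series / Cauchy--Kowalevski argument, in the spirit of Foscolo--Haskins--Nordstr\"om, then produces for each $(s_1,s_2)$ a unique real-analytic local solution $\gamma^{(i)}_{(s_1,s_2)}$ that extends smoothly across $\mathbb{CP}^2$.

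The main obstacle is upgrading these local solutions to forward-complete ones and identifying their asymptotic geometry. My strategy would be to exhibit an invariant region in the phase space of the $\spin(7)$ system, bounded on the conical side by the Ricci-flat $G_2$ cone over the nearly-K\"ahler $SU(3)/T^2$ (lifted to $8$ dimensions by a flat $\mathbb{S}^1$ factor) and on the collapsed side by suitable degenerate configurations of the $f_i$, and to prove that every trajectory issued from an admissible $(s_1,s_2)$ initial condition remains inside this region via a monotone functional built from ratios of the $f_i$. This monotonicity rules out finite-time degeneration of any $f_i$ and yields long-time existence. With forward completeness in hand, I would linearize the first-order system at the ALC critical point at infinity --- namely the $\mathbb{S}^1$-bundle of fixed radius $l>0$ over the $G_2$ cone on $SU(3)/T^2$ --- and use stable-manifold analysis to show that each trajectory actually converges to this critical point, with the limiting $l$ depending continuously on $(s_1,s_2)$. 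Continuous dependence on initial data then assembles the individual solutions into the claimed continuous $1$-parameter family of ALC $\spin(7)$ metrics on $M^{(i)}_{k,l}$, the opposite chirality for $i=k+l$ versus $i=k$ being visible directly from the sign structure of the Cayley $4$-form chosen in the first step.
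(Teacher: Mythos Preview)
Your high-level architecture matches the paper's: local existence near the singular orbit, a compact invariant region in phase space to force long-time existence, then convergence to an ALC critical point. The chirality observation and the parametrization by an arc in $\mathbb{S}^1$ are also as in the paper.

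Where your proposal diverges---and where the real content lies---is in the mechanism for the invariant region. The paper does \emph{not} work directly with the metric functions $f_1,\dots,f_4$ and ratios thereof. Instead it introduces the coordinate change $d\eta=\trace(L)\,dt$ and the variables $X_i=\frac{\dot{f_i}/f_i}{\trace L}$, $Z_i$ (specific rational combinations of the $f_i$ divided by $\trace L$), which converts the Ricci-flat system into a polynomial vector field $V$ on $\mathbb{R}^8$ and, crucially, turns the first-order $\spin(7)$ condition into \emph{algebraic} constraints $F_i=0$ (resp.\ $H_i=0$) cutting out a $3$-dimensional invariant variety $\mathcal{C}^\pm_{\spin(7)}$. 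In these coordinates the singular orbit, the ALC end, and the AC ends all become honest critical points of $V$, so local existence is handled by Hartman--Grobman rather than by power-series arguments, and the $1$-parameter family is read off from the unstable eigenspace at $P_0^{(i)}$ tangent to $\mathcal{C}^\pm_{\spin(7)}$.

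The compact invariant set is then $\check{\mathcal{S}}=\mathcal{C}^+_{\spin(7)}\cap\{0\le Z_4\le \tfrac{6\Delta}{k+l}\}\cap\{Z_2+Z_3\le \tfrac{2}{3}\}$ (and an analogous $\check{\mathcal{T}_k}$ for the opposite chirality). The monotone quantity is $Z_4$ itself, but its monotonicity is \emph{not} automatic: one must show $\mathcal{G}-X_4\ge 0$ throughout $\check{\mathcal{S}}$. This is the genuine obstacle, and the paper handles it by sandwiching with auxiliary polynomials $\mathcal{Q}$ and $\mathcal{A}$ that are symmetric in $Z_2+Z_3$, then reducing the required inequality to the non-negativity of an explicit resultant $r_{(\alpha,\beta)}(q_1,q_2)$ on $[0,\tfrac12]\times(0,1]$, verified by hand in the Appendix (and for the $M^{(k)}_{k,l}$ case by a three-variable resultant checked with computer algebra). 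Your phrase ``monotone functional built from ratios of the $f_i$'' does not anticipate this step, and without the polynomial reformulation it is unclear how one would even formulate, let alone prove, the needed inequality. This is the gap you should fill: the coordinate change is what makes both the critical-point picture and the algebraic invariant-set argument possible.
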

Note that for a generic $N_{k,l}$ as the principal orbit, metrics $\gamma_{(s_1,s_2)}^{(k+l)}$ on $M_{k,l}^{(k+l)}$ do not include the explicit ALC $\spin(7)$ metrics in \cite{cvetic_cohomogeneity_2002} and \cite{kanno_spin7_2002-1}. Hence it is likely to extend the continuous family of ALC metrics in Theorem \ref{thm: main1} to a larger family using other methods. Metrics $\gamma_{(s_1,s_2)}^{(k)}$ on $M_{k,l}^{(k)}$ are new to the best knowledge of the author. 

It is worth mentioning that our method for proving Theorem \ref{thm: main1} also applies for the case of exceptional $N_{1,0}$. Only ALC metrics are constructed in this way. For each $(s_1,s_2)$, the obtained metrics $\gamma_{(s_1,s_2)}^{(1)}$ with both chiralities on $M_{1,0}^{(1)}$ can be identified by a global $\mathbb{Z}_2$ symmetry. They are part of the results obtained in \cite{lehmann_geometric_2020}. They have smooth extension to the singular orbit $\mathbb{CP}^2$. For a more complete study on the $\spin(7)$ metrics with $N_{1,0}$ as principal orbit, please see \cite{foscolo_complete_2019} and \cite{lehmann_geometric_2020} for more details.

We also consider the case where the principal orbit is the exceptional $N_{1,1}$
\begin{theorem}
\label{thm: main2}
There exists a continuous 1-parameter family of forward complete $\spin(7)$ metrics $\left\{\gamma^{(i)}_{(s_1,s_2)}\mid (s_1,s_2)\in\mathbb{S}^1, s_1\geq s^{(i)}_*,s_2>0\right\}$ on $M^{(i)}_{1,1}$ with $s^{(2)}_*= -\frac{3}{\sqrt{10}}$ and $s^{(1)}_*= -\frac{1}{\sqrt{2}}$. $\spin(7)$ metrics on $M^{(2)}_{1,1}$ and the ones on $M^{(1)}_{1,1}$ have opposite chiralities. All $\gamma^{(2)}_{(s_1,s_2)}$'s and $\gamma^{(1)}_{(s_1,s_2)}$'s have an ALC asymptotics with space at infinity as an $\mathbb{S}^1$ bundle over the $G_2$ cone over the nearly-K\"ahler $SU(3)/T^2$ if $s_1>s_*$. If $s_1=s_*$, then the metrics have AC asymptotics with tangent cones at infinity with base homogeneous Einstein metrics on $N_{1,1}$. 
\end{theorem}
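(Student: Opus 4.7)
The plan is to follow the architecture of Theorem \ref{thm: main1}: reduce the torsion-free $\spin(7)$ condition to a first-order ODE system on the principal orbit, parametrize the smooth local extensions near the singular orbit $\mathbb{CP}^2$, prove global forward existence, and analyze the asymptotics at infinity. What is special about $N_{1,1}$ is that the reduced ODE system should admit an extra invariant subvariety corresponding to conical solutions, and consequently the circle of admissible initial data should extend past the $s_1, s_2 > 0$ quadrant allowed in the generic case, down to a negative critical value $s_1 = s^{(i)}_*$ at which the flow lies exactly on this invariant subvariety.

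First I would write down the cohomogeneity one ansatz. The isotropy representation of $N_{1,1}$ decomposes into four irreducible summands, so the metric depends on four functions $f_1(t),\ldots,f_4(t)$. For each chirality $i \in \{2,1\}$, one selects the corresponding $\spin(7)$ structure on $M^{(i)}_{1,1}$ and derives the resulting first-order ODE system. Smoothness at $\mathbb{CP}^2$, whose normal fiber is $\mathbb{R}^4/\mathbb{Z}_{|i|}$, determines a two-dimensional space of admissible initial conditions, which I would normalize by the unit circle $(s_1,s_2)\in\mathbb{S}^1$. I expect the critical value $s^{(i)}_*$ to arise as the unique parameter on the smoothness circle for which the initial data lies on the stable manifold of the fixed point corresponding to the Ricci-flat cone over the $SU(3)$-invariant Einstein metric on $N_{1,1}$. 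The explicit numerical values $-3/\sqrt{10}$ and $-1/\sqrt{2}$ should then drop out of a linearization computation at this fixed point, combined with the specific algebraic normalization used for each chirality.

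For each admissible $(s_1,s_2)$ I would obtain short-time existence by a Picard iteration adapted to the degenerate ODE at the singular initial point, and then establish forward completeness by constructing a convex invariant region in phase space, bounded away from the walls $\{f_j = 0\}$ and containing the initial data. This region can likely be built from a few monotone combinations of the $f_j$ using the explicit form of the $\spin(7)$ equations. To classify the behavior at infinity, I would show that inside this region the flow is driven either to the ALC ray already identified in the generic Theorem \ref{thm: main1} (when $s_1 > s^{(i)}_*$), or onto the invariant subvariety limiting to the Einstein fixed point (when $s_1 = s^{(i)}_*$); the first case produces an $\mathbb{S}^1$ bundle over the $G_2$ cone over $SU(3)/T^2$, while the second yields the AC tangent cone over $N_{1,1}$.

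The hard part will be establishing the ALC-to-AC transition rigorously and continuously. One needs uniform estimates showing that the asymptotic circle radius diverges as $s_1\searrow s^{(i)}_*$, together with a center-stable manifold analysis near the Einstein fixed point to rule out trajectories that neither reach the ALC attractor nor land on the AC stable manifold. Because the AC solution corresponds to a saddle-type equilibrium, the family of ALC solutions must approach this equilibrium tangentially to the center direction, and controlling this tangency uniformly in $s_1$ is the step I expect to consume most of the technical effort.
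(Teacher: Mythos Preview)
Your overall architecture matches the paper's---cohomogeneity-one reduction, local existence via linearization at the critical point representing the singular orbit, global existence via a compact invariant region, and asymptotic analysis via $\omega$-limit sets. However, the paper's execution for $N_{1,1}$ is considerably more elementary than what you anticipate, and one of your expectations about where the numbers $s_*^{(i)}$ come from is misplaced.

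The key difference is this: rather than invoking stable or center-stable manifold theory at the Einstein fixed point, the paper exploits the extra symmetry of $N_{1,1}$ to build compact invariant sets $\tilde{\mathcal{S}}$ (for $i=2$) and $\tilde{\mathcal{T}}$ (for $i=1$) whose boundaries contain a \emph{one-dimensional invariant algebraic curve} joining the singular-orbit critical point $P_0^{(i)}$ directly to the relevant AC critical point $P_{AC}$. Concretely, for $i=2$ the set is $\mathcal{C}^+_{\spin(7)}\cap\{X_2=X_3,\ Z_2=Z_3\}\cap\{\sqrt{Z_2Z_3}\,Z_4\le 3\}$, and for $i=1$ it is $\mathcal{C}^-_{\spin(7)}\cap\{Z_2+Z_3\ge Z_1\}\cap\{(Z_2+Z_3)Z_4\le 6\}$; in each case the locus where both defining inequalities are equalities is itself flow-invariant and one-dimensional. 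The AC trajectory is therefore found \emph{explicitly}, not through a transversality argument. Trajectories with $s_1>s_*^{(i)}$ start in the interior of the invariant set, while the trajectory with $s_1=s_*^{(i)}$ is tangent to (hence contained in) the boundary curve.

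Two consequences. First, the values $-3/\sqrt{10}$ and $-1/\sqrt{2}$ are not read off from the linearization at the Einstein fixed point as you propose, but from the linearization at $P_0^{(i)}$: they identify which combination $s_1v_1+s_2v_2$ of unstable eigenvectors there is tangent to the one-dimensional boundary curve. Second, the ``hard part'' you flag---uniform control of the ALC-to-AC tangency and a center-stable manifold analysis near the saddle---is bypassed entirely. Forward completeness follows from compactness of the invariant set, and the ALC/AC dichotomy from a monotone quantity ($\sqrt{Z_2Z_3}\,Z_4$, respectively $(Z_2+Z_3)Z_4$) which is nonincreasing along trajectories: either it is eventually strictly below its boundary value, forcing $Z_4\to 0$ and convergence to the ALC sink $P_1$, or it stays on the boundary curve from the start and the trajectory runs to $P_{AC}$. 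Your proposed route could in principle be made to work, but it replaces a short algebraic verification with a genuinely delicate dynamical argument that the paper never needs.
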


Metrics $\gamma_{(s_1,s_2)}^{(2)}$ on the orbifold $M_{1,1}^{(2)}$ are the ALC $\spin(7)$ metrics in \cite{cvetic_hyper-kahler_2001} and some of the solutions in \cite{bazaikin_new_2007}. It is worth mentioning that $M_{1,1}^{(1)}$ is in fact $T^*\mathbb{CP}^2$. Metrics $\gamma_{(s_1,s_2)}^{(1)}$ on $M_{1,1}^{(1)}$ are the ones that were conjectured in \cite{kanno_spin7_2002-2}. For most of the cases, we do not have metrics that shows geometric transition from AC $\spin(7)$ metrics to ALC $\spin(7)$ metrics like the ones that occur in \cite{cvetic_cohomogeneity_2002} and \cite{lehmann_geometric_2020}. However, if the principal orbit is $N_{1,1}$, we do have two continuous 1-parameter families of $\spin(7)$ metrics that have $AC$ metrics on their respective boundaries. In particular, the AC metric on $M_{1,1}^{(1)}$ is the Calabi HyperK\"ahler metric in \cite{calabi_metriques_1979}. In other words, the 1-parameter family of ALC $\spin(7)$ metrics on $M_{1,1}^{(1)}$ can be desingularized with Calabi's AC metric.

This article is structured as the following. In Section \ref{sec: Ricci-flat Equation}, we study the Ricci-flat system with an $N_{k,l}$ as the principal orbit and apply coordinate change that is similar to the one in \cite{chi_invariant_2019} and \cite{chi_einstein_2020}. The coordinate change transforms the singular orbit $\mathbb{CP}^2$, the AC asymptotics, and the ALC asymptotics to critical points of a polynomial system. The singular orbit $\mathbb{S}^5$, uniquely appears in the $N_{1,0}$ case, is blown up to the infinity in the new coordinate. Although being more complicated than the $\spin(7)$ subsystem, the Ricci-flat system has some important estimate that is not obvious in the $\spin(7)$ subsystem.

In Section \ref{sec: critical points}, we study critical points of the $\spin(7)$ subsystem and recover the locally existing result in \cite{reidegeld_exceptional_2011}. The cohomogeneity one $\spin(7)$ metrics are represented by integral curves that emanates from various critical points.

In Section \ref{sec: global}, we prove the global existence and the asymptotic limit by constructing compact invariant sets. The construction boils down to a classical algebraic geometry problem, which requires one to show non-negativity of a resultant polynomial. For readability, the very technical computation and formula are presented in the Appendix.

\textbf{Acknowledgements.} The author would like to thank NSFC for partial support under grants No. 11521101 and NO. 12071489. Thanks also go to Michael Baker, Jesse Madnick and McKenzie Wang for useful discussions.

\section{The $\spin(7)$ holonomy cohomogeneity one system}
\label{sec: Ricci-flat Equation}
In this section, we study the cohomogeneity one Ricci-flat system on $M^{(i)}_{k,l}$ derived in \cite{reidegeld_exceptional_2011}. We apply a coordinate change so that the system becomes a system of first order polynomial ODEs. The $\spin(7)$ condition, originally a first order subsystem, is then transformed to a set of algebraic equation in the new coordinate.

Consider an Aloff--Wallach space $N_{k,l}$, we fix the basis for $\mathfrak{su}(3)$ as the one in \cite{reidegeld_exceptional_2011}. Specifically, we work with the basis
\begin{equation}
\label{eqn: basis}
\begin{split}
&e_1:=\begin{bmatrix}
0&1&0\\
-1&0&0\\
0&0&0
\end{bmatrix},\quad e_2:=\sqrt{-1}\begin{bmatrix}
0&1&0\\
1&0&0\\
0&0&0
\end{bmatrix},\quad e_3:=\begin{bmatrix}
0&0&1\\
0&0&0\\
-1&0&0
\end{bmatrix},\quad e_4:=\sqrt{-1}\begin{bmatrix}
0&0&1\\
0&0&0\\
1&0&0
\end{bmatrix}\\
&e_5:=\begin{bmatrix}
0&0&0\\
0&0&1\\
0&-1&0
\end{bmatrix},\quad e_6:=\sqrt{-1}\begin{bmatrix}
0&0&0\\
0&0&1\\
0&1&0
\end{bmatrix},\\
&e_7:=\sqrt{-1}\begin{bmatrix}
2l+k&0&0\\
0&-(2k+l)&0\\
0&0&k-l
\end{bmatrix},\quad e_8:=\sqrt{-1}\begin{bmatrix}
k&0&0\\
0&l&0\\
0&0&-(k+l)
\end{bmatrix},\\
\end{split}
\end{equation}
where $e_8$ generates $\mathfrak{u}(1)_{k,l}$.
The isotropy representation $\mathfrak{su}(3)/\mathfrak{u}(1)_{k,l}$ is decomposed as 
$$
\mathfrak{su}(3)/\mathfrak{u}(1)_{k,l}=\mathfrak{i}\oplus \mathfrak{t}^{k-l}\oplus \mathfrak{t}^{2k+l}\oplus \mathfrak{t}^{k+2l},
$$
where
$$
\mathfrak{t}^{k-l}=\spann\{e_1,e_2\},\quad \mathfrak{t}^{2k+l}=\spann\{e_3,e_4\},\quad 
\mathfrak{t}^{k+2l}=\spann\{e_5,e_6\},\quad 
\mathfrak{i}=\spann\{e_7\}.
$$

Recall our convention of Aloff--Wallach spaces, we set $k$ and $l$ to be coprime and $k\geq l\geq 0$ without loss of generality. Under this setting, if $N_{k,l}$ is generic, i.e., $kl(k-l)\neq 0$, one can conclude that all $\mathfrak{t}$'s are non-trivial and have different weights. The isotropy representation then consists of 4 inequivalent irreducible summands. The group action of $SU(3)$ on \eqref{eqn: basis} generates a frame for an $SU(3)$-invariant Einstein metric $g_{N_{k,l}}$. The matrix representation of $g_{N_{k,l}}$ is hence 
\begin{equation}
\label{eqn: invariant metric}
\begin{bmatrix}
a^2&0&&&&&\\
0&a^2&&&&&&\\
&&b^2&0&&&\\
&&0&b^2&&&\\
&&&&c^2&0&\\
&&&&0&c^2&\\
&&&&&&f^2
\end{bmatrix}
\end{equation}
for some $a,b,c,f\neq 0$.
Consider the cohomogeneity one metric $g=dt^2+g_{N_{k,l}}(t)$, where each component in \eqref{eqn: invariant metric} is a function of $t$. Methods in \cite{eschenburg_initial_2000} can be applied to derive the cohomogeneity one Einstein system. Let $\Delta=k^2+kl+l^2$. As shown in \cite{reidegeld_exceptional_2011}, the cohomogeneity one Ricci-flat system is 
\begin{equation}
\label{eqn: original Einstein equation}
\begin{split}
\frac{\ddot{a}}{a}-\left(\frac{\dot{a}}{a}\right)^2&=-\left(2\frac{\dot{a}}{a}+2\frac{\dot{b}}{b}+2\frac{\dot{c}}{c}+\frac{\dot{f}}{f}\right)\frac{\dot{a}}{a}+\frac{6}{a^2}+\frac{a^2}{b^2c^2}-\frac{b^2}{a^2c^2}-\frac{c^2}{a^2b^2}-\frac{1}{2}\frac{(k+l)^2}{\Delta^2}\frac{f^2}{a^4}\\
\frac{\ddot{b}}{b}-\left(\frac{\dot{b}}{b}\right)^2&=-\left(2\frac{\dot{a}}{a}+2\frac{\dot{b}}{b}+2\frac{\dot{c}}{c}+\frac{\dot{f}}{f}\right)\frac{\dot{b}}{b}+\frac{6}{b^2}+\frac{b^2}{a^2c^2}-\frac{c^2}{a^2b^2}-\frac{a^2}{b^2c^2}-\frac{1}{2}\frac{l^2}{\Delta^2}\frac{f^2}{b^4}\\
\frac{\ddot{c}}{c}-\left(\frac{\dot{c}}{c}\right)^2&=-\left(2\frac{\dot{a}}{a}+2\frac{\dot{b}}{b}+2\frac{\dot{c}}{c}+\frac{\dot{f}}{f}\right)\frac{\dot{c}}{c}+\frac{6}{c^2}+\frac{c^2}{a^2b^2}-\frac{a^2}{b^2c^2}-\frac{b^2}{a^2c^2}-\frac{1}{2}\frac{k^2}{\Delta^2}\frac{f^2}{c^4}\\
\frac{\ddot{f}}{f}-\left(\frac{\dot{f}}{f}\right)^2&=-\left(2\frac{\dot{a}}{a}+2\frac{\dot{b}}{b}+2\frac{\dot{c}}{c}+\frac{\dot{f}}{f}\right)\frac{\dot{f}}{f}+\frac{1}{2}\frac{(k+l)^2}{\Delta^2}\frac{f^2}{a^4}+\frac{1}{2}\frac{l^2}{\Delta^2}\frac{f^2}{b^4}+\frac{1}{2}\frac{k^2}{\Delta^2}\frac{f^2}{c^4}
\end{split}
\end{equation}
with conservation law
\begin{equation}
\label{eqn: original conservation}
\begin{split}
&\left(2\frac{\dot{a}}{a}+2\frac{\dot{b}}{b}+2\frac{\dot{c}}{c}+\frac{\dot{f}}{f}\right)^2-2\left(\frac{\dot{a}}{a}\right)^2-2\left(\frac{\dot{b}}{b}\right)^2-2\left(\frac{\dot{c}}{c}\right)^2-\left(\frac{\dot{f}}{f}\right)^2\\
&=12\left(\frac{1}{a^2}+\frac{1}{b^2}+\frac{1}{c^2}\right)-2\left(\frac{a^2}{b^2c^2}+\frac{b^2}{a^2c^2}+\frac{c^2}{a^2b^2}\right)-\frac{1}{2}\frac{(k+l)^2}{\Delta^2}\frac{f^2}{a^4}-\frac{1}{2}\frac{l^2}{\Delta^2}\frac{f^2}{b^4}-\frac{1}{2}\frac{k^2}{\Delta^2}\frac{f^2}{c^4}.
\end{split}
\end{equation}
The conserved quantity \eqref{eqn: original conservation} is essentially equivalent to $g$ having zero scalar curvature. Note that the RHS of \eqref{eqn: original conservation} is the scalar curvature of $(N_{k,l}, g_{N_{k,l}}(t))$.

If $kl(k-l)=0$, then by our convention, we either have $(k,l)=(1,0)$ or $(k,l)=(1,1)$. In the first case, there are two equivalent isotropy summands $\mathfrak{t}^1$ in $\mathfrak{su}(3)/\mathfrak{u}_{1,0}$. In the latter case, the isotropy representation $\mathfrak{su}(3)/\mathfrak{u}_{1,1}$ consists of three trivial representations and two equivalent isotropy summands $\mathfrak{t}^3$. Therefore, $SU(3)$ invariant metrics on $N_{1,1}$ and $N_{1,0}$ are not necessarily diagonal.

 For $N_{1,0}$, the matrix representation of an $SU(3)$-invariant $g_{N_{1,0}}$ is 
\begin{equation}
\label{eqn: invariant metric N_1,0}
\begin{bmatrix}
a^2&0&&&A_1&A_2\\
0&a^2&&&-A_2&A_1\\
&&b^2&0&&&\\
&&0&b^2&&&\\
A_1&-A_2&&&c^2&0&\\
A_2&A_1&&&0&c^2&\\
&&&&&&f^2
\end{bmatrix}.
\end{equation}
From \cite{reidegeld_thesis} we learn that $N_{SU(3)}(U_{1,0}(1))=U(1)^2$. One can further reduce the number of functions from 6 to 5, since the residual action of $U(1)$ on $g_{N_{1,0}}$ changes $A_1$ and $A_2$ while leaving diagonal entries unchanged. The metric $g_{N_{1,0}}$ is $SU(3)\times U(1)$ invariant if and only if its diagonal. In other words, with the enhanced $SU(3)\times U(1)$ symmetry, the dynamic system is simply $\eqref{eqn: original Einstein equation}$ and $\eqref{eqn: original conservation}$ with $(k,l)=(1,0).$

For $N_{1,1}$, the matrix representation of an $SU(3)$-invariant $g_{N_{1,1}}$ is 
\begin{equation}
\label{eqn: invariant metric N_1,0}
\begin{bmatrix}
a_1^2&A_1&&&&&A_2\\
A_1&a_2^2&&&&&A_3\\
&&b^2&0&B_1&B_2&\\
&&0&b^2&-B_2&B_1&\\
&&B_1&-B_2&c^2&0&\\
&&B_2&B_1&0&c^2&\\
A_2&A_3&&&&&f^2
\end{bmatrix}.
\end{equation}
The normalizer $N_{SU(3)}(U_{1,1}(1))$ is isomorphic to $U(2)$ and $N_{SU(3)}(U_{1,1}(1))/ U_{1,1}(1)\cong SU(2)$. The residual action of $SU(2)$ acts as $SO(3)$ on the three trivial representations \cite{reidegeld_thesis}. Therefore, one can use the $SU(2)$ to partially diagonalize $g_{N_{1,1}}$ so that all $A_i$'s vanish and the number of functions is reduced from 10 to 7. A $SU(3)\times SU(2)$-invariant $g_{N_{1,1}}$ may still not be diagonal. Nevertheless, one can consider the case with diagonal $g_{N_{1,1}}$, as shown in $(4.10)$ in \cite{reidegeld_exceptional_2011}. In this article, we further impose the condition $a_1 = a_2$ and study \eqref{eqn: original Einstein equation} and \eqref{eqn: original conservation} with $(k,l)=(1,1)$. One can check that such a dynamic system is a subsystem of $(4.10)$ in \cite{reidegeld_exceptional_2011}. For the case where $a_1$ and $a_2$ are unequal, please see \cite{bazaikin_new_2007} for more details.

Singular orbits of cohomogeneity one space $M_{k,l}^{(k+l)}$, $M_{k,l}^{(l)}$ and $M_{k,l}^{(k)}$ are generated by $\mathfrak{t}^{2k+l}\oplus \mathfrak{t}^{k+2l}$, $\mathfrak{t}^{k-l}\oplus \mathfrak{t}^{k+2l}$ and $\mathfrak{t}^{k-l}\oplus \mathfrak{t}^{2k+l}$, respectively. Hence according to \cite{eschenburg_initial_2000} for the case $M_{k,l}^{(1)}$ and the power series in \cite{reidegeld_exceptional_2011} for the rest of the orbifolds, the initial conditions for theses three types of $\mathbb{R}^4/\mathbb{Z}_{|i|}$ bundles, are respectively given by 
\begin{equation}
\label{eqn: initial condtion}
\begin{split}
& \lim_{t\to 0}(a,b,c,f,\dot{a},\dot{b},\dot{c},\dot{f})=\left(0,a_0,a_0,0,1,0,0,\frac{2\Delta}{k+l}\right),\\
& \lim_{t\to 0}(a,b,c,f,\dot{a},\dot{b},\dot{c},\dot{f})=\left(b_0,0,b_0,0,0,1,0,\frac{2\Delta}{l}\right),\\
& \lim_{t\to 0}(a,b,c,f,\dot{a},\dot{b},\dot{c},\dot{f})=\left(c_0,c_0,0,0,0,0,1,\frac{2\Delta}{k}\right).
\end{split}
\end{equation}
For exceptional Aloff--Wallach spaces, recent development in \cite{verdiani_smoothness_2020} can also be applied to derive the initial condition even with equivalent isotropy summands.

The $\spin(7)$ condition is derived in \cite{reidegeld_exceptional_2011} as the following
\begin{equation}
\label{eqn: original spin(7) equation}
\begin{split}
\frac{\dot{a}}{a}&=\frac{b}{ac}+\frac{c}{ab}-\frac{a}{bc}-\frac{k+l}{2\Delta}\frac{f}{a^2}\\
\frac{\dot{b}}{b}&=\frac{c}{ab}+\frac{a}{bc}-\frac{b}{ac}+\frac{l}{2\Delta}\frac{f}{b^2}\\
\frac{\dot{c}}{c}&=\frac{a}{bc}+\frac{b}{ac}-\frac{c}{ab}+\frac{k}{2\Delta}\frac{f}{c^2}\\
\frac{\dot{f}}{f}&=\frac{k+l}{2\Delta}\frac{f}{a^2}-\frac{l}{2\Delta}\frac{f}{b^2}-\frac{k}{2\Delta}\frac{f}{c^2}
\end{split}.
\end{equation}
Change the sign of $f$ in \eqref{eqn: original spin(7) equation}. We then obtain the $\spin(7)$ condition with the opposite chirality:
\begin{equation}
\label{eqn: original opposite spin(7) equation}
\begin{split}
\frac{\dot{a}}{a}&=\frac{b}{ac}+\frac{c}{ab}-\frac{a}{bc}+\frac{k+l}{2\Delta}\frac{f}{a^2}\\
\frac{\dot{b}}{b}&=\frac{c}{ab}+\frac{a}{bc}-\frac{b}{ac}-\frac{l}{2\Delta}\frac{f}{b^2}\\
\frac{\dot{c}}{c}&=\frac{a}{bc}+\frac{b}{ac}-\frac{c}{ab}-\frac{k}{2\Delta}\frac{f}{c^2}\\
\frac{\dot{f}}{f}&=-\frac{k+l}{2\Delta}\frac{f}{a^2}+\frac{l}{2\Delta}\frac{f}{b^2}+\frac{k}{2\Delta}\frac{f}{c^2}
\end{split}.
\end{equation}

Although we mainly consider the construction of $\spin(7)$ metrics in this article, we start with the Ricci-flat system. As shown in the following, with a coordinate change, some important estimates can be quickly derived from the new Ricci-flat system while it is not obvious in the new $\spin(7)$ subsystem.

It is clear that 
$$L:=
\begin{bmatrix}
\frac{\dot{a}}{a}&0&&&&&\\
0&\frac{\dot{a}}{a}&&&&&&\\
&&\frac{\dot{b}}{b}&0&&&\\
&&0&\frac{\dot{b}}{b}&&&\\
&&&&\frac{\dot{c}}{c}&0&\\
&&&&0&\frac{\dot{c}}{c}&\\
&&&&&&\frac{\dot{f}}{f}\\
\end{bmatrix}
$$
is the second fundamental form of $N_{k,l}$ in $M^{(i)}_{k,l}$ at time $t$.
The quantity $\trace{L}$ in \eqref{eqn: original Einstein equation} is hence the mean curvature. In many works on the  construction of cohomogeneity one Einstein metrics, the coordinate change $d\eta=\trace{L} dt$ can help to simplify the original Einstein system\cite{dancer_non-kahler_2009}\cite{buzano_family_2015}\cite{chi_cohomogeneity_2019}. This case is no exception. Define functions
\begin{equation}
\label{eqn: Xi and Zi}
\begin{split}
&X_1=\frac{\frac{\dot{a}}{a}}{\trace{L}},\quad X_2=\frac{\frac{\dot{b}}{b}}{\trace{L}},\quad X_3=\frac{\frac{\dot{c}}{c}}{\trace{L}},\quad X_4=\frac{\frac{\dot{f}}{f}}{\trace{L}},\\
&Z_1=\frac{\frac{a}{bc}}{\trace{L}},\quad 
Z_2=\frac{\frac{b}{ac}}{\trace{L}},\quad 
Z_3=\frac{\frac{c}{ab}}{\trace{L}},\quad 
Z_4=f\trace{L}.
\end{split}
\end{equation}
And define functions
\begin{equation}
\begin{split}
&\mathcal{G}=2X_1^2+2X_2^2+2X_3^2+X_4^2\\
&\mathcal{R}_1=6Z_2Z_3+Z_1^2-Z_2^2-Z_3^2-\frac{1}{2}\frac{(k+l)^2}{\Delta^2}Z_2^2Z_3^2Z_4^2\\
&\mathcal{R}_2=6Z_1Z_3+Z_2^2-Z_3^2-Z_1^2-\frac{1}{2}\frac{l^2}{\Delta^2}Z_1^2Z_3^2Z_4^2\\
&\mathcal{R}_3=6Z_1Z_2+Z_3^2-Z_1^2-Z_2^2-\frac{1}{2}\frac{k^2}{\Delta^2}Z_1^2Z_2^2Z_4^2\\
&\mathcal{R}_4=\frac{1}{2}\frac{(k+l)^2}{\Delta^2}Z_2^2Z_3^2Z_4^2+\frac{1}{2}\frac{l^2}{\Delta^2}Z_1^2Z_3^2Z_4^2+\frac{1}{2}\frac{k^2}{\Delta^2}Z_1^2Z_2^2Z_4^2\\
&\mathcal{R}_s=2\mathcal{R}_1+2\mathcal{R}_2+2\mathcal{R}_3+\mathcal{R}_4
\end{split}
\end{equation}
Use $'$ to denote the derivative with respect to $\eta$. \eqref{eqn: original Einstein equation} is transformed to 
\begin{equation}
\label{eqn: new Einstein equation}
\begin{bmatrix}
X_1\\
X_2\\
X_3\\
X_4\\
Z_1\\
Z_2\\
Z_3\\
Z_4
\end{bmatrix}'
=
V(X_i,Z_i)\colon=\begin{bmatrix}
X_1(\mathcal{G}-1)+\mathcal{R}_1\\
X_2(\mathcal{G}-1)+\mathcal{R}_2\\
X_3(\mathcal{G}-1)+\mathcal{R}_3\\
X_4(\mathcal{G}-1)+\mathcal{R}_4\\
Z_1(\mathcal{G}+X_1-X_2-X_3)\\
Z_2(\mathcal{G}+X_2-X_3-X_1)\\
Z_3(\mathcal{G}+X_3-X_1-X_2)\\
Z_4(-\mathcal{G}+X_4)\\
\end{bmatrix}
\end{equation}
The conservation law \eqref{eqn: original conservation} becomes
\begin{equation}
\label{eqn: new conservation}
\mathcal{G}-1+\mathcal{R}_s=0.
\end{equation}
Note that $\left(\frac{1}{\trace(L)}\right)'=\frac{1}{\trace(L)}\mathcal{G}$. Therefore, $\frac{1}{\trace(L)}$ can be treated as function of $\eta$ by
$$\frac{1}{\trace(L)}=\exp\left(\int_{\eta^*}^{\eta}\mathcal{G}d\tilde{\eta}+C\right).$$
To recover the original coordinate, we simply compute
$$
t=\int_{\eta^*}^\eta \frac{1}{\trace(L)} d\tilde{\eta}=\int_{\eta^*}^\eta \exp\left(\int_{\eta^{**}}^{\eta^*}\mathcal{G}d\tilde{\tilde{\eta}}+C\right) d\tilde{\eta}+t_0
$$
and 
$$
a=\frac{1}{\trace(L)}\frac{1}{\sqrt{Z_2Z_3}},\quad b=\frac{1}{\trace(L)}\frac{1}{\sqrt{Z_1Z_3}},\quad  c=\frac{1}{\trace(L)}\frac{1}{\sqrt{Z_1Z_2}},\quad f=\frac{1}{\trace(L)}Z_4
$$

From the definition of $X_i$'s in \eqref{eqn: Xi and Zi}, one expects $2X_1+2X_2+2X_3+X_4=1$ is preserved by the new dynamic system. Indeed, since 
\begin{equation}
\label{eqn: H=1}
\begin{split}
(2X_1+2X_2+2X_3+X_4)'&=(2X_1+2X_2+2X_3+X_4)(\mathcal{G}-1)+\mathcal{R}_s\\
&=(2X_1+2X_2+2X_3+X_4)(\mathcal{G}-1)+1-\mathcal{G} \quad \text{by \eqref{eqn: new conservation}}\\
&=(2X_1+2X_2+2X_3+X_4-1)(\mathcal{G}-1)
\end{split},
\end{equation}
it is clear that $$\mathcal{H}:=\{2X_1+2X_2+2X_3+X_4=1\}$$ is invariant. It is worth mentioning that in many works on constructing cohomogeneity one steady Ricci solitons\cite{dancer_non-kahler_2009}\cite{buzano_family_2015}\cite{wink_cohomogeneity_2017}, coordinate change $d\eta= (-\dot{u}+\trace{L})dt$ is applied, where $u$ is the potential function. For our case, one obtains the same polynomial system as \eqref{eqn: new Einstein equation} with
$$
\mathcal{G}-1+\mathcal{R}_s\leq 0,
$$
where the equality is needed for \eqref{eqn: H=1} to hold. From this perspective, one can treat the invariance of $\mathcal{H}$ as the outcome of setting the potential of cohomogeneity one steady Ricci solitons to be a constant. From \eqref{eqn: new Einstein equation}, it is clear that we can assume $Z_i$'s be non-negative without loss of generality. In fact, the set $\{Z_1,Z_2,Z_3,Z_4\geq 0\}$ is invariant. A straightforward observation also gives the following proposition.
\begin{proposition}
\label{prop: X4 not negative}
The set $\{X_4\geq 0\}$ is invariant.
\end{proposition}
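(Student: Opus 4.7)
The plan is to exploit the structure of the $X_4$ equation in \eqref{eqn: new Einstein equation}, namely
$$X_4' = X_4(\mathcal{G}-1) + \mathcal{R}_4,$$
and observe that the inhomogeneous term $\mathcal{R}_4$ is manifestly non-negative. Indeed, inspecting the definition of $\mathcal{R}_4$, each of its three summands is a product of a positive constant and squares of real quantities $(k+l)Z_2 Z_3 Z_4/\Delta$, $l Z_1 Z_3 Z_4/\Delta$, $k Z_1 Z_2 Z_4/\Delta$. Hence $\mathcal{R}_4 \ge 0$ along every integral curve, regardless of signs of the $Z_i$'s.

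Given this, the $X_4$-equation implies the scalar differential inequality
$$X_4' \geq (\mathcal{G}-1)\, X_4.$$
Fix an initial time $\eta_0$ and set the integrating factor
$$\mu(\eta) := \exp\!\left(-\int_{\eta_0}^{\eta} (\mathcal{G}(\tilde{\eta})-1)\, d\tilde{\eta}\right),$$
which is strictly positive and well-defined as long as the solution exists. Then $(\mu X_4)' = \mu \mathcal{R}_4 \ge 0$, so $\mu X_4$ is non-decreasing in $\eta$. Consequently, if $X_4(\eta_0)\ge 0$, then $\mu(\eta) X_4(\eta) \ge \mu(\eta_0) X_4(\eta_0) \ge 0$ for all $\eta \ge \eta_0$, and division by $\mu(\eta)>0$ yields $X_4(\eta) \ge 0$ throughout the forward interval of existence. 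This establishes invariance of $\{X_4 \geq 0\}$.

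There is no real obstacle here; the argument is essentially a one-line tangency observation on the boundary $\{X_4=0\}$ combined with Gronwall. The only conceptual point worth noting is that no appeal to the conservation law \eqref{eqn: new conservation} or to the $Z_i \ge 0$ invariant set is needed, because $\mathcal{R}_4$ is a sum of squares in the $Z$-variables and is therefore non-negative unconditionally. The same scheme will presumably be invoked repeatedly in Section \ref{sec: global} to build compact invariant sets.
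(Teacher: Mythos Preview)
Your proof is correct and rests on the same key observation as the paper's, namely that $\mathcal{R}_4\ge 0$ unconditionally. The paper argues via the boundary tangency $\left.X_4'\right|_{X_4=0}=\mathcal{R}_4\ge 0$ and then separately rules out non-transverse crossings by noting that $\mathcal{R}_4=0$ forces the trajectory into an invariant set where $X_4\equiv 0$; your integrating-factor argument $(\mu X_4)'=\mu\,\mathcal{R}_4\ge 0$ packages the same content more cleanly and dispenses with that case analysis altogether.
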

\begin{proof}
It is clear that
\begin{equation}
\left.\langle\nabla X_4, V\rangle\right|_{X_4=0}=\mathcal{R}_4=\frac{1}{2}\frac{(k+l)^2}{\Delta^2}Z_2^2Z_3^2Z_4^2+\frac{1}{2}\frac{l^2}{\Delta^2}Z_1^2Z_3^2Z_4^2+\frac{1}{2}\frac{k^2}{\Delta^2}Z_1^2Z_2^2Z_4^2\geq 0.
\end{equation}
If non-transverse crossings emerge on an integral curve, then in addition to $X_4=0$ at the crossing point, either $Z_i=Z_j=0$ for distinct $i,j\in\{1,2,3\}$ or $Z_4=0$ at that point. Then such an integral curve must lie in either the invariant set $\{Z_i=Z_j=0\}$ or $\{Z_4=0\}$ and the condition $X_4=0$ is held along the integral curve. Hence we exclude the possibility of non-transverse crossings. The proof is complete.
\end{proof}
Therefore, cohomogeneity one Ricci-flat metrics with an Aloff--Wallach space $N_{k,l}$ as the principal orbit is represented by an integral curve to \eqref{eqn: new Einstein equation} on the following subset of $\mathbb{R}^8$:
\begin{equation}
\label{eqn: new conservation set}
\begin{split}
&\mathcal{C}_{RF}\\
&:=\left\{2X_1+2X_2+2X_3+X_4=1,\quad \mathcal{G}-1+\mathcal{R}_s=0,\quad X_4\geq 0,\quad  Z_1,Z_2,Z_3,Z_4\geq 0\right\},
\end{split}
\end{equation}
 a $6$-dimensional algebraic surface with boundary. By Lemma 5.1 in \cite{buzano_family_2015}, we know that $\lim\limits_{\eta\to \infty} t=\infty$. Therefore, if an integral curve is defined on $\mathbb{R}$, then the Ricci-flat metric represented is forward complete.

We now consider the $\spin(7)$ condition \eqref{eqn: original spin(7) equation} and \eqref{eqn: original opposite spin(7) equation} in the new coordinate.
The $\spin(7)$ condition form an invariant subset of $\mathcal{C}_{RF}$ with a lower dimension. Specifically, \eqref{eqn: original spin(7) equation} turns to 
\begin{equation}
\label{eqn: new spin(7) equation}
\begin{split}
F_1&:=X_1+Z_1-Z_2-Z_3+\frac{k+l}{2\Delta}Z_2Z_3Z_4=0\\
F_2&:=X_2+Z_2-Z_3-Z_1-\frac{l}{2\Delta}Z_1Z_3Z_4=0\\
F_3&:=X_3+Z_3-Z_1-Z_2-\frac{k}{2\Delta}Z_1Z_2Z_4=0\\
F_4&:=X_4-\frac{k+l}{2\Delta}Z_2Z_3Z_4+\frac{l}{2\Delta}Z_1Z_3Z_4+\frac{k}{2\Delta}Z_1Z_2Z_4=0
\end{split}
\end{equation}
It is known that $\spin(7)$ metrics are Ricci-flat. Hence \eqref{eqn: original spin(7) equation} is a first order subsystem of \eqref{eqn: original Einstein equation}. This is can also be shown using the new coordinates. Define $\mathcal{C}^+_{\spin(7)}:=\mathcal{C}_{RF}\cap \left(\bigcap\limits_{i=1}^4\left\{F_i= 0\right\}\right)$. We claim the following.
\begin{proposition}
\label{prop: invariatn spin(7)+}
$\mathcal{C}^+_{\spin(7)}$ is invariant.
\end{proposition}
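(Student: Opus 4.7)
The plan is to show that the polynomial vector field $V$ is tangent to the algebraic variety $\{F_1=F_2=F_3=F_4=0\}$ at every point of $\mathcal{C}^+_{\spin(7)}$. Since $\mathcal{C}_{RF}$ is already invariant under $V$, tangency to these four additional hypersurfaces suffices to conclude the invariance of $\mathcal{C}^+_{\spin(7)}$.

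The cleanest route is to work in the original coordinates. The first-order system \eqref{eqn: original spin(7) equation} is a first-order subsystem of the second-order Ricci-flat system \eqref{eqn: original Einstein equation}, because $\spin(7)$ holonomy implies Ricci-flatness; this is precisely why the terminology ``first-order subsystem'' is used. Given $p\in\mathcal{C}^+_{\spin(7)}$, I would choose any lift of $p$ to the original variables $(a,b,c,f,\dot a,\dot b,\dot c,\dot f)$ producing the required $(X_i,Z_i)$ at $\eta=\eta_0$; the overall scale of $\mathrm{tr}\,L$ is a free parameter and causes no obstruction. Integrating \eqref{eqn: original spin(7) equation} from that datum produces a curve that automatically solves \eqref{eqn: original Einstein equation}, and its image in the $(X_i,Z_i)$-coordinates is therefore an integral curve of $V$ through $p$ along which each $F_i$ vanishes identically. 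Uniqueness for the polynomial ODE \eqref{eqn: new Einstein equation} then forces the $V$-flow through $p$ to agree with this curve, so $\mathcal{C}^+_{\spin(7)}$ is preserved.

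Equivalently, one may verify tangency by direct computation, checking that $L_V F_i$ is a linear combination of $F_1,\dots,F_4$ modulo the defining relations of $\mathcal{C}_{RF}$. Splitting $F_i = X_i + P_i(Z_1,Z_2,Z_3,Z_4)$, the $X_i$-piece contributes $X_i(\mathcal{G}-1)+\mathcal{R}_i$, while $\langle\nabla P_i,V\rangle$ is easy to compute using the clean multiplicative laws $Z_j' = Z_j(\cdots)$; the anticipated identity is of the form $L_V F_i = (\mathcal{G}-1) F_i + \sum_j \mu_{ij} F_j$ with explicit polynomial coefficients $\mu_{ij}$ in the $Z$-variables.

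The main obstacle in either route is pure bookkeeping: one must verify that the cubic $Z_j Z_k Z_4$ terms produced by the curvature quantities $\mathcal{R}_i$ cancel exactly against those produced by differentiating the $Z$-products hidden in $P_i$. This cancellation is forced a priori by the $\spin(7)$-implies-Ricci-flat principle, so the calculation is guaranteed to succeed; I would include the direct computation only as a verification and rely on the uniqueness argument above for the actual proof.
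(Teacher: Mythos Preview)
Your proposal is correct and follows essentially the same two-pronged approach as the paper: first the conceptual argument that $\spin(7)$ metrics are Ricci-flat so the first-order condition becomes an algebraic constraint preserved by the flow, and second the direct verification that each $\langle\nabla F_i,V\rangle$ lies in the ideal generated by $F_1,\dots,F_4$ using the relation $2X_1+2X_2+2X_3+X_4=1$ on $\mathcal{C}_{RF}$. Your anticipated form $L_V F_i=(\mathcal{G}-1)F_i+\sum_j\mu_{ij}F_j$ with polynomial $\mu_{ij}$ in the $Z$-variables is exactly what the paper computes explicitly.
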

\begin{proof}
It is well-known that $\spin(7)$ metrics are Ricci-flat\cite{bonan_sur_1966}. With the coordinate change applied, the first order condition is transformed to algebraic equations in \eqref{eqn: new spin(7) equation}. Therefore, $\mathcal{C}^+_{\spin(7)}$ is an invariant set in $\mathcal{C}_{RF}$.

The statement can also be proven directly using \eqref{eqn: new Einstein equation},\eqref{eqn: new conservation} and definition of $\mathcal{C}_{RF}$.
Using the relation $2X_1+2X_2+2X_3+X_4=1$ in $\mathcal{C}_{RF}$, we have 
\begin{equation}
\begin{split}
\langle\nabla F_1, V\rangle&=F_1(\mathcal{G}-1)+\frac{k+l}{\Delta}Z_2Z_3Z_4(F_2+F_3+F_4)\\
&\quad +Z_1(3F_1+F_2+F_3+F_4)-Z_2(F_1+3F_2+F_3+F_4)-Z_3(F_1+F_2+3F_3+F_4)\\
\langle\nabla F_2, V\rangle&=F_2(\mathcal{G}-1)-\frac{l}{\Delta}Z_1Z_3Z_4(F_1+F_3+F_4)\\
&\quad -Z_1(3F_1+F_2+F_3+F_4)+Z_2(F_1+3F_2+F_3+F_4)-Z_3(F_1+F_2+3F_3+F_4)\\
\langle\nabla F_3, V\rangle&=F_3(\mathcal{G}-1)-\frac{k}{\Delta}Z_1Z_2Z_4(F_1+F_2+F_4)\\
&\quad -Z_1(3F_1+F_2+F_3+F_4)-Z_2(F_1+3F_2+F_3+F_4)+Z_3(F_1+F_2+3F_3+F_4)\\
\langle\nabla F_4, V\rangle&=F_4(\mathcal{G}-1)\\
&\quad -\frac{k+l}{\Delta}Z_2Z_3Z_4(F_2+F_3+F_4)+\frac{l}{\Delta}Z_1Z_3Z_4(F_1+F_3+F_4)+\frac{k}{\Delta}Z_1Z_2Z_4(F_1+F_2+F_4)
\end{split}.
\end{equation}
Hence the statement is proven.
\end{proof}

Replace $X_i$'s in \eqref{eqn: new conservation}, we obtain
$$
\left(2(Z_1+Z_2+Z_3)-\frac{k+l}{2\Delta}Z_2Z_3Z_4+\frac{l}{2\Delta}Z_1Z_3Z_4+\frac{k}{2\Delta}Z_1Z_2Z_4\right)^2=1.
$$
On the other hand, on $\mathcal{C}^+_{\spin(7)}$, we have 
\begin{equation}
\label{eqn: conservation Z spin(7)}
0=2F_1+2F_2+2F_3+F_4=1-2(Z_1+Z_2+Z_3)+\frac{k+l}{2\Delta}Z_2Z_3Z_4-\frac{l}{2\Delta}Z_1Z_3Z_4-\frac{k}{2\Delta}Z_1Z_2Z_4.
\end{equation}
Therefore, $\mathcal{C}^+_{\spin(7)}$ can also be expressed as 
the intersection
\begin{equation}
\begin{split}
\mathcal{C}^+_{\spin(7)}=&\mathcal{C}_{RF}\cap \left(\bigcap\limits_{i=1}^4\left\{F_i= 0\right\}\right)\\
&\cap \left\{2(Z_1+Z_2+Z_3)-\frac{k+l}{2\Delta}Z_2Z_3Z_4+\frac{l}{2\Delta}Z_1Z_3Z_4+\frac{k}{2\Delta}Z_1Z_2Z_4=1\right\}
\end{split}
\end{equation}
Cohomogeneity one $\spin(7)$ metrics with an Aloff--Wallach space $N_{k,l}$ as the principal orbit is represented by an integral curve to \eqref{eqn: new Einstein equation} restricted on $\mathcal{C}^+_{\spin(7)}\subset \mathcal{C}_{RF}$, a 3-dimensional algebraic surface in $\mathbb{R}^8$ with boundaries. In other words, the $\spin(7)$ metrics can be represented by integral curves to the vector field $\tilde{V}$ that consists of the last four entries of $V(X_i,Z_i)$, where $X_i$'s are polynomials defined as in \eqref{eqn: new spin(7) equation}. Such a dynamic system has a conservation law given by \eqref{eqn: conservation Z spin(7)}.

For the $\spin(7)$ condition with the opposite chirality, \eqref{eqn: original opposite spin(7) equation} becomes
\begin{equation}
\label{eqn: new opposite spin(7) equation}
\begin{split}
&H_1:=X_1+Z_1-Z_2-Z_3-\frac{k+l}{2\Delta}Z_2Z_3Z_4=0\\
&H_2:=X_2+Z_2-Z_3-Z_1+\frac{l}{2\Delta}Z_1Z_3Z_4=0\\
&H_3:=X_3+Z_3-Z_1-Z_2+\frac{k}{2\Delta}Z_1Z_2Z_4=0\\
&H_4:=X_4+\frac{k+l}{2\Delta}Z_2Z_3Z_4-\frac{l}{2\Delta}Z_1Z_3Z_4-\frac{k}{2\Delta}Z_1Z_2Z_4=0
\end{split},
\end{equation}
which is also a subsystem of \eqref{eqn: new Einstein equation}. Define $\mathcal{C}^-_{\spin(7)}$ to be
\begin{equation}
\label{eqn: new opposite spin(7) conservation}
\begin{split}
\mathcal{C}^-_{\spin(7)}=&\mathcal{C}_{RF}\cap\left(\bigcap\limits_{i=1}^4\left\{H_i= 0\right\}\right)\\
&\cap \left\{2(Z_1+Z_2+Z_3)+\frac{k+l}{2\Delta}Z_2Z_3Z_4-\frac{l}{2\Delta}Z_1Z_3Z_4-\frac{k}{2\Delta}Z_1Z_2Z_4=1\right\}
\end{split}.
\end{equation}
With the similar computation as the one in Proposition \ref{prop: invariatn spin(7)+}, we can also show that $\mathcal{C}_{\spin(7)}^-$ is invariant.

\begin{remark}
If we consider the $\spin(7)$ system with vector field $\tilde{V}$ on the 3-dimensional algebraic surface \eqref{eqn: conservation Z spin(7)} in $\mathbb{R}^4$, then the computation to prove Proposition \ref{prop: X4 not negative} becomes much more challenging. As $\mathcal{C}^\pm_{\spin(7)}$ are invariant subsets of $\mathcal{C}_{RF}$, the estimate $X_4\geq 0$ can be carried over to $\mathcal{C}_{\spin(7)}^\pm$.
\end{remark}

\begin{proposition}
\label{prop: X_4 and Zi}
On $\mathcal{C}^\pm_{\spin(7)}$, we have 
$X_4=2(Z_1+Z_2+Z_3)-1$.
\end{proposition}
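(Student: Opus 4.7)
The identity is a purely algebraic consequence of the defining equations of $\mathcal{C}^\pm_{\spin(7)}$, so I would derive it from the four relations $F_i = 0$ (resp.\ $H_i = 0$) combined with the mean-curvature normalization $2X_1+2X_2+2X_3+X_4 = 1$ defining $\mathcal{H}$. No dynamical input (i.e.\ no use of the flow) is needed.

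On $\mathcal{C}^+_{\spin(7)}$ the computation leading up to \eqref{eqn: conservation Z spin(7)} has already rewritten $2F_1+2F_2+2F_3+F_4$ as
\[
1-2(Z_1+Z_2+Z_3)+\tfrac{k+l}{2\Delta}Z_2Z_3Z_4-\tfrac{l}{2\Delta}Z_1Z_3Z_4-\tfrac{k}{2\Delta}Z_1Z_2Z_4,
\]
where the constant $1$ came from collecting the $X$-coefficients via the trace relation, and where the cubic $Z$-combination on the right is precisely the expression that $F_4=0$ identifies with $X_4$. Substituting $X_4$ for that cubic combination yields $0 = 1 - 2(Z_1+Z_2+Z_3) + X_4$, which rearranges to the claimed identity.

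For $\mathcal{C}^-_{\spin(7)}$ the argument is structurally identical: the signs of all cubic terms flip consistently in passing from the $F_i$'s to the $H_i$'s, so the linear combination $2H_1+2H_2+2H_3+H_4$ produces the mirror expression, and the substitution using $H_4=0$ again collapses the cubic piece into $X_4$, giving the same conclusion $X_4 = 2(Z_1+Z_2+Z_3)-1$. There is no genuine obstacle here; the proposition is essentially a book-keeping lemma encoding how the $\spin(7)$ algebraic constraints interact with the mean-curvature normalization, and once the above linear combination has been taken the rest is inspection.
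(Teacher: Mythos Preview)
Your proposal is correct and takes essentially the same algebraic route as the paper. The paper's version is marginally more direct: it sums $F_1+F_2+F_3+F_4=0$, in which the cubic $Z$-terms cancel outright to give $X_1+X_2+X_3+X_4=Z_1+Z_2+Z_3$, and then applies the trace relation $2X_1+2X_2+2X_3+X_4=1$---so no separate appeal to $F_4=0$ is needed.
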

\begin{proof}
Summing all equations in \eqref{eqn: new spin(7) equation}, we obtain
\begin{equation}
\label{eqn: X4 to Zi}
\frac{1-X_4}{2}+X_4=
X_1+X_2+X_3+X_4=Z_1+Z_2+Z_3
\end{equation}
Hence $X_4=2(Z_1+Z_2+Z_3)-1$ in $\mathcal{C}^+_{\spin(7)}$. Similar argument can prove the statement for $\mathcal{C}^-_{\spin(7)}$.
\end{proof}

There exists an invariant subset that lies in both $\mathcal{C}^+_{\spin(7)}$ and $\mathcal{C}^-_{\spin(7)}$. Define 
\begin{equation}
\begin{split}
\mathcal{C}_{G_2}=&\mathcal{C}^+_{\spin(7)}\cap \mathcal{C}^-_{\spin(7)}
\end{split}.
\end{equation}
Equivalently, we have $\mathcal{C}_{G_2}=\mathcal{C}^\pm_{\spin(7)} \cap \{X_4=0\}\cap \{Z_4=0\}$.
It is clear that $\mathcal{C}_{G_2}$ is a 2-dimensional invariant set. System \eqref{eqn: new Einstein equation} restricted on $\mathcal{C}_{G_2}$ is essentially the system for cohomogeneity one $G_2$ metric with $\mathbb{CP}^2$ as singular orbit and $SU(3)/T^2$ as principal orbit. For a forward complete ALC Ricci-flat metric, components $a,b$ and $c$ in \eqref{eqn: invariant metric} increase linearly and $f$ converges to a constant as $t\to \infty$. The integral curve that represents such a metric converges to the invariant set $\mathcal{C}_{RF}\cap \{X_4=0\}\cap \{Z_4=0\}$ as $\eta\to\infty$. Therefore, one can also think of $\mathcal{C}_{G_2}$ as a subset of the ``space of ALC asymptotics''. Such an invariant subset also occurs in the dynamic system in \cite{chi_einstein_2020}. By \cite{cleyton_cohomogeneity-one_2002}, all integral curves on $\mathcal{C}_{G_2}$ are explicitly known. More details are discussed in Section \ref{sec: asymp}.

\section{Local Existence}
\label{sec: critical points}
In this section, we compute linearizations of some important critical points of \eqref{eqn: new Einstein equation}. In particular, we compute linearization at some critical points that are in $\mathcal{C}^\pm_{\spin(7)}$ and recover the local existence of $\spin(7)$ metrics on the tubular neighborhood around $\mathbb{CP}^2$ as in \cite{reidegeld_exceptional_2011}.

Critical points of \eqref{eqn: new Einstein equation} on $\mathcal{C}_{RF}$ are the following.
\begin{enumerate}[I]
\item
$P_0^{(k+l)}:=\left(\frac{1}{3},0,0,\frac{1}{3},0,\frac{1}{3},\frac{1}{3},\frac{6\Delta}{k+l}\right),\quad P_0^{(l)}:=\left(0,\frac{1}{3},0,\frac{1}{3},\frac{1}{3},0,\frac{1}{3},\frac{6\Delta}{l}\right),\quad P_0^{(k)}:=\left(0,0,\frac{1}{3},\frac{1}{3},\frac{1}{3},\frac{1}{3},0,\frac{6\Delta}{k}\right)$.

These critical points represent the initial conditions \eqref{eqn: initial condtion} in the new coordinate. Integral curves that emanate from these points represent Ricci-flat metrics that are defined on the tubular neighborhood around $\mathbb{CP}^2$ in $M_{k,l}^{(k+l)}$, $M_{k,l}^{(l)}$ and  $M_{k,l}^{(k)}$, respectively.
\item
\begin{enumerate}
\item
$P_1:=\left(\frac{1}{6},\frac{1}{6},\frac{1}{6},0,\frac{1}{6},\frac{1}{6},\frac{1}{6},0\right)$
\item
$\left(\frac{1}{6},\frac{1}{6},\frac{1}{6},0,\frac{\sqrt{10}}{12},\frac{\sqrt{10}}{24},\frac{\sqrt{10}}{24},0\right),\quad \left(\frac{1}{6},\frac{1}{6},\frac{1}{6},0,\frac{\sqrt{10}}{24},\frac{\sqrt{10}}{12},\frac{\sqrt{10}}{24},0\right),\quad \left(\frac{1}{6},\frac{1}{6},\frac{1}{6},0,\frac{\sqrt{10}}{24},\frac{\sqrt{10}}{24},\frac{\sqrt{10}}{12},0\right)$
\end{enumerate}

All these critical points lie in the space of ALC asymptotics $\mathcal{C}_{RF}\cap\{X_4=0,Z_4=0\}$. Moreover, the point $P_1$ lies in $\mathcal{C}_{G_2}$. If an integral curve converges to one of these critical point, then the metric represented has an ALC asymptotics with space at infinity as $\mathbb{S}^1$ bundle over the cone over the homogeneous Einstein metric on $SU(3)/T^2$. If the curve converges to $P_1$, then the asymptotic cone in the ALC asymptotics has $G_2$ holonomy.
\item
$\left(\frac{1}{7},\frac{1}{7},\frac{1}{7},\frac{1}{7},z_1,z_2,z_3,z_4\right)$, where $\mathcal{R}_i(z_1,z_2,z_3,z_4)=\frac{6}{49}$ for each $i$.

The $z_i$'s in these critical points give the solution of homogeneous Einstein metrics on $N_{k,l}$. One need to solve a quartic polynomial to get the explicit value. By a proper coordinate change, one can conclude that for each $N_{k,l}$, there are exactly two real solutions\cite{kowalski_homogeneous_1993}. In particular, for $N_{1,1}$, we have 
$$
P_{AC-1}:=\left(\frac{1}{7},\frac{1}{7},\frac{1}{7},\frac{1}{7},\frac{2}{7},\frac{1}{7},\frac{1}{7},21\right),\quad 
P_{AC-2}:=\left(\frac{1}{7},\frac{1}{7},\frac{1}{7},\frac{1}{7},\frac{2}{21},\frac{5}{21},\frac{5}{21},\frac{63}{5}\right).
$$
If an integral curve converges to one of these critical points, then the metric represented has $AC$ limit as cone over homogeneous Einstein metrics on $N_{k,l}$. Since the homogeneous Einstein metrics on $N_{k,l}$ has nearly parallel $G_2$ structure, the metric cone has its holonomy group contained in $\spin(7)$.

\item
$\left(\frac{1}{2},-\frac{1}{2},-\frac{1}{2},0,\frac{\sqrt{5}}{2},0,0,0\right),\quad \left(-\frac{1}{2},\frac{1}{2},-\frac{1}{2},0,0,\frac{\sqrt{5}}{2},0,0\right),\quad \left(-\frac{1}{2},-\frac{1}{2},\frac{1}{2},0,0,0,\frac{\sqrt{5}}{2},0\right)$.

These critical points lie in $\mathcal{C}_{G_2}$. They are sources in the restricted system on $\mathcal{C}_{G_2}$. Singular $G_2$ metrics in \cite{cleyton_cohomogeneity-one_2002} are represented by integral curves that emanate from these points.
\item
$\left(\frac{1}{2},0,0,0,0,\frac{1}{4},\frac{1}{4},0\right),\quad  \left(0,\frac{1}{2},0,0,\frac{1}{4},0,\frac{1}{4},0\right),\quad \left(0,0,\frac{1}{2},0,\frac{1}{4},\frac{1}{4},0,0\right)$.

These critical points also lie in $\mathcal{C}_{G_2}$. They are saddles in the restricted system on $\mathcal{C}_{G_2}$. The smooth $G_2$ metrics in \cite{bryant_construction_1989} and \cite{gibbons_einstein_1990} are represented by an integral curves that emanates from these points.
\item
$(x_1,x_2,x_3,x_4,0,0,0,0)$, where $2x_1+2x_2+2x_3+x_4=1$ and $2x_1^2+2x_2^2+2x_3^2+x_4^2=1$.
\item
$(0,0,0,1,0,0,0,z_4)$, where $z_4\geq 0$.
\end{enumerate}

\begin{remark}
One can easily verify that critical points listed above are all on $\mathcal{C}_{RF}$ by counting their non-vanishing $Z_i$ entries. For example, if $Z_1$ does not vanish on a critical point, then one immediately learn that $\mathcal{G}=X_2+X_3-X_1$ from the vector field in \eqref{eqn: new Einstein equation} and can exhaust all the possibilities. Recall that $Z_i$'s are non-negative in $\mathcal{C}_{RF}$ hence we do not need to list any critical point with negative $Z_i$ entries.
\end{remark}

The linearization at $P_0^{(k+l)}$ is 
\begin{equation}
\mathcal{L}\left(P_0^{(k+l)}\right)=\begin{bmatrix}
-\frac{2}{9}&0&0&\frac{2}{9}&0&0&0&-\frac{2}{27}\frac{k+l}{\Delta}\\
0&-\frac{2}{3}&0&0&2&\frac{2}{3}&-\frac{2}{3}&0\\
0&0&-\frac{2}{3}&0&2&-\frac{2}{3}&\frac{2}{3}&0\\
\frac{4}{9}&0&0&-\frac{4}{9}&0&\frac{4}{3}&\frac{4}{3}&\frac{2}{27}\frac{k+l}{\Delta}\\
0&0&0&0&\frac{2}{3}&0&0&0\\
\frac{1}{9}&\frac{1}{3}&-\frac{1}{3}&\frac{2}{9}&0&0&0&0\\
\frac{1}{9}&-\frac{1}{3}&\frac{1}{3}&\frac{2}{9}&0&0&0&0\\
-8\frac{\Delta}{k+l}&0&0&2\frac{\Delta}{k+l}&0&0&0&0
\end{bmatrix}
\end{equation}
Eigenvalues and eigenvectors of $\mathcal{L}\left(P_0^{(k+l)}\right)$ are 
$$
\lambda_1=\lambda_2=\lambda_3=\lambda_4=\frac{2}{3},\quad \lambda_5=\lambda_6=-\frac{2}{3},\quad \lambda_7=\lambda_8=-\frac{4}{3}
$$
$$
v_1=\begin{bmatrix}
2\\
0\\
0\\
-4\\
0\\
-1\\
-1\\
-36\frac{\Delta}{k+l}
\end{bmatrix},v_2=\begin{bmatrix}
-3(k+l)\\
4k+5l\\
5k+4l\\
-12(k+l)\\
3(k+l)\\
-5k-4l\\
-4k-5l\\
0
\end{bmatrix},v_3=\begin{bmatrix}
0\\
1\\
-1\\
0\\
0\\
1\\
-1\\
0
\end{bmatrix},v_4=\begin{bmatrix}
0\\
3\\
3\\
0\\
2\\
0\\
0\\
0
\end{bmatrix}, v_5=\begin{bmatrix}
4\\
-3\\
-3\\
4\\
0\\
-2\\
-2\\
36\frac{\Delta}{k+l}
\end{bmatrix}, v_6=\begin{bmatrix}
0\\
1\\
1\\
0\\
0\\
0\\
0\\
0
\end{bmatrix},v_7=\begin{bmatrix}
0\\
2\\
-2\\
0\\
0\\
-1\\
1\\
0
\end{bmatrix},v_8=\begin{bmatrix}
2\\
-1\\
1\\
-4\\
0\\
1\\
0\\
18\frac{\Delta}{k+l}
\end{bmatrix}.
$$
Except $v_4$ and $v_6$, all the other eigenvectors are tangent to $\mathcal{C}_{RF}$. $v_1, v_2$ and $v_7$ are tangent to $\mathcal{C}^+_{\spin(7)}$. For each choice of parameter $(s_1,s_2,s_3)\in\mathbb{R}^3$, we fix $s_1^2+s_2^2+s_3^2=1$ to cut down the redundancy. We have the linearized solution
\begin{equation}
\label{eqn: linearized solution RF}
P_0^{(k+l)}+s_1 e^{\frac{2\eta}{3}}v_1+s_2 e^{\frac{2\eta}{3}}v_2+s_3 e^{\frac{2\eta}{3}}v_3
\end{equation}
for the Ricci-flat system and 
\begin{equation}
\label{eqn: linearized solution k+l spin(7)}
P_0^{(k+l)}+s_1e^{\frac{2\eta}{3}}v_1+s_2 e^{\frac{2\eta}{3}}v_2
\end{equation}
for the $\spin(7)$ subsystem.
By the Hartman--Grobman Theorem, there is a 1 to 1 correspondence between each choice of $(s_1,s_2,s_3)\in\mathbb{S}^2$ and an actual integral curve that emanates from $P_0^{(k+l)}$. By the unstable version of Theorem 4.5 in \cite{coddington_theory_1955}, there is no ambiguity to use $\gamma^{(k+l)}_{(s_1,s_2,s_3)}$ to denote the integral curve that emanate from $P_0^{(k+l)}$ with 
$$
\gamma^{(k+l)}_{(s_1,s_2,s_3)}=P_0^{(k+l)}+s_1 e^{\frac{2\eta}{3}}v_1+s_2 e^{\frac{2\eta}{3}}v_2+s_3 e^{\frac{2\eta}{3}}v_3+O(e^{\left(\frac{2}{3}+\delta\right)\eta}).
$$
We hence abuse the notation by using $\gamma^{(k+l)}_{(s_1,s_2,s_3)}$ to denote the Ricci-flat metric that is defined on the tubular neighborhood around $\mathbb{CP}^2$ in $M_{k,l}^{(k+l)}$. Similar notation is carried over for $\gamma^{(k+l)}_{(s_1,s_2)}:=\gamma^{(k+l)}_{(s_1,s_2,0)}$, a locally defined $\spin(7)$ metric.

The linearization at $P_0^{(k)}$ is 
\begin{equation}
\mathcal{L}\left(P_0^{(k)}\right)=\begin{bmatrix}
-\frac{2}{3}&0&0&0&\frac{2}{3}&-\frac{2}{3}&2&0\\
0&\frac{-2}{3}&0&0&-\frac{2}{3}&\frac{2}{3}&2&0\\
0&0&-\frac{2}{9}&\frac{2}{9}&0&0&0&-\frac{2}{27}\frac{k}{\Delta}\\
0&0&\frac{4}{9}&-\frac{4}{9}&\frac{4}{3}&\frac{4}{3}&0&\frac{2}{27}\frac{k}{\Delta}\\
\frac{1}{3}&-\frac{1}{3}&\frac{1}{9}&\frac{2}{9}&0&0&0&0\\
-\frac{1}{3}&\frac{1}{3}&\frac{1}{9}&\frac{2}{9}&0&0&0&0\\
0&0&0&0&0&0&\frac{2}{3}&0\\
0&0&-8\frac{\Delta}{k}&2\frac{\Delta}{k}&0&0&0&0
\end{bmatrix}
\end{equation}
Eigenvalues and eigenvectors of $\mathcal{L}\left(P_0^{(k)}\right)$ are 
$$
\lambda_1=\lambda_2=\lambda_3=\lambda_4=\frac{2}{3},\quad \lambda_5=\lambda_6=-\frac{2}{3},\quad \lambda_7=\lambda_8=-\frac{4}{3}
$$
$$
v_1=\begin{bmatrix}
0\\
0\\
2\\
-4\\
-1\\
-1\\
0\\
-36\frac{\Delta}{k}
\end{bmatrix},v_2=\begin{bmatrix}
5k+l\\
4k-l\\
-3k\\
-12k\\
-4k+l\\
-5k-l\\
3k\\
0
\end{bmatrix},v_3=\begin{bmatrix}
-1\\
1\\
0\\
0\\
-1\\
1\\
0\\
0
\end{bmatrix},v_4=\begin{bmatrix}
3\\
3\\
0\\
0\\
0\\
2\\
0\\
0
\end{bmatrix}, v_5=\begin{bmatrix}
0\\
0\\
2\\
2\\
-1\\
-1\\
0\\
18\frac{\Delta}{k}
\end{bmatrix}, v_6=\begin{bmatrix}
1\\
1\\
0\\
0\\
0\\
0\\
0\\
0
\end{bmatrix},v_7=\begin{bmatrix}
2\\
-2\\
0\\
0\\
-1\\
1\\
0\\
0
\end{bmatrix},v_8=\begin{bmatrix}
-1\\
1\\
2\\
-4\\
2\\
0\\
0\\
18\frac{\Delta}{k}
\end{bmatrix}.
$$
The first three eigenvectors are tangent to $\mathcal{C}_{RF}$ and the first two eigenvectors are tangent to $\mathcal{C}^-_{\spin(7)}$. We use $\gamma_{(s_1,s_2,s_3)}^{(k)}$ to denote integral curve such that
$$\gamma_{(s_1,s_2,s_3)}^{(k)}=P_0^{(k)}+s_1 e^{\frac{2\eta}{3}}v_1+s_2 e^{\frac{2\eta}{3}}v_2+s_3 e^{\frac{2\eta}{3}}v_3+O(e^{\left(\frac{2}{3}+\delta\right)\eta})$$
near $P_0^{(k)}$. Hence $\gamma_{(s_1,s_2,s_3)}^{(k)}$ and $\gamma_{(s_1,s_2)}^{(k)}:=\gamma_{(s_1,s_2,0)}^{(k)}$ are respectively locally defined Ricci-flat metrics and $\spin(7)$ metrics on the tubular neighborhood around $\mathbb{CP}^2$ in $M_{k,l}^{(k)}$

The analysis of linearizations at $P_0^{(l)}$ is similar. For the linearization at $P_0^{(l)}$, one find three unstable eigenvectors that are tangent to $\mathcal{C}_{RF}$, and two among these three vectors are tangent to $\mathcal{C}^-_{\spin(7)}$. In summary, we prove the following lemma.
\begin{lemma}
\label{lem: local}
Let $i\in\{k+l,k,l\}$.
There is a continuous 2-parameter family of Ricci-flat metrics on the tubular neighborhood around $\mathbb{CP}^2$ on each $M_{k,l}^{(i)}$, represented by integral curves that emanates from $P_0^{(i)}$. If $s_3=0$, $\gamma^{(i)}_{(s_1,s_2)}:=\gamma^{(i)}_{(s_1,s_2,0)}$ gives rise to the locally defined $\spin(7)$ metric on $M_{k,l}^{(i)}$. $\gamma^{(k)}_{(s_1,s_2)}$ and $\gamma^{(l)}_{(s_1,s_2)}$ share the same chirality that is opposite to the one of $\gamma^{(k+l)}_{(s_1,s_2)}$.
\end{lemma}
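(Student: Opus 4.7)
The plan is to treat all three cases $i\in\{k+l,k,l\}$ uniformly by performing the linearization of \eqref{eqn: new Einstein equation} at $P_0^{(i)}$, extracting a three-dimensional unstable eigenspace inside $T_{P_0^{(i)}}\mathcal{C}_{RF}$, and invoking the unstable version of Theorem~4.5 of \cite{coddington_theory_1955} to produce the claimed integral curves. The linearizations at $P_0^{(k+l)}$ and $P_0^{(k)}$ are already displayed in the excerpt; the one at $P_0^{(l)}$ is an analogous mechanical computation. In each case I expect the same structural picture: a fourfold eigenvalue $\tfrac{2}{3}$ whose eigenspace contains a three-dimensional subspace tangent to $\mathcal{C}_{RF}$, spanned by the listed $v_1,v_2,v_3$, among which $v_1$ and $v_2$ are also tangent to the appropriate $\mathcal{C}^\pm_{\spin(7)}$. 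For each $(s_1,s_2,s_3)\in\mathbb{S}^2$ the unstable manifold theorem then supplies a unique integral curve with leading behavior
$$P_0^{(i)} + e^{2\eta/3}\sum_{j=1}^{3} s_j v_j + O\bigl(e^{(2/3+\delta)\eta}\bigr),$$
which I label $\gamma^{(i)}_{(s_1,s_2,s_3)}$. The normalization $s_1^2+s_2^2+s_3^2=1$ kills the rescaling freedom equivalent to time translation in $\eta$, explaining why a genuinely $\mathbb{S}^2$-worth---i.e. 2-parameter---family is produced.

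Next, I would pass back to the original metric coordinates using $a=\trace(L)^{-1}(Z_2Z_3)^{-1/2}$ and its analogues, and match the leading behavior of $\gamma^{(i)}_{(s_1,s_2,s_3)}$ against the smoothness initial conditions \eqref{eqn: initial condtion}. The three unstable directions correspond precisely to the free parameters in the smooth power-series expansion of a cohomogeneity one Ricci-flat metric around $\mathbb{CP}^2 \subset M_{k,l}^{(i)}$, as derived in \cite{reidegeld_exceptional_2011} and, for the exceptional cases, in \cite{verdiani_smoothness_2020}. Hence each $\gamma^{(i)}_{(s_1,s_2,s_3)}$ extends to a smooth Ricci-flat metric on a tubular neighborhood of the singular orbit.

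For the $\spin(7)$ subfamily, I would observe that each $P_0^{(i)}$ satisfies the algebraic constraints of either \eqref{eqn: new spin(7) equation} or \eqref{eqn: new opposite spin(7) equation}: a direct substitution places $P_0^{(k+l)}$ on $\mathcal{C}^+_{\spin(7)}$ and both $P_0^{(k)}$ and $P_0^{(l)}$ on $\mathcal{C}^-_{\spin(7)}$. Since $v_1$ and $v_2$ at each $P_0^{(i)}$ are tangent to the corresponding invariant subset, Proposition~\ref{prop: invariatn spin(7)+} (and its chirality mirror, proved by the identical computation) forces the $s_3=0$ subcurve $\gamma^{(i)}_{(s_1,s_2)}$ to remain in that subset for all $\eta$, so it represents a $\spin(7)$ metric of the stated chirality. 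I expect the only real obstacle to be bookkeeping: carrying out the $P_0^{(l)}$ linearization in full and confirming that its two $\spin(7)$-tangent unstable eigenvectors lie in $\mathcal{C}^-_{\spin(7)}$ rather than $\mathcal{C}^+_{\spin(7)}$, which is what fixes the opposite-chirality assertion of the lemma.
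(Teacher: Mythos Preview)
Your proposal is correct and follows essentially the same approach as the paper: the lemma there is stated as a summary of the preceding linearization computations at $P_0^{(k+l)}$ and $P_0^{(k)}$, together with the remark that the analysis at $P_0^{(l)}$ is similar, invoking Hartman--Grobman and the unstable version of Theorem~4.5 in \cite{coddington_theory_1955} exactly as you describe. The only minor difference is that you propose to verify the smoothness matching by passing back to the original coordinates, whereas the paper handles this step by citing the power series in \cite{reidegeld_exceptional_2011} and the initial conditions \eqref{eqn: initial condtion}.
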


The linearization at $P_1$ is 
\begin{equation}
\mathcal{L}\left(P_1\right)=\begin{bmatrix}
-\frac{13}{18}&\frac{1}{9}&\frac{1}{9}&0&\frac{1}{3}&\frac{2}{3}&\frac{2}{3}&0\\
\frac{1}{9}&-\frac{13}{18}&\frac{1}{9}&0&\frac{2}{3}&\frac{1}{3}&\frac{2}{3}&0\\
\frac{1}{9}&\frac{1}{9}&-\frac{13}{18}&0&\frac{2}{3}&\frac{2}{3}&\frac{1}{3}&0\\
0&0&0&-\frac{5}{6}&0&0&0&0\\
\frac{5}{18}&-\frac{1}{18}&-\frac{1}{18}&0&0&0&0&0\\
-\frac{1}{18}&\frac{5}{18}&-\frac{1}{18}&0&0&0&0&0\\
-\frac{1}{18}&-\frac{1}{18}&\frac{5}{18}&0&0&0&0&0\\
0&0&0&0&0&0&0&-\frac{1}{6}
\end{bmatrix}
\end{equation}
Eigenvalues and eigenvectors of $\mathcal{L}\left(P_1\right)$ are 
$$
\lambda_1=\lambda_2=\lambda_3=-\frac{1}{6},\quad \lambda_4=\lambda_5=-\frac{5}{6},\quad \lambda_6=\lambda_7=-\frac{2}{3},\quad \lambda_8=\frac{1}{3}
$$
$$
v_1=\begin{bmatrix}
0\\
0\\
0\\
0\\
0\\
0\\
0\\
1
\end{bmatrix},v_2=\begin{bmatrix}
2\\
-1\\
-1\\
0\\
-4\\
2\\
2\\
0
\end{bmatrix},v_3=\begin{bmatrix}
0\\
-1\\
1\\
0\\
0\\
2\\
-2\\
0
\end{bmatrix},v_4=\begin{bmatrix}
0\\
0\\
0\\
1\\
0\\
0\\
0\\
0
\end{bmatrix},v_5=\begin{bmatrix}
-5\\
-5\\
-5\\
0\\
1\\
1\\
1\\
0
\end{bmatrix},v_6=\begin{bmatrix}
4\\
-2\\
-2\\
0\\
-2\\
1\\
1\\
0
\end{bmatrix},v_7=\begin{bmatrix}
0\\
2\\
-2\\
0\\
0\\
-1\\
1\\
0
\end{bmatrix},v_8=\begin{bmatrix}
2\\
2\\
2\\
0\\
1\\
1\\
1\\
0
\end{bmatrix}
$$
It is clear that except $v_5$ and $v_8$, all the other eigenvectors are tangent to $\mathcal{C}_{RF}$. Hence $P_1$ is a sink in the system \eqref{eqn: new Einstein equation} restricted on $\mathcal{C}_{RF}$. Therefore, the critical point is also a sink in the subsystem restricted on $\mathcal{C}^\pm_{\spin(7)}$. Recall that $P_1$ is in $\mathcal{C}_{G_2}$, the 2-dimensional invariant set of ALC asymptotics. If a $\gamma_{(s_1,s_2)}^{(i)}$ gets sufficiently closed to $P_1$, then it converges to $P_1$ and the metric represented has ALC asymptotics. 

\section{Global Existence and Asymptotics}
\label{sec: global}
With the assumption $k\geq l\geq 0$, we prove that $\spin(7)$ metrics $\gamma^{(k+l)}_{(s_1,s_2)}$ and $\gamma^{(k)}_{(s_1,s_2)}$ in Lemma \ref{lem: local} are globally defined for $(s_1,s_2)$ in an open set of $\mathbb{S}^1$. Due to the limitation of our method, it is not clear if we have global existence of $\gamma^{(l)}_{(s_1,s_2)}$. More details are discussed at Section \ref{subsec: Tk Tl}.

\subsection{Global Metrics on $M_{k,l}^{(k+l)}$}
Define 
\begin{equation}
\mathcal{S}=\mathcal{C}^+_{\spin(7)}\cap \left\{0\leq Z_4\leq \frac{6\Delta}{k+l}\right\}
\end{equation}
\begin{proposition}
\label{prop: S is compact}
The set $\mathcal{S}$ consists of two connected component. The component that satisfies $Z_2+Z_3\leq \frac{2}{3}$ is compact.
\end{proposition}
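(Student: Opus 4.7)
The plan is to reduce the problem to boundedness in the $Z$-coordinates alone, since on $\mathcal{C}^+_{\spin(7)}$ the relations $F_i = 0$ express each $X_i$ as a polynomial in $(Z_1, Z_2, Z_3, Z_4)$, and $Z_4$ is already bounded in $\mathcal{S}$ by definition. The key identity I will exploit comes from equating Proposition \ref{prop: X_4 and Zi} (which gives $X_4 = 2(Z_1+Z_2+Z_3) - 1$) with the equation $F_4 = 0$:
\[
2(Z_1+Z_2+Z_3) - 1 = \frac{Z_4}{2\Delta}\bigl[(k+l)Z_2 Z_3 - l Z_1 Z_3 - k Z_1 Z_2\bigr].
\]

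Next I turn this into a scalar inequality in $u := Z_2 + Z_3$. Using $Z_1 \geq 0$ to discard the non-positive terms on the right, the AM-GM bound $Z_2 Z_3 \leq \tfrac{1}{4} u^2$, and the upper bound $Z_4 \leq \tfrac{6\Delta}{k+l}$, the right-hand side is at most $\tfrac{3}{4} u^2$. Using $Z_1 \geq 0$ once more to drop it on the left gives $2u - 1 \leq \tfrac{3}{4} u^2$, equivalently $(3u - 2)(u - 2) \geq 0$. Hence every point of $\mathcal{S}$ satisfies $u \leq \tfrac{2}{3}$ or $u \geq 2$, which exhibits $\mathcal{S}$ as a disjoint union of two closed subsets $\mathcal{S}_-$ and $\mathcal{S}_+$. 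I will verify both are nonempty by pointing to $P_0^{(k+l)} \in \mathcal{S}_-$ (where $u = \tfrac{2}{3}$) and by checking that $(Z_1, Z_2, Z_3, Z_4) = \bigl(0, 1, 1, \tfrac{6\Delta}{k+l}\bigr)$ extends via the $F_i = 0$ relations to a valid element of $\mathcal{S}_+$.

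For the compactness of $\mathcal{S}_-$, closedness is automatic as an intersection of closed sets. For boundedness, $Z_2, Z_3 \in [0, \tfrac{2}{3}]$ and $Z_4 \in [0, \tfrac{6\Delta}{k+l}]$ are immediate. To bound $Z_1$, I rewrite the displayed identity as $2Z_1 = 1 + X_4 - 2u$ and use $0 \leq X_4 \leq \tfrac{3}{4} u^2 \leq \tfrac{1}{3}$ on $\mathcal{S}_-$ to conclude $Z_1 \leq \tfrac{2}{3}$. The $X_i$'s are then bounded since $F_i = 0$ expresses them as polynomials in bounded variables.

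The delicate remaining step, needed to promote the decomposition to \emph{exactly} two components, is the connectedness of each of $\mathcal{S}_\pm$. I expect to handle this by solving the displayed identity explicitly for $Z_1$ as a rational function of $(Z_2, Z_3, Z_4)$; the denominator $2 + \tfrac{Z_4}{2\Delta}(lZ_3 + kZ_2)$ is positive throughout $\mathcal{S}$, so this realizes each $\mathcal{S}_\pm$ as the graph of a continuous function over a subset of the $(Z_2, Z_3, Z_4)$-space, reducing connectedness to a direct inspection of the image regions. This is the main obstacle, but the dichotomy inequality above already forces the parameter region to split cleanly into an $\{u \leq \tfrac{2}{3}\}$ piece and a $\{u \geq 2\}$ piece, so each image region is readily checked to be path-connected.
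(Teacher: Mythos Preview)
Your argument is essentially the paper's: both extract the same conservation identity (your display is exactly \eqref{eqn: conservation Z spin(7)} rearranged), bound the right-hand side by $\tfrac{3}{4}u^2$ via AM--GM and $Z_4 \le \tfrac{6\Delta}{k+l}$, and read off the dichotomy $u \le \tfrac{2}{3}$ or $u \ge 2$ for $u=Z_2+Z_3$.

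Two small differences are worth flagging. First, the paper obtains the sharper bound $Z_1 \le \tfrac{1}{2}$ on the compact piece, by estimating the conservation identity a second time rather than routing through $X_4$; this sharper bound matters downstream, since Proposition~\ref{each slice is contained} and Lemma~\ref{lem: invariant set} parametrize $Z_1 = \alpha \in [0,\tfrac{1}{2}]$, so you should upgrade your $Z_1\le\tfrac{2}{3}$ accordingly. Second, the paper does not actually prove connectedness of either piece---it only establishes the split and the compactness of $\check{\mathcal{S}}$, which is all that is used later---so your graph-over-$(Z_2,Z_3,Z_4)$ argument is a genuine addition rather than a recovered step.
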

\begin{proof}
By \eqref{eqn: conservation Z spin(7)}, we have 
\begin{equation}
\begin{split}
0&=1-2(Z_1+Z_2+Z_3)+\left(\frac{k+l}{2\Delta}Z_2Z_3-\frac{l}{2\Delta}Z_1Z_3-\frac{k}{2\Delta}Z_1Z_2\right)Z_4\\
&\leq 1-2(Z_2+Z_3)+3Z_2Z_3\quad \text{since $Z_4\leq\frac{6\Delta}{k+l}$ and $Z_i\geq 0$}\\
&\leq 1-2(Z_2+Z_3)+\frac{3}{4}(Z_2+Z_3)^2\\
&=\left(\frac{3}{2}(Z_2+Z_3)-1\right)\left(\frac{1}{2}(Z_2+Z_3)-1\right)
\end{split}.
\end{equation}
Hence either $Z_2+Z_3\leq \frac{2}{3}$ or $Z_2+Z_3\geq 2$ in $\mathcal{S}$.
Consider the component that satisfies $Z_2+Z_3\leq \frac{2}{3}$. As for $Z_1$, we have 
\begin{equation}
\begin{split}
0&=1-2(Z_1+Z_2+Z_3)+\left(\frac{k+l}{2\Delta}Z_2Z_3-\frac{l}{2\Delta}Z_1Z_3-\frac{k}{2\Delta}Z_1Z_2\right)Z_4\\
&\leq 1-2Z_1-2(Z_2+Z_3)+3Z_2Z_3 \quad \text{since $Z_4\leq\frac{6\Delta}{k+l}$ and $Z_i\geq 0$}\\
&\leq 1-2Z_1-2(Z_2+Z_3)+\frac{3}{2}(Z_2+Z_3)^2\\
&\leq 1-2Z_1 \quad \text{since $Z_2+Z_3\leq\frac{2}{3}$}
\end{split}.
\end{equation}
Hence all $Z_1\leq \frac{1}{2}$ and all $Z_i$'s are bounded above. By  \eqref{eqn: new spin(7) equation}, it is clear that all $X_i$'s are bounded. Hence the component is compact.
\end{proof}

By Proposition \ref{prop: S is compact}, we can write $\mathcal{S}=\check{\mathcal{S}}\sqcup \hat{\mathcal{S}}$, where $\check{\mathcal{S}}$ denote the compact component 
$$\mathcal{S}_{\spin(7)}\cap \left\{0\leq Z_2+Z_3\leq \frac{2}{3}\right\}.$$
We focus on $\check{\mathcal{S}}$ in the following since $P_0^{(k+l)}, P_1\in \check{\mathcal{S}}$. We show that $\check{\mathcal{S}}$ is invariant so that these integral curves stay in the compact set $\check{\mathcal{S}}$. Note that 
\begin{equation}
\left.\langle\nabla Z_4, V\rangle\right|_{Z_4=\frac{6\Delta}{k+l}}=Z_4(-\mathcal{G}+X_4)=-\frac{6\Delta}{k+l}(\mathcal{G}-X_4).
\end{equation}
To show that $\check{\mathcal{S}}$ is invariant, it suffices to prove that 
\begin{equation}
\label{eqn: S in G-X4>0}
\check{\mathcal{S}}= \check{\mathcal{S}}\cap \{\mathcal{G}-X_4\geq 0\},
\end{equation}
as demonstrated in Figure \ref{fig: mathcalS}.

\begin{figure}[h!] 
\centering
\begin{subfigure}{.3\textwidth}
  \centering 
  \includegraphics[clip,trim=12cm 3cm 8cm 3cm,width=1\linewidth]{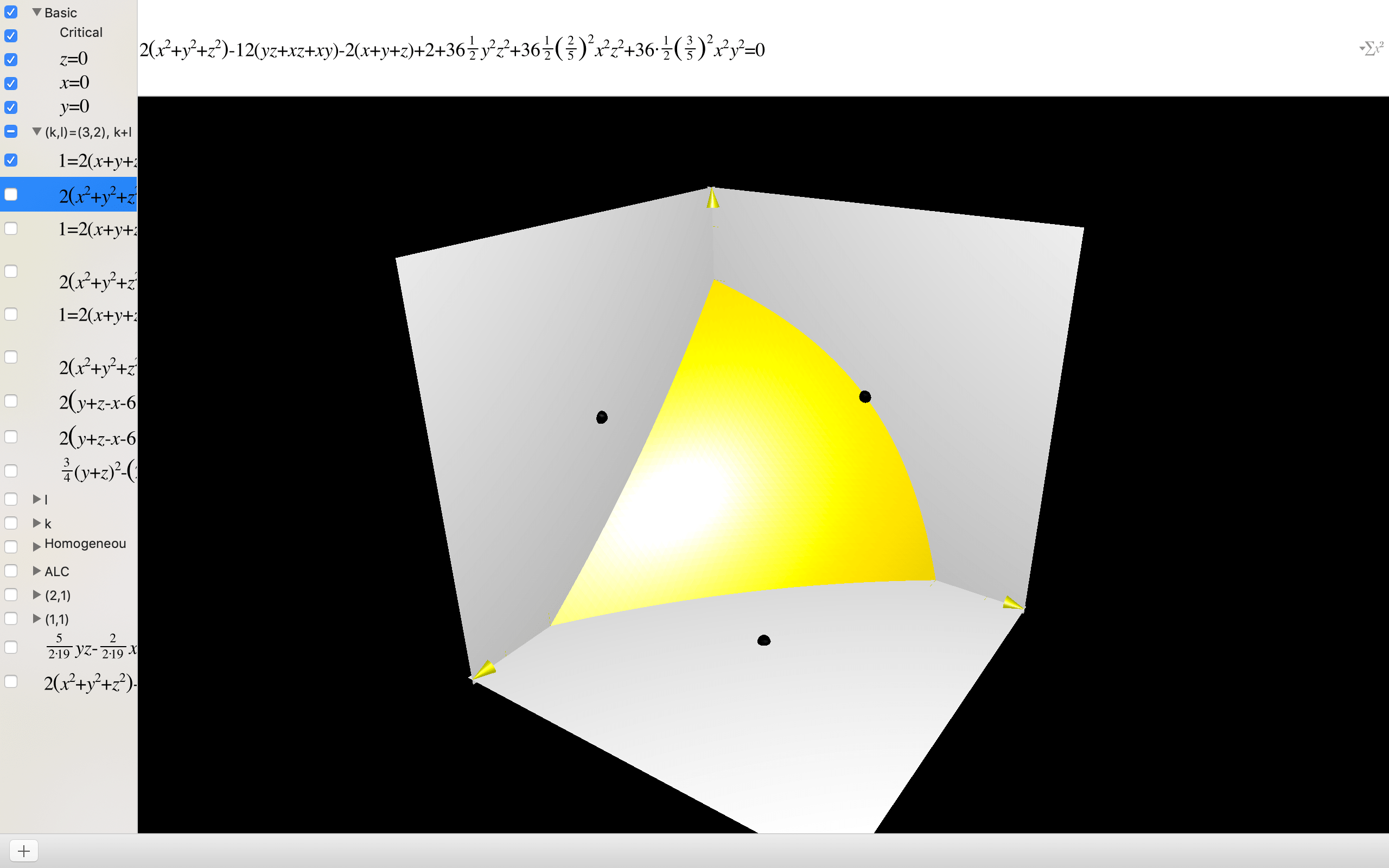}
\end{subfigure}
\begin{subfigure}{.3\textwidth}
  \centering
  \includegraphics[clip,trim=12cm 3cm 8cm 3cm,width=1\linewidth]{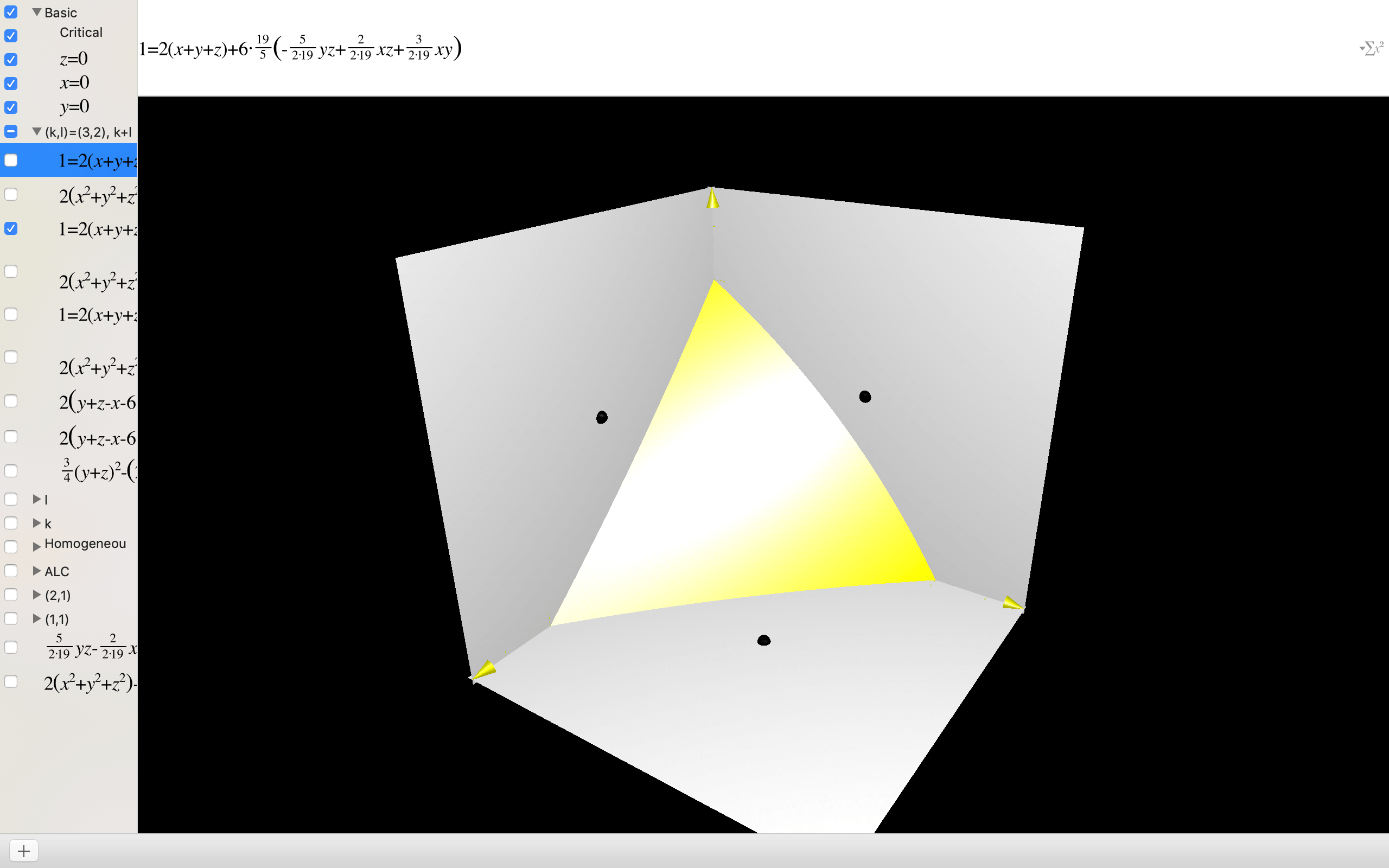}
\end{subfigure}
\begin{subfigure}{.3\textwidth}
  \centering
  \includegraphics[clip,trim=12cm 3cm 8cm 3cm,width=1\linewidth]{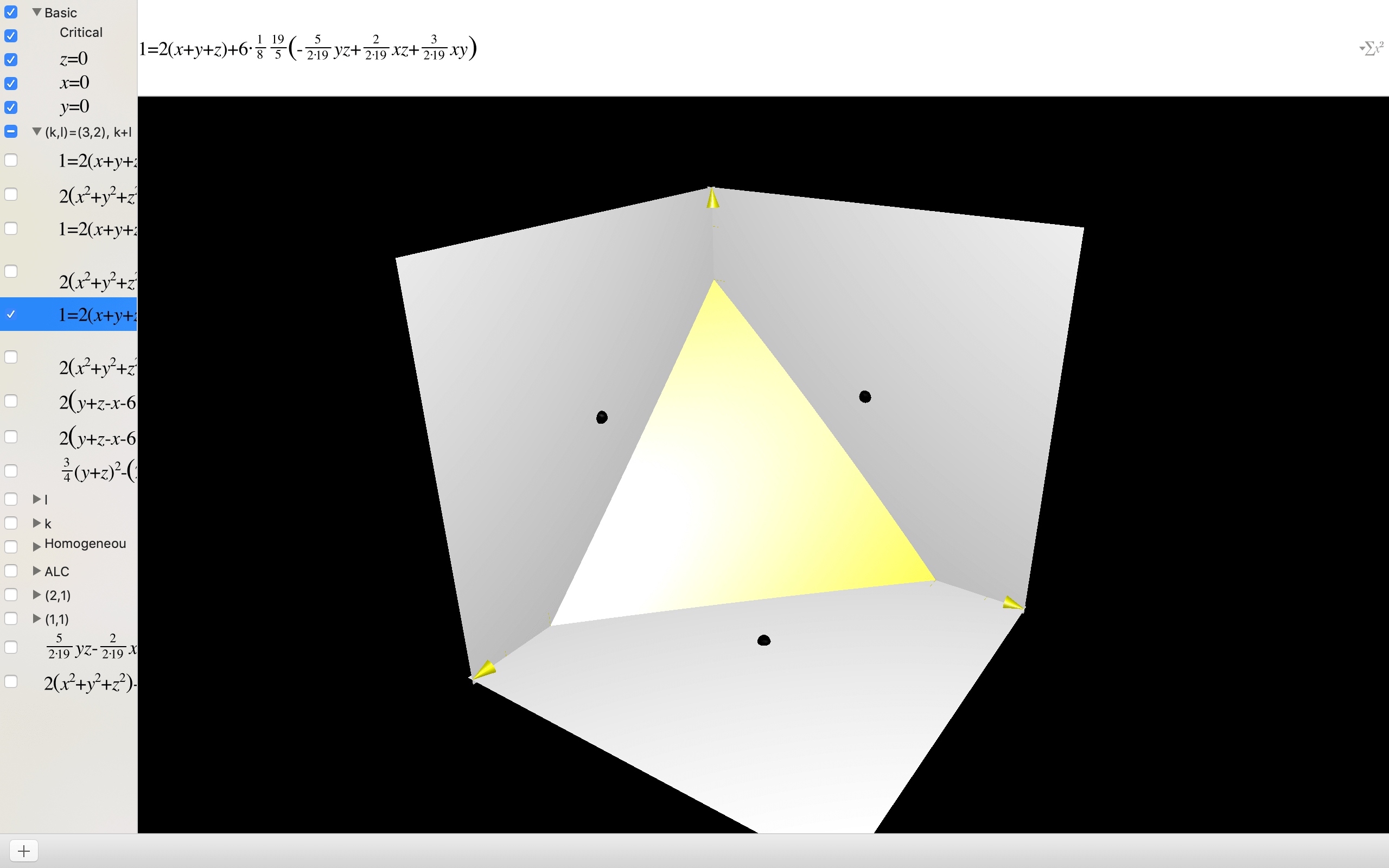}
\end{subfigure}\\
\begin{subfigure}{.3\textwidth}
  \centering
  \includegraphics[clip,trim=12cm 3cm 8cm 3cm,width=1\linewidth]{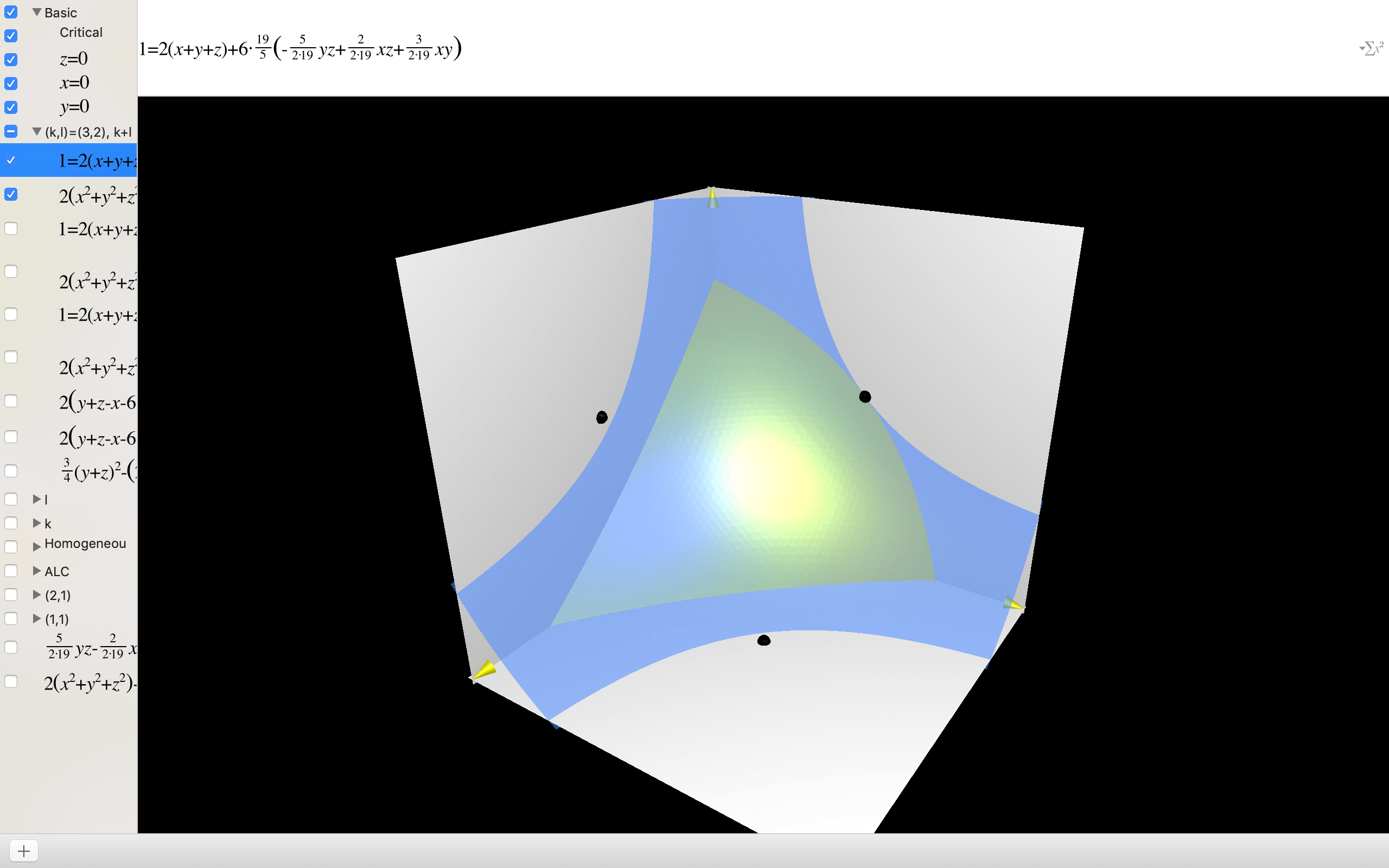}
  \caption{$Z_4=\frac{6\Delta}{k+l}$}
\end{subfigure}
\begin{subfigure}{.3\textwidth}
  \centering
  \includegraphics[clip,trim=12cm 3cm 8cm 3cm,width=1\linewidth]{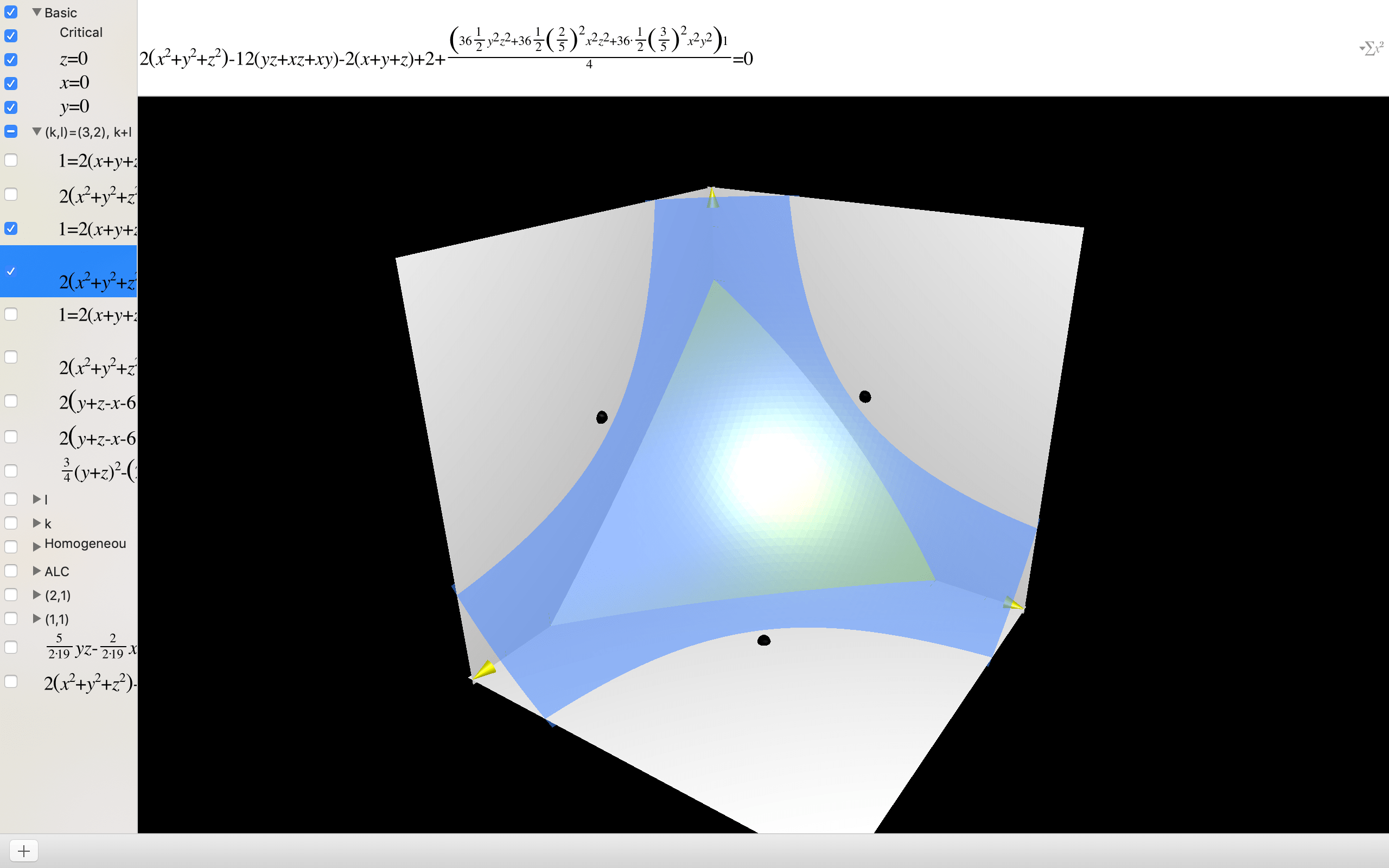}
  \caption{$Z_4=\frac{1}{2}\frac{6\Delta}{k+l}$}
\end{subfigure}
\begin{subfigure}{.3\textwidth}
  \centering
  \includegraphics[clip,trim=12cm 3cm 8cm 3cm,width=1\linewidth]{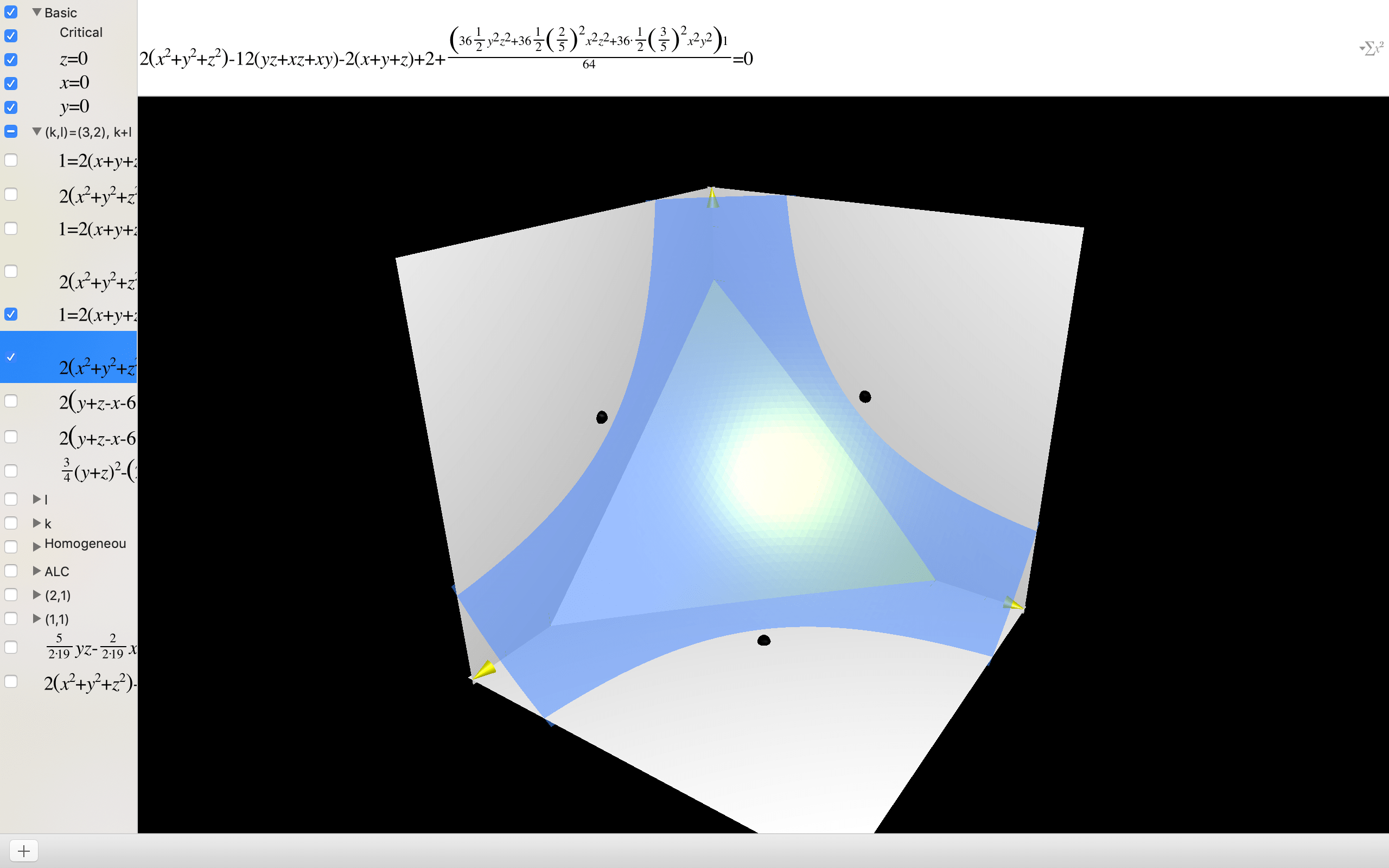}
  \caption{$Z_4=\frac{1}{8}\frac{6\Delta}{k+l}$}
\end{subfigure}
\caption{The figures above are ``picture proof'' $\check{\mathcal{S}}\subset \check{\mathcal{S}}\cap \{\mathcal{G}-X_4\geq 0\}$, with $(k,l)=(3,2)$.
Figures in the first row demonstrate  the compact set $\check{\mathcal{S}}$ for fixed $Z_4\leq \frac{6\Delta}{k+l}$. Figures in the second row demonstrate the set $\check{\mathcal{S}}\cap \{\mathcal{G}-X_4=0\}$ for fixed $Z_4\leq \frac{6\Delta}{k+l}$, with all $X_1$, $X_2$ and $X_3$ replaced by $Z_i$'s using \eqref{eqn: new spin(7) equation} and apply Proposition \ref{prop: X_4 and Zi} for $X_4$. Points shown are $P_0^{(k+l)}$, $P_0^{(k)}$ and $P_0^{(l)}$. Note that $P_0^{(k+l)}$ lies in the boundary of $\check{\mathcal{S}}$ as shown in the first figure. From the observation, surfaces that represent $\check{\mathcal{S}}$ are ``behind'' those that represent $\check{\mathcal{S}}\cap \{\mathcal{G}-X_4=0\}$, meaning $\mathcal{G}-X_4\geq 0$ is satisfied on each point on $\check{\mathcal{S}}$. In that way, we can tell that $Z_4$ is monotonic decreasing if it decreases initially.}
\label{fig: mathcalS}
\end{figure}

The $\spin(7)$ condition \eqref{eqn: new spin(7) equation} plays a key role in proving \eqref{eqn: S in G-X4>0}. Specifically, we simplify the inequalities by replacing all $X_i$'s with $Z_i$'s. Then the question boils down to showing the separation of two algebraic surfaces in $\mathbb{R}^4$. In order to prove the separation, we find algebraic surfaces that are `` between'' the one given by \eqref{eqn: conservation Z spin(7)} and the one from $\check{\mathcal{S}}\cap \{\mathcal{G}-X_4=0\}$. These so-called ``separation surfaces'' are symmetric with respect to the hyperplane $Z_2=Z_3$. In this way, slices of these surfaces with fixed $Z_1$ and $Z_4$ are easier to be proven to be separated.

Define 
\begin{equation}
\begin{split}
&\mathcal{Q}(Z_1,Z_2,Z_3,Z_4)\\
&=\frac{k+l}{8\Delta}Z_4(Z_2+Z_3)^2-\left(2+\frac{k+l}{4\Delta}Z_1Z_4\right)(Z_2+Z_3) +\frac{k+l}{8\Delta}Z_1^2Z_4-2Z_1+1.
\end{split}
\end{equation} 
\begin{proposition}
\label{prop: included in Q}
Each points in $\check{\mathcal{S}}$ satisfies $\mathcal{Q}\geq 0$, i.e., $\check{\mathcal{S}}= \check{\mathcal{S}}\cap \{\mathcal{Q}\geq 0\}.$
\end{proposition}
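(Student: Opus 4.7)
The plan is to use the conservation law \eqref{eqn: conservation Z spin(7)} to eliminate the affine constant $+1$ appearing in $\mathcal{Q}$, and then to recognise what remains as a manifestly non-negative quadratic form in $(Z_1,\,Z_2-Z_3)$ with coefficient $Z_4/(8\Delta)$. Since $\check{\mathcal{S}}\subset \mathcal{C}^+_{\spin(7)}$ the conservation law is available at every point of $\check{\mathcal{S}}$, and no dynamical information beyond $Z_4\ge 0$ is required.

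Concretely, I would first solve \eqref{eqn: conservation Z spin(7)} for $1$ and substitute into $\mathcal{Q}$. The $2(Z_1+Z_2+Z_3)$ produced this way cancels exactly against the explicit linear terms $-2(Z_2+Z_3)-2Z_1$ in $\mathcal{Q}$, so every surviving monomial carries a factor of $Z_4$. After pulling out $Z_4/(8\Delta)$, I would simplify using $(Z_2+Z_3)^2-4Z_2Z_3=(Z_2-Z_3)^2$ together with the bookkeeping identity
\[
-2(k+l)Z_1(Z_2+Z_3)+4kZ_1Z_2+4lZ_1Z_3 \;=\; 2(k-l)Z_1(Z_2-Z_3),
\]
which collects the remaining cross terms into the compact form
\[
\mathcal{Q} \;=\; \frac{Z_4}{8\Delta}\Bigl[(k+l)(Z_2-Z_3)^2 + 2(k-l)Z_1(Z_2-Z_3) + (k+l)Z_1^2\Bigr].
\]

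To finish, since $k+l>0$ under our standing conventions, I would complete the square in $(Z_2-Z_3)$ and use $(k+l)^2-(k-l)^2=4kl$ to rewrite the bracket as
\[
(k+l)\left(Z_2-Z_3+\frac{k-l}{k+l}Z_1\right)^{\!2} + \frac{4kl}{k+l}\,Z_1^2,
\]
a sum of two squares with non-negative coefficients (the second term simply drops out in the exceptional case $l=0$). Combined with $Z_4\ge 0$ on $\check{\mathcal{S}}$, this yields $\mathcal{Q}\ge 0$ pointwise. I do not anticipate any real obstacle: the entire proof is a polynomial identity, and the only conceptual point is that the $\spin(7)$ conservation law is exactly what converts $\mathcal{Q}$ into a sum of squares, which presumably explains why this particular quadratic was singled out as a \emph{separation surface} en route to establishing invariance of $\check{\mathcal{S}}$.
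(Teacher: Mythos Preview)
Your proof is correct and is essentially the same argument as the paper's. Both use the conservation law \eqref{eqn: conservation Z spin(7)} to reduce $\mathcal{Q}$ to a quadratic form in $(Z_2-Z_3,Z_1)$ with prefactor $Z_4/(8\Delta)$ and then complete the square, obtaining exactly the decomposition
\[
\mathcal{Q}=\frac{k+l}{8\Delta}Z_4\left((Z_3-Z_2)-\tfrac{k-l}{k+l}Z_1\right)^{2}+\frac{4kl}{8\Delta(k+l)}Z_1^2Z_4\ge 0.
\]
The paper presents this as a two-step chain of inequalities (first dropping the negative square, then relaxing the coefficient $\bigl(\tfrac{k-l}{k+l}\bigr)^2\le 1$ on $Z_1^2Z_4$), whereas you write the sum-of-squares identity directly; the underlying algebra is identical.
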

\begin{proof}
By \eqref{eqn: conservation Z spin(7)}, we have 
\begin{equation}
\begin{split}
0&=1-2(Z_1+Z_2+Z_3)+\frac{k+l}{2\Delta}Z_2Z_3Z_4-\frac{l}{2\Delta}Z_1Z_3Z_4-\frac{k}{2\Delta}Z_1Z_2Z_4\\
&=\frac{k+l}{8\Delta}Z_4(Z_2+Z_3)^2-\left(2+\frac{k+l}{4\Delta}Z_1Z_4\right)(Z_2+Z_3)-\frac{k+l}{8\Delta}Z_4\left((Z_3-Z_2)-\frac{k-l}{k+l}Z_1\right)^2\\
&\quad +\left(\frac{k-l}{k+l}\right)^2\frac{k+l}{8\Delta}Z_1^2Z_4-2Z_1+1\\
&\leq \frac{k+l}{8\Delta}Z_4(Z_2+Z_3)^2-\left(2+\frac{k+l}{4\Delta}Z_1Z_4\right)(Z_2+Z_3) +\left(\frac{k-l}{k+l}\right)^2\frac{k+l}{8\Delta}Z_1^2Z_4-2Z_1+1\\
&\leq \frac{k+l}{8\Delta}Z_4(Z_2+Z_3)^2-\left(2+\frac{k+l}{4\Delta}Z_1Z_4\right)(Z_2+Z_3) +\frac{k+l}{8\Delta}Z_1^2Z_4-2Z_1+1\\
&=\mathcal{Q}
\end{split}.
\end{equation}
Hence $\mathcal{Q}\geq 0$ in $\check{\mathcal{S}}$. The proof is complete.
\end{proof}
Define 
\begin{equation}
\begin{split}
&\mathcal{A}(Z_1,Z_2,Z_3,Z_4)\\
&=\frac{1}{32}\left(\frac{k+l}{\Delta}\right)^2(Z_2+Z_3)^4Z_4^2-2(Z_2+Z_3)^2-(12Z_1+2)(Z_2+Z_3)+2Z_1^2-2Z_1+2.
\end{split}
\end{equation} 
\begin{proposition}
\label{prop: A included in G-X4>0}
We have the following inclusion
$\check{\mathcal{S}}\cap \{\mathcal{A}\geq 0\}\subset \check{\mathcal{S}}\cap \{\mathcal{G}-X_4\geq 0\}.$
\end{proposition}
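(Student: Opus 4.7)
The plan is to establish the pointwise inequality $\mathcal{G}-X_4\geq \mathcal{A}$ on $\check{\mathcal{S}}$, from which the desired inclusion follows immediately. First I would use the $\spin(7)$ relations \eqref{eqn: new spin(7) equation} to write each $X_i=A_i+B_i$ for $i=1,2,3$, where $A_i$ is the linear piece in $Z_1,Z_2,Z_3$ and $B_i$ is the cubic piece proportional to $Z_4$ (with respective coefficients $-(k+l)$, $l$, $k$ over $2\Delta$). Together with Proposition~\ref{prop: X_4 and Zi}, which gives $X_4=2(Z_1+Z_2+Z_3)-1$, this converts $\mathcal{G}-X_4 = 2X_1^2+2X_2^2+2X_3^2+X_4^2-X_4$ into a pure polynomial in $Z_1,Z_2,Z_3,Z_4$, allowing a direct comparison with $\mathcal{A}$.

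Next I would split the resulting expression by degree in $Z_4$. The $Z_4^0$ piece comes from $2\sum A_i^2 + X_4^2 - X_4$, the $Z_4^1$ piece from the cross-terms $4\sum A_iB_i$, and the $Z_4^2$ piece from $2\sum B_i^2$. Each piece is then compared with the corresponding part of $\mathcal{A}$, which is symmetric under $Z_2\leftrightarrow Z_3$. For the $Z_4^2$ term I would apply AM--GM in the form $Z_2^2Z_3^2\leq \frac{1}{16}(Z_2+Z_3)^4$ together with the bounds $k,l\leq k+l$, so that $2B_1^2$ alone dominates the $\frac{1}{32}\bigl(\frac{k+l}{\Delta}\bigr)^2(Z_2+Z_3)^4Z_4^2$ term in $\mathcal{A}$, while $2B_2^2+2B_3^2$ remains as a nonnegative slack. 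For the $Z_4^0$ piece, expanding $\sum A_i^2$ and symmetrizing in $Z_2,Z_3$ should reproduce the $-2(Z_2+Z_3)^2-(12Z_1+2)(Z_2+Z_3)+2Z_1^2-2Z_1+2$ portion of $\mathcal{A}$ up to a nonnegative multiple of $(Z_2-Z_3)^2$ together with the quadratic contribution of $X_4^2-X_4$ expressed via $X_4=2(Z_1+Z_2+Z_3)-1$.

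The main obstacle will be the $Z_4^1$ cross-term $4\sum A_iB_i$, a cubic in the $Z_i$'s of mixed signs that has no direct analogue in $\mathcal{A}$. My plan is to absorb it by completing the square against the $Z_4^2$ piece, using the identity $2\sum(A_i+B_i)^2 = 2\sum A_i^2+4\sum A_iB_i+2\sum B_i^2$ to regroup the cross-term with the $B_i^2$'s; the residual should then be controllable via the constraints of $\check{\mathcal{S}}$, namely $0\leq Z_4\leq \frac{6\Delta}{k+l}$, the bound $Z_2+Z_3\leq \frac{2}{3}$ from Proposition~\ref{prop: S is compact}, and if necessary a nonnegative multiple of $\mathcal{Q}$ from Proposition~\ref{prop: included in Q}. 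Given the author's own remark that such manipulations are ``the very technical computation'' relegated to the Appendix, I expect the decisive step to be the identification of an explicit sum-of-squares decomposition for $\mathcal{G}-X_4-\mathcal{A}$ rather than any conceptually new input.
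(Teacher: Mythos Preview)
Your plan has a real gap: by expanding $\mathcal{G}-X_4=2X_1^2+2X_2^2+2X_3^2+X_4^2-X_4$ directly with $X_i=A_i+B_i$ you manufacture a $Z_4$-linear cross term $4\sum A_iB_i$ that is entirely avoidable, and your suggestion to ``absorb it by completing the square against the $Z_4^2$ piece'' is circular (it just undoes the $A_i+B_i$ split). The missing idea is to invoke the conservation law \eqref{eqn: new conservation} first: on $\mathcal{C}_{RF}$ one has $\mathcal{G}-X_4=1-\mathcal{R}_s-X_4$, and after substituting $X_4=2(Z_1+Z_2+Z_3)-1$ from Proposition~\ref{prop: X_4 and Zi} this is a polynomial in the $Z_i$'s containing only $Z_4^0$ and $Z_4^2$ terms---no linear piece at all. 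Concretely,
\[
\mathcal{G}-X_4=2(Z_1^2+Z_2^2+Z_3^2)-12(Z_1Z_2+Z_1Z_3+Z_2Z_3)-2(Z_1+Z_2+Z_3)+2+\mathcal{R}_4,
\]
with $\mathcal{R}_4\geq \tfrac12\bigl(\tfrac{k+l}{\Delta}\bigr)^2Z_2^2Z_3^2Z_4^2$.

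From this point the argument is not an SOS decomposition as you anticipate, but a short chain of inequalities: symmetrize in $Z_2,Z_3$, rewrite $Z_2^2Z_3^2=\tfrac{1}{16}\bigl((Z_2+Z_3)^2-(Z_2-Z_3)^2\bigr)^2$, and observe that the $(Z_2-Z_3)^2$ remainder carries the factor $4-\tfrac{1}{16}\bigl(\tfrac{k+l}{\Delta}\bigr)^2(Z_2+Z_3)^2Z_4^2$, which is nonnegative on $\check{\mathcal{S}}$ precisely because of the two defining bounds $Z_4\leq \tfrac{6\Delta}{k+l}$ and $Z_2+Z_3\leq \tfrac{2}{3}$. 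Dropping that nonnegative remainder yields exactly $\mathcal{A}$. Neither $\mathcal{Q}$ nor any Appendix computation is needed here; the ``very technical computation'' you allude to belongs to Proposition~\ref{each slice is contained}, not to this step.
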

\begin{proof}
By \eqref{eqn: new conservation}, we have
\begin{equation}
\mathcal{G}-X_4=1-\mathcal{R}_s-X_4.
\end{equation}
By Proposition \ref{prop: X_4 and Zi}, the computation continues as
\begin{equation}
\label{eqn: G-X4}
\begin{split}
&\mathcal{G}-X_4\\
&=2(Z_1^2+Z_2^2+Z_3^2)-12(Z_2Z_3+Z_1Z_3+Z_1Z_2)-2(Z_1+Z_2+Z_3)+2.
\\
&\quad +\frac{1}{2}\frac{(k+l)^2}{\Delta^2}Z_2^2Z_3^2Z_4^2+\frac{1}{2}\frac{l^2}{\Delta^2}Z_1^2Z_3^2Z_4^2+\frac{1}{2}\frac{k^2}{\Delta^2}Z_1^2Z_2^2Z_4^2\\
&\geq 2(Z_1^2+Z_2^2+Z_3^2)-12(Z_2Z_3+Z_1Z_3+Z_1Z_2)-2(Z_1+Z_2+Z_3)+2.
\\
&\quad +\frac{1}{2}\frac{(k+l)^2}{\Delta^2}Z_2^2Z_3^2Z_4^2\\
&=\frac{1}{32}\left(\frac{k+l}{\Delta}\right)^2(Z_2+Z_3)^4Z_4^2-2(Z_2+Z_3)^2-(12Z_1+2)(Z_2+Z_3)+2Z_1^2-2Z_1+2\\
&\quad +(Z_3-Z_2)^2\left(\frac{1}{32}\left(\frac{k+l}{\Delta}\right)^2
(Z_3-Z_2)^2Z_4^2-\frac{1}{16}\left(\frac{k+l}{\Delta}\right)^2(Z_2+Z_3)^2Z_4^2+4\right)\\
&\geq \frac{1}{32}\left(\frac{k+l}{\Delta}\right)^2(Z_2+Z_3)^4Z_4^2-2(Z_2+Z_3)^2-(12Z_1+2)(Z_2+Z_3)+2Z_1^2-2Z_1+2\\
&\quad +(Z_3-Z_2)^2\left(4-\frac{1}{16}\left(\frac{k+l}{\Delta}\right)^2(Z_2+Z_3)^2Z_4^2\right)\\
&\geq \frac{1}{32}\left(\frac{k+l}{\Delta}\right)^2(Z_2+Z_3)^4Z_4^2-2(Z_2+Z_3)^2-(12Z_1+2)(Z_2+Z_3)+2Z_1^2-2Z_1+2\\
&\quad +(Z_3-Z_2)^2\left(4-\frac{9}{4}(Z_2+Z_3)^2\right)\quad  \text{since $Z_4\leq \frac{6\Delta}{k+l}$ in $\check{\mathcal{S}}$}\\
&\geq \frac{1}{32}\left(\frac{k+l}{\Delta}\right)^2(Z_2+Z_3)^4Z_4^2-2(Z_2+Z_3)^2-(12Z_1+2)(Z_2+Z_3)+2Z_1^2-2Z_1+2 \\
&\quad  \text{since $Z_2+Z_3\leq \frac{2}{3}$ in $\check{\mathcal{S}}$}\\
&= \mathcal{A}
\end{split}
\end{equation}
Hence $\mathcal{A}\geq 0$ in $\check{\mathcal{S}}$ implies $\mathcal{G}-X_4\geq 0$ in $\check{\mathcal{S}}$. The proof is complete.
\end{proof}

By Proposition \ref{prop: included in Q} and Proposition \ref{prop: A included in G-X4>0}, to show that $\check{\mathcal{S}}\subset \check{\mathcal{S}}\cap \{\mathcal{G}-X_4\geq 0\}$, it suffices to prove that 
\begin{equation}
\label{eqn: Q in A}
\check{\mathcal{S}}\cap \{\mathcal{Q} \geq 0\}\subset \check{\mathcal{S}}\cap \{\mathcal{A}\geq 0\}.
\end{equation}
The inclusion \eqref{eqn: Q in A} is easier to show since it only involves $Z_i$'s and polynomials $\mathcal{Q}$ and $\mathcal{A}$ can be viewed as polynomials in $Z_2+Z_3$. Specifically, by Proposition \ref{prop: S is compact}, we know that $0\leq Z_1\leq \frac{1}{2}$ in $\check{\mathcal{S}}\cap \{\mathcal{Q} \geq 0\}$. Fix $Z_1=\alpha\in\left[0,\frac{1}{2}\right]$ and $Z_4=\frac{6\Delta}{k+l}\beta$ with $\beta\in (0,1]$, define
\begin{equation}
\begin{split}
{q_1}_{(\alpha,\beta)}(Z_2+Z_3)&:=\mathcal{Q}\left(\alpha,Z_2,Z_3,\frac{6\Delta}{k+l}\beta\right)\\
&=\frac{3\beta}{4}(Z_2+Z_3)^2-\left(2+\frac{3}{2}\alpha\beta\right)(Z_2+Z_3)+\frac{3}{4}\alpha^2\beta-2\alpha+1,\\
{q_2}_{(\alpha,\beta)}(Z_2+Z_3)&:=\mathcal{A}\left(\alpha,Z_2,Z_3,\frac{6\Delta}{k+l}\beta\right)\\
&=\frac{9\beta^2}{8}(Z_2+Z_3)^4-2(Z_2+Z_3)^2-(12\alpha+2)(Z_2+Z_3)+2\alpha^2-2\alpha+2.
\end{split}
\end{equation}
It is clear that for each fixed $(\alpha,\beta)\in \left[0,\frac{1}{2}\right]\times (0,1]$, we have 
\begin{equation}
\begin{split}
{q_1}_{(\alpha,\beta)}(0)&=\frac{3}{4}\alpha^2\beta-2\alpha+1\geq 0,\\
{q_1}_{(\alpha,\beta)}\left(\frac{2}{3}\right)&=-\frac{1}{3}-2\alpha+\frac{\beta}{3}-\alpha\beta+\frac{3}{4}\alpha^2\beta\leq -2\alpha-\frac{5}{8}\alpha\beta\leq 0.
\end{split}
\end{equation}
It is also clear that $q_1$ has two positive roots for each fixed $(\alpha,\beta)$, with one root be no larger than $\frac{2}{3}$ and the other one no smaller.
Let $\omega(\alpha,\beta)$ be the smaller positive root of ${q_1}_{(\alpha,\beta)}$. It is clear that $\omega(\alpha,\beta)\in \left[0,\frac{2}{3}\right]$.
We have 
\begin{equation}
\begin{split}
&\left\{0\leq Z_1\leq \frac{1}{2},\quad 0\leq Z_4\leq \frac{6\Delta}{k+l},\quad 0\leq Z_2+Z_3\leq \frac{2}{3},\quad  \mathcal{Q} \geq 0\right\}\\
&=\bigcup_{(\alpha,\beta)\in \left[0,\frac{1}{2}\right]\times (0,1]} \left\{\left(\alpha,Z_2,Z_3,\frac{6\Delta}{k+l}\beta \right)\mid 0\leq Z_2+Z_3\leq \omega(\alpha,\beta)\right\}
\end{split}
\end{equation}

On the other hand, for each fixed $(\alpha,\beta)\in \left[0,\frac{1}{2}\right]\times (0,1]$, we have 
\begin{equation}
\begin{split}
{q_2}_{(\alpha,\beta)}(0)&=2\alpha^2-2\alpha+2> 0,\\
{q_2}_{(\alpha,\beta)}\left(\frac{2}{3}\right)&=\frac{2}{9}\beta^2-\frac{2}{9}-10\alpha+2\alpha^2\leq 0.
\end{split}
\end{equation}
Let $\zeta(\alpha,\beta)$ be the smallest non-negative root of ${q_2}_{(\alpha,\beta)}$. It is clear that $\zeta(\alpha,\beta)\in \left[0,\frac{2}{3}\right]$. We then have 
$$\bigcup_{(\alpha,\beta)\in \left[0,\frac{1}{2}\right]\times (0,1]} \left\{\left(\alpha,Z_2,Z_3,\frac{6\Delta}{k+l}\beta \right)\mid 0\leq Z_2+Z_3\leq \zeta(\alpha,\beta)\right\}\subset \{\mathcal{A}\geq 0\}.
$$

Hence \eqref{eqn: Q in A} boils down to proving the following proposition.
\begin{proposition}
\label{each slice is contained}
For each fixed $(\alpha,\beta)\in \left[0,\frac{1}{2}\right]\times (0,1]$,
$$\omega(\alpha,\beta)\leq \zeta(\alpha,\beta).$$
Moreover, the equality holds only at $(\alpha,\beta)=(0,1)$.
\end{proposition}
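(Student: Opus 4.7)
The plan is to reduce the inequality $\omega(\alpha,\beta)\le\zeta(\alpha,\beta)$ to an algebraic inequality in the two parameters, and then verify the latter as a resultant-style non-negativity statement.

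\emph{Step 1: from $\omega\le\zeta$ to $q_2(\omega)\ge 0$.} Since ${q_2}_{(\alpha,\beta)}(u)$ is a quartic in $u$ with positive leading coefficient and satisfies $q_2(0)>0$, $q_2(2/3)\le 0$, it is strictly positive on $[0,\zeta)$ and vanishes at $\zeta\in(0,2/3]$. Consequently $q_2(\omega)\ge 0$ is a necessary condition for $\omega\le\zeta$, and together with $\omega$ lying in the initial non-negative arc of $q_2$ it will also be sufficient. I would aim to prove the stronger statement that $q_2(\omega)\ge 0$ on all of $\mathcal{D}:=[0,\tfrac12]\times(0,1]$, with equality only at $(\alpha,\beta)=(0,1)$, and then pass to the actual ordering of $\omega$ and $\zeta$ by the continuity argument in Step 3.

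\emph{Step 2: polynomial reduction.} Since $q_1(\omega)=0$,
\begin{equation*}
\omega^2=\frac{8+6\alpha\beta}{3\beta}\,\omega-\frac{3\alpha^2\beta-8\alpha+4}{3\beta},
\end{equation*}
and iterating this identity expresses $\omega^3$ and $\omega^4$ as linear polynomials in $\omega$ with coefficients in $\mathbb{Q}(\alpha,\beta)$. Substituting into the expression for $q_2(\omega)$ yields
\begin{equation*}
q_2(\omega)=L_1(\alpha,\beta)\,\omega+L_0(\alpha,\beta)
\end{equation*}
for explicit polynomials $L_0,L_1$ (after clearing the relevant powers of $3\beta$). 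Using the quadratic formula in the form
\begin{equation*}
\omega=\frac{2\bigl(2+\tfrac{3}{2}\alpha\beta\bigr)-2\sqrt{D(\alpha,\beta)}}{3\beta},\qquad D:=\bigl(2+\tfrac{3}{2}\alpha\beta\bigr)^2-3\beta\bigl(\tfrac{3}{4}\alpha^2\beta-2\alpha+1\bigr),
\end{equation*}
the inequality $q_2(\omega)\ge 0$ becomes, after multiplication by $3\beta>0$,
\begin{equation*}
2L_1(\alpha,\beta)\bigl(2+\tfrac{3}{2}\alpha\beta\bigr)+3\beta L_0(\alpha,\beta)\ \ge\ 2L_1(\alpha,\beta)\sqrt{D(\alpha,\beta)}.
\end{equation*}
After verifying the signs of $L_1$ and of the left-hand side on $\mathcal{D}$ (by checking the corners and observing that sign changes cannot occur without the squared version vanishing), I would square and clear denominators to arrive at a single polynomial inequality $\mathcal{P}(\alpha,\beta)\ge 0$, where $\mathcal{P}$ is precisely (up to an explicit positive factor) the resultant of the pair $q_1, q_2$ with respect to $u$. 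The zero locus of $\mathcal{P}$ in $\mathcal{D}$ should then turn out to consist solely of the point $(0,1)$, matching the equality case already verified in the statement.

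\emph{Step 3: from $q_2(\omega)\ge 0$ to the ordering $\omega\le\zeta$.} Both $\omega$ and $\zeta$ are continuous functions of $(\alpha,\beta)$ on $\mathcal{D}$, and $\mathcal{D}\setminus\{(0,1)\}$ is connected. Step 2 delivers strict positivity $q_2(\omega)>0$ on $\mathcal{D}\setminus\{(0,1)\}$, so $\omega\ne\zeta$ throughout this set. A single evaluation at a convenient point — for instance $(\alpha,\beta)=(\tfrac12,1)$, where $q_1$ has smallest positive root $\omega=\tfrac{1}{6}(11-4\sqrt{7})\approx 0.07$ while $q_2$ at this $u$ is manifestly positive, hence $\omega<\zeta$ — fixes the sign. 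By continuity and non-vanishing of $\omega-\zeta$, we then have $\omega<\zeta$ everywhere on $\mathcal{D}\setminus\{(0,1)\}$, with the boundary value $\omega=\zeta=2/3$ attained only at $(0,1)$.

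\emph{Main obstacle.} The technical heart of the argument is Step 2: verifying non-negativity of the resultant polynomial $\mathcal{P}(\alpha,\beta)$. A direct expansion produces a polynomial of fairly high total degree that is not obviously a sum of squares. My strategy is first to extract the manifest factor vanishing at $(0,1)$ — presumably of the shape $(1-\beta)\cdot\alpha^k$ or a simple combination that vanishes to the correct order there — reducing the question to non-negativity of a simpler cofactor. Failing a clean factorization, I would split $\mathcal{D}$ into a few sub-rectangles on which the cofactor can be grouped into manifestly non-negative pieces (e.g.\ by bounding the $\beta$-dependence by $\beta\in(0,1]$ and treating $\alpha$ monotonically). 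This is likely exactly the ``very technical computation'' that the author has deferred to the appendix.
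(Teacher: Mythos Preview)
Your proposal is correct and lands on the same key obstruction as the paper: non-negativity of the resultant $r_{(\alpha,\beta)}(q_1,q_2)$ on $[0,\tfrac12]\times(0,1]$, with vanishing only at $(0,1)$. The paper invokes the resultant more directly---arguing that $\omega=\zeta$ forces $q_1,q_2$ to share a root and hence $r=0$---whereas you arrive at (a positive multiple of) the same polynomial by reducing $q_2(\omega)$ modulo $q_1$ and squaring out the radical from the quadratic formula. Your Step~2 detour through sign-checking $L_1$ and the unsquared left-hand side is not actually needed: once the resultant is shown to be nonzero off $(0,1)$, your Step~3 continuity argument on the connected set $\mathcal{D}\setminus\{(0,1)\}$ already delivers $\omega\ne\zeta$ there, and a single evaluation fixes the sign. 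The paper instead pins down the local ordering at $(0,1)$ via implicit differentiation (first partials in $\beta$, and second partials in $\alpha$ since the first $\alpha$-derivatives of $\omega$ and $\zeta$ coincide). Your single-point check is more elementary and avoids that second-order computation; the paper's approach makes the tangency structure at $(0,1)$ explicit. Both routes leave the heavy lifting to the appendix computation of the resultant's sign.
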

\begin{proof}
It is clear that $\omega(0,1)=\zeta(0,1)=\frac{2}{3}$. By implicit differentiation, we have 
$$
\left.\frac{\partial \omega}{\partial\alpha}\right|_{(\alpha,\beta)=(0,1)}=-3, \quad \left.\frac{\partial \omega}{\partial\beta}\right|_{(\alpha,\beta)=(0,1)}=\frac{1}{3},\quad 
\left.\frac{\partial \zeta}{\partial\alpha}\right|_{(\alpha,\beta)=(0,1)}=-3, \quad \left.\frac{\partial \zeta}{\partial\beta}\right|_{(\alpha,\beta)=(0,1)}=\frac{2}{15}.
$$
$$
\left.\frac{\partial^2 \omega}{\partial\alpha^2}\right|_{(\alpha,\beta)=(0,1)}=24, \quad \left.\frac{\partial^2 \zeta}{\partial\alpha^2}\right|_{(\alpha,\beta)=(0,1)}=\frac{141}{5}>24
$$
Hence $\omega\leq \zeta$ in a small neighborhood around $(\alpha,\beta)=(0,1)$ in $\left[0,\frac{1}{2}\right]\times (0,1]$.

Suppose $\omega=\zeta$ at some point in $\left[0,\frac{1}{2}\right]\times (0,1]$. Then ${q_1}_{(\alpha,\beta)}$ and ${q_2}_{(\alpha,\beta)}$ have common roots. Hence the resultant 
\scriptsize
\begin{equation}
\begin{split}
&r_{(\alpha,\beta)}(q_1,q_2)\\
&=\det\begin{bmatrix}
\frac{9\beta^2}{8}&0&\frac{3\beta}{4}&0&0&0\\
0&\frac{9\beta^2}{8}&-\left(2+\frac{3}{2}\alpha\beta\right)&\frac{3\beta}{4}&0&0\\
-2&0&\frac{3}{4}\alpha^2\beta-2\alpha+1&-\left(2+\frac{3}{2}\alpha\beta\right)&\frac{3\beta}{4}&0\\
-(12\alpha+2)&-2&0&\frac{3}{4}\alpha^2\beta-2\alpha+1&-\left(2+\frac{3}{2}\alpha\beta\right)&\frac{3\beta}{4}\\
2\alpha^2-2\alpha+2&-(12\alpha+2)&0&0&\frac{3}{4}\alpha^2\beta-2\alpha+1&-\left(2+\frac{3}{2}\alpha\beta\right)\\
0&2\alpha^2-2\alpha+2&0&0&0&\frac{3}{4}\alpha^2\beta-2\alpha+1
\end{bmatrix}\\
&= 27\beta^2\bigg[\frac{243}{16384} \alpha^8 \beta^6+\left(-\frac{81}{512}\alpha^7+\frac{81}{1024}\alpha^6\right) \beta^5+\left(\frac{81}{256} \alpha^6-\frac{189}{256} \alpha^5+\frac{27}{128}\alpha^4\right) \beta^4\\
&\quad +\left(-\frac{153}{32}  \alpha^5 -\frac{27}{32} \alpha^4+\frac{27}{16} \alpha^3-\frac{9}{32} \alpha^2 \right) \beta^3+\left( 18 \alpha^4-\frac{39}{2} \alpha^3+\frac{159}{16} \alpha^2-\frac{9}{4} \alpha+\frac{3}{16}\right) \beta^2\\
&\quad +\left(21 \alpha^3-15 \alpha^2+\frac{17}{4} \alpha-\frac{7}{16}\right) \beta+6 \alpha^2-2 \alpha+\frac{1}{4}\bigg]
\end{split}
\end{equation}
\normalsize
vanishes for some $(\alpha,\beta)$. Straightforward observation in the appendix shows that $r(\alpha,\beta)$ is non-negative for any $(\alpha,\beta)\in \left[0,\frac{1}{2}\right]\times (0,1]$ and it vanishes only at $(0,1)$. Hence $\omega\leq \zeta$ and the equality holds only at $(0,1)$. The proof is complete.
\end{proof}

\begin{figure}[h!] 
\centering
  \includegraphics[clip,trim=14cm 2cm 8cm 5cm,width=0.6\linewidth]{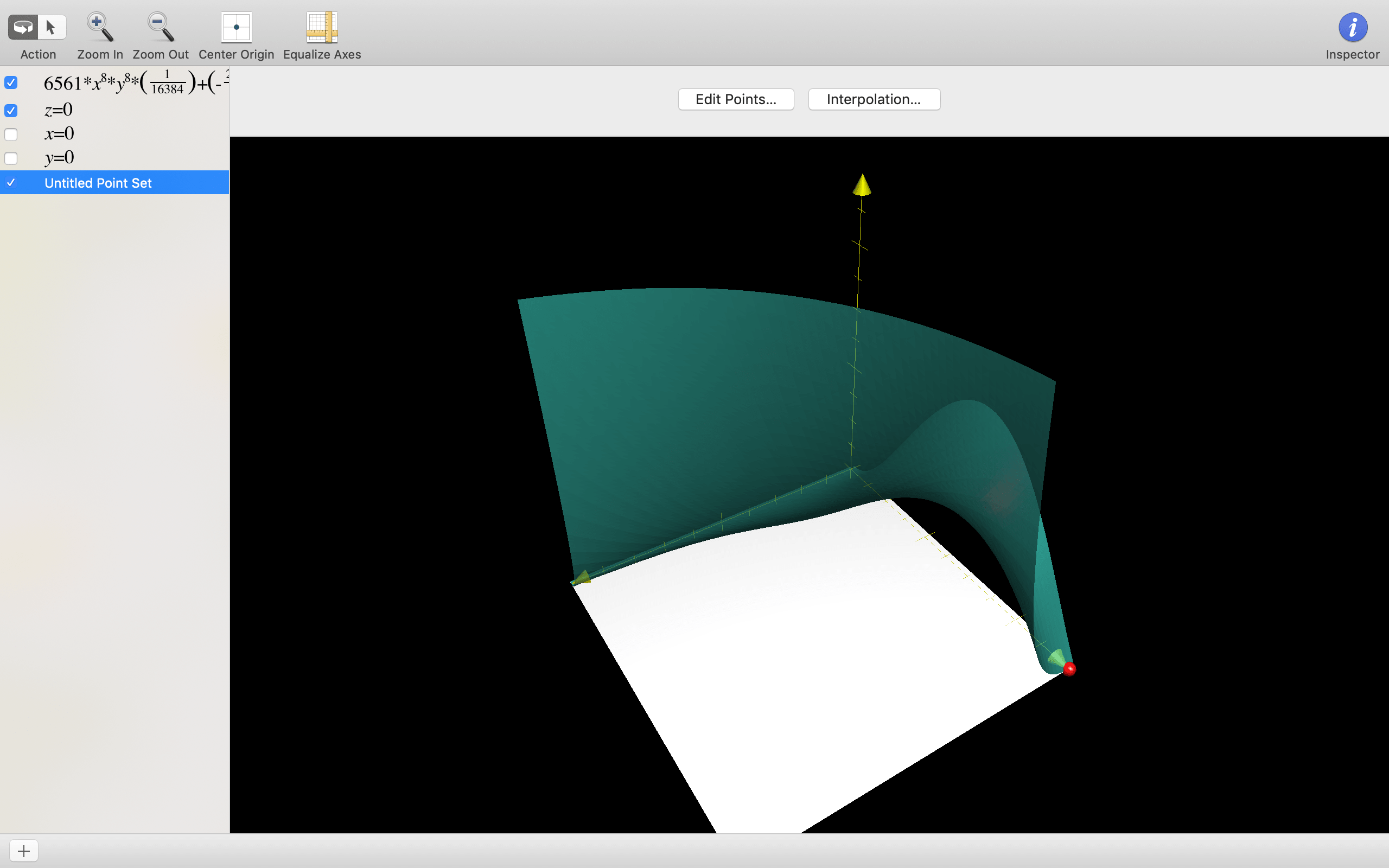}
\caption{The graph of $r(\alpha,\beta)$ defined on $\left[0,\frac{1}{2}\right]\times (0,1]$ is non-negative by straightforward observation. The dot at the right corner is $(0,1)$. The rigorous proof of $r(\alpha,\beta)$ being non-negative is presented in the appendix.}
\label{fig: resultant}
\end{figure}

In summary, we have the following lemma.
\begin{lemma}
\label{lem: invariant set}
The set $\check{\mathcal{S}}$ is invariant.
\end{lemma}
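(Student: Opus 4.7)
The plan is to combine the three preparatory propositions with a straightforward boundary inspection. Since $\check{\mathcal{S}} \subset \mathcal{C}^+_{\spin(7)}$ and the latter is invariant by Proposition \ref{prop: invariatn spin(7)+}, I only need to rule out escape through the two $Z_4$-cuts $\{Z_4 = 0\}$ and $\left\{Z_4 = \frac{6\Delta}{k+l}\right\}$ that carve $\check{\mathcal{S}}$ out of $\mathcal{C}^+_{\spin(7)}$, and then invoke a connectedness argument to stay in the correct component.

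First I would observe that the face $\{Z_4 = 0\}$ is itself invariant: the last entry of $V$ in \eqref{eqn: new Einstein equation} gives $Z_4' = Z_4(-\mathcal{G} + X_4)$, which vanishes on this face, so no integral curve starting at $Z_4 > 0$ can cross it.

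Next I would examine the face $\left\{Z_4 = \frac{6\Delta}{k+l}\right\}$. Since
\[
\left.\langle \nabla Z_4, V \rangle\right|_{Z_4 = \frac{6\Delta}{k+l}} = -\frac{6\Delta}{k+l}(\mathcal{G} - X_4),
\]
the flow points inward (or is tangent) exactly when $\mathcal{G} - X_4 \geq 0$ on $\check{\mathcal{S}}$. Here I would chain the three inclusions already established: Proposition \ref{prop: included in Q} gives $\check{\mathcal{S}} \subset \{\mathcal{Q} \geq 0\}$; Proposition \ref{each slice is contained}, reparametrised slice-by-slice by $(\alpha,\beta) = \left(Z_1, \frac{k+l}{6\Delta}Z_4\right) \in \left[0,\tfrac{1}{2}\right] \times (0,1]$, upgrades this to $\check{\mathcal{S}} \cap \{\mathcal{Q} \geq 0\} \subset \{\mathcal{A} \geq 0\}$; and Proposition \ref{prop: A included in G-X4>0} closes the loop with $\check{\mathcal{S}} \cap \{\mathcal{A} \geq 0\} \subset \{\mathcal{G} - X_4 \geq 0\}$. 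Hence the flow cannot exit through the upper face either, and $Z_4$ remains confined to $\left[0, \frac{6\Delta}{k+l}\right]$ along the curve.

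Finally, once the curve is trapped in $\left\{0 \leq Z_4 \leq \frac{6\Delta}{k+l}\right\}$ it stays in $\mathcal{S}$, and by Proposition \ref{prop: S is compact} the set $\mathcal{S}$ splits into the two connected components $\check{\mathcal{S}} \sqcup \hat{\mathcal{S}}$ whose $Z_2 + Z_3$ values are separated by the open gap $\left(\tfrac{2}{3}, 2\right)$; continuity of the flow then forces the curve to remain in the starting component $\check{\mathcal{S}}$. The main technical obstacle is not in this bookkeeping but was already absorbed into the preparatory Proposition \ref{each slice is contained}, whose positivity check for a degree-$8$ resultant is deferred to the appendix; assembling the three inclusions with the gap argument then yields invariance essentially for free.
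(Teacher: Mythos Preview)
Your overall strategy matches the paper's exactly: chain Propositions \ref{prop: included in Q}, \ref{each slice is contained}, and \ref{prop: A included in G-X4>0} to obtain $\mathcal{G}-X_4\geq 0$ on $\check{\mathcal{S}}$, then read off $\langle\nabla Z_4,V\rangle\leq 0$ on the upper face and use the gap $\left(\tfrac{2}{3},2\right)$ in $Z_2+Z_3$ to stay in the compact component. That part is fine.

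The one genuine gap is the tangential case on the face $\left\{Z_4=\tfrac{6\Delta}{k+l}\right\}$. You note the flow ``points inward (or is tangent)'' but then stop; weak inequality alone does not by itself forbid an integral curve from touching the face and leaving. The paper closes this by exploiting the \emph{equality} clauses you never use: if a non-transverse crossing occurs at some point with $Z_4=\tfrac{6\Delta}{k+l}$, then $\mathcal{G}-X_4=0$, which by the chain of inequalities in the proofs of Propositions \ref{prop: A included in G-X4>0} and \ref{prop: included in Q} forces $\mathcal{A}=\mathcal{Q}=0$ and in particular $Z_2=Z_3$; and by the ``moreover'' in Proposition \ref{each slice is contained} (equality $\omega=\zeta$ only at $(\alpha,\beta)=(0,1)$) this forces $Z_1=0$. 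Together with the conservation law these pin the point down to $P_0^{(k+l)}$, a critical point, so no genuine crossing is possible. You should add this sentence; without it the argument is incomplete in the same way that Proposition \ref{prop: X4 not negative} would be incomplete without its last paragraph.
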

\begin{proof}
We have the following chain of inclusions:
\begin{equation}
\begin{split}
\check{\mathcal{S}}&=\check{\mathcal{S}}\cap \{\mathcal{Q}\geq 0\}\quad \text{by Proposition \ref{prop: included in Q}}\\
&\subset \left\{0\leq Z_1\leq \frac{1}{2},\quad 0\leq Z_2+Z_3\leq \frac{2}{3},\quad 0< Z_4\leq \frac{6\Delta}{k+l},\quad \mathcal{Q}\geq 0 \right\}\quad \text{by Proposition \ref{prop: S is compact}}\\
&= \bigcup_{(\alpha,\beta)\in \left[0,\frac{1}{2}\right]\times (0,1]} \left\{\left(\alpha,Z_2,Z_3,\frac{6\Delta}{k+l}\beta \right)\mid 0\leq Z_2+Z_3\leq \omega(\alpha,\beta)\right\}\\
&\subset \bigcup_{(\alpha,\beta)\in \left[0,\frac{1}{2}\right]\times (0,1]} \left\{\left(\alpha,Z_2,Z_3,\frac{6\Delta}{k+l}\beta \right)\mid 0\leq Z_2+Z_3\leq \zeta(\alpha,\beta)\right\}\quad \text{by Proposition \ref{each slice is contained}}\\
&\subset \left\{\mathcal{A}\geq 0\right\}
\end{split}.
\end{equation}
Therefore we have 
\begin{equation}
\begin{split}
\check{\mathcal{S}}= \check{\mathcal{S}}\cap \{\mathcal{Q}\geq 0\} \subset \check{\mathcal{S}}\cap \{\mathcal{A}\geq 0\} \subset \check{\mathcal{S}}\cap \{\mathcal{G}-X_4\geq 0\}
\end{split}
\end{equation}
by Proposition \ref{prop: A included in G-X4>0}. In other words, inequality 
$$
\mathcal{G}-X_4\geq \mathcal{A}\geq \mathcal{Q}\geq 0
$$
holds in $\check{\mathcal{S}}$. Hence we have
\begin{equation}
\begin{split}
&\left.\langle\nabla Z_4, V\rangle\right|_{Z_4=\frac{6\Delta}{k+l}}=-\frac{6\Delta}{k+l}(\mathcal{G}-X_4)\leq 0.
\end{split}
\end{equation}
If non-transverse crossings emerge on an integral curve in $\check{\mathcal{S}}$, then $Z_4=\frac{6\Delta}{k+l}$ and $\mathcal{G}-X_4= \mathcal{A}=\mathcal{Q}=0$ at each crossing point. By Proposition \ref{prop: included in Q} and Proposition \ref{each slice is contained}, we know that $Z_1=0$ and $Z_2=Z_3$ at the crossing point, which force it to be the critical point $P_0^{(k+l)}$. Non-transverse crossing is hence excluded and the compact set $\check{\mathcal{S}}$ is invariant. The proof is complete.
\end{proof}

\subsection{Global Metrics on $M_{k,l}^{(k)}$}
\label{subsec: Tk Tl}
One may intend to use the method above to prove the global existence of both $\gamma^{(k)}_{(s_1,s_2)}$ and $\gamma^{(l)}_{(s_1,s_2)}$. However, by the following proposition, the method is only effective for the case on $M_{k,l}^{(k)}$.
\begin{proposition}
\label{prop: Tk is compact}
The set 
\begin{equation}
\mathcal{T}_k:=\mathcal{C}^-_{\spin(7)}\cap \left\{ 0\leq Z_4\leq \frac{6\Delta}{k}\right\}
\end{equation}
consists of two connected component, with one of the component being compact.
\end{proposition}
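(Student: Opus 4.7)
The plan is to mimic the proof of Proposition \ref{prop: S is compact}, using the conservation law \eqref{eqn: new opposite spin(7) conservation} for $\mathcal{C}^-_{\spin(7)}$ together with the bound $Z_4 \leq 6\Delta/k$ and the standing assumption $k \geq l \geq 0$. I would first rearrange \eqref{eqn: new opposite spin(7) conservation} into the form
\begin{equation}
2(Z_1+Z_2+Z_3) = 1 - \frac{k+l}{2\Delta}Z_2Z_3Z_4 + \frac{l}{2\Delta}Z_1Z_3Z_4 + \frac{k}{2\Delta}Z_1Z_2Z_4.
\end{equation}
Using $Z_i \geq 0$ to drop the non-positive term, and using $Z_4 \leq 6\Delta/k$ together with $l \leq k$ to bound $\frac{l}{2\Delta}Z_4 \leq 3$ and $\frac{k}{2\Delta}Z_4 \leq 3$, I obtain
\begin{equation}
2(Z_1+Z_2+Z_3) \leq 1 + 3 Z_1 Z_2 + 3 Z_1 Z_3 = 1 + 3Z_1(Z_2+Z_3).
\end{equation}

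Setting $u := Z_1+Z_2+Z_3$ and applying the AM--GM bound $Z_1(Z_2+Z_3) \leq \tfrac{u^2}{4}$, this becomes $\tfrac{3}{4}u^2 - 2u + 1 \geq 0$, which factors as $\left(\tfrac{3}{2}u - 1\right)\left(\tfrac{1}{2}u - 1\right) \geq 0$. Consequently, every point of $\mathcal{T}_k$ satisfies either $u \leq \tfrac{2}{3}$ or $u \geq 2$, so the open strip $\{\tfrac{2}{3} < u < 2\}$ disconnects $\mathcal{T}_k$ into two pieces. The critical point $P_0^{(k)}$, with $u = \tfrac{2}{3}$, lies in the low-$u$ piece. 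To populate the high-$u$ piece, I would exhibit an explicit point on the slice $\{Z_3 = 0,\, Z_4 = 6\Delta/k\}$, where the conservation reduces to $3 Z_1 Z_2 = 2(Z_1+Z_2) - 1$. Taking for instance $(Z_1, Z_2) = (\tfrac{3}{2}, \tfrac{4}{5})$ gives $u = \tfrac{23}{10} > 2$; solving \eqref{eqn: new opposite spin(7) equation} for the associated $X_i$'s yields $X_4 = \tfrac{18}{5} \geq 0$, so this point genuinely lies in $\mathcal{T}_k$.

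On the low-$u$ piece, the bound $u \leq \tfrac{2}{3}$ forces each $Z_i \in [0,\tfrac{2}{3}]$, and $Z_4 \in [0,6\Delta/k]$ is bounded by assumption; since the $X_i$'s are polynomial functions of the $Z_i$'s via \eqref{eqn: new opposite spin(7) equation}, they are bounded as well. Closedness is inherited from $\mathcal{C}^-_{\spin(7)}$ being cut out of $\mathcal{C}_{RF}$ by non-strict polynomial equalities intersected with non-strict inequalities, so this piece is compact.

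The main subtlety I anticipate is the essential use of $l \leq k$: it is precisely this hypothesis that allows $\tfrac{l}{2\Delta}Z_4 \leq 3$ to follow from $Z_4 \leq 6\Delta/k$, which in turn enables the clean reduction to a quadratic in $u$. When the bound on $Z_4$ is instead loosened to $6\Delta/l$ (the natural bound for the singular orbit associated with $P_0^{(l)}$), the factor $\tfrac{k}{2\Delta}Z_4$ can exceed $3$, the quadratic dichotomy breaks, and one cannot trap integral curves in a compact invariant set by this approach—which is exactly why the proof strategy does not apply to $\gamma^{(l)}_{(s_1,s_2)}$, as foreshadowed in the preamble to Section \ref{subsec: Tk Tl}.
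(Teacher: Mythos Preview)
Your proof is correct and follows essentially the same route as the paper: both start from the conservation identity in $\mathcal{C}^-_{\spin(7)}$, drop the non-positive $Z_2Z_3$-term, use $Z_4\leq 6\Delta/k$ together with $l\leq k$ to reach $0\leq 1-2u+3Z_1(Z_2+Z_3)$, and then pass to the factored quadratic in $u=Z_1+Z_2+Z_3$. Your AM--GM step $Z_1(Z_2+Z_3)\leq u^2/4$ is literally the paper's addition of $\tfrac{3}{4}(-Z_1+Z_2+Z_3)^2$ in disguise. The only extra content you supply is an explicit witness in the $u\geq 2$ piece, which the paper omits; your discussion of why the argument breaks for $\mathcal{T}_l$ is also on point.
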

\begin{proof}
We first prove the statement for $\mathcal{T}_k$.
By \eqref{eqn: new opposite spin(7) equation}, we have 
\begin{equation}
\label{eqn: compactness for Tk}
\begin{split}
0&=1-2(Z_1+Z_2+Z_3)-\left(\frac{k+l}{2\Delta}Z_2Z_3-\frac{l}{2\Delta}Z_1Z_3-\frac{k}{2\Delta}Z_1Z_2\right)Z_4\\
&= 1-2(Z_1+Z_2)-Z_3\left(2-\frac{l}{2\Delta}Z_1Z_4\right)-\frac{k+l}{2\Delta}Z_2Z_3Z_4+\frac{k}{2\Delta}Z_1Z_2Z_4\\
&\leq  1-2(Z_1+Z_2)-Z_3\left(2-3\frac{l}{k}Z_1\right)+3Z_1Z_2\quad \text{since  $Z_4\leq \frac{6\Delta}{k}$ in $\mathcal{T}_k$}\\
&\leq  1-2(Z_1+Z_2)-Z_3\left(2-3Z_1\right)+3Z_1Z_2\quad \text{by the setting $k>l$}\\
&\leq  1-2(Z_1+Z_2)-Z_3\left(2-3Z_1\right)+3Z_1Z_2+\frac{3}{4}(-Z_1+Z_2+Z_3)^2\\
&=\frac{3}{4}(Z_1+Z_2+Z_3-2)\left(Z_1+Z_2+Z_3-\frac{2}{3}\right)
\end{split}.
\end{equation}
Hence either $Z_1+Z_2+Z_3\leq \frac{2}{3}$ or $Z_1+Z_2+Z_3\geq 2$ in $\mathcal{T}_k$. Hence we can write $\mathcal{T}_k=\check{\mathcal{T}_k}\sqcup\hat{\mathcal{T}_k}$, where $\check{\mathcal{T}_k}$ is the compact component
$$
\mathcal{C}^-_{\spin(7)}\cap \left\{0\leq Z_1+Z_2+Z_3\leq \frac{2}{3},\quad 0\leq Z_4\leq \frac{6\Delta}{k}\right\}.
$$
Since $P_0^{(k)}\in\check{\mathcal{T}_k}$, the compact component is non-empty. The proof is complete.
\end{proof}

\begin{remark}
\label{rem: sharper Z1}
Since $Z_1+Z_2+Z_3\leq \frac{2}{3}$ in $\check{\mathcal{T}_k}$, the estimate for $Z_1$ can be sharper. From the third last line of  computation \eqref{eqn: compactness for Tk}, we have 
\begin{equation}
\begin{split}
0&\leq 1-2(Z_1+Z_2)-Z_3\left(2-3Z_1\right)+3Z_1Z_2\\
&\leq 1-2(Z_1+Z_2)+3Z_1Z_2\\
&=1-2Z_1+(3Z_1-2)Z_2\\
&\leq 1-2Z_1
\end{split}.
\end{equation}
Hence $Z_1\leq \frac{1}{2}$ in $\check{\mathcal{T}_k}$
\end{remark}

\begin{figure}[h!] 
\centering
  \includegraphics[clip,trim=14cm 2cm 8cm 5cm,width=0.6\linewidth]{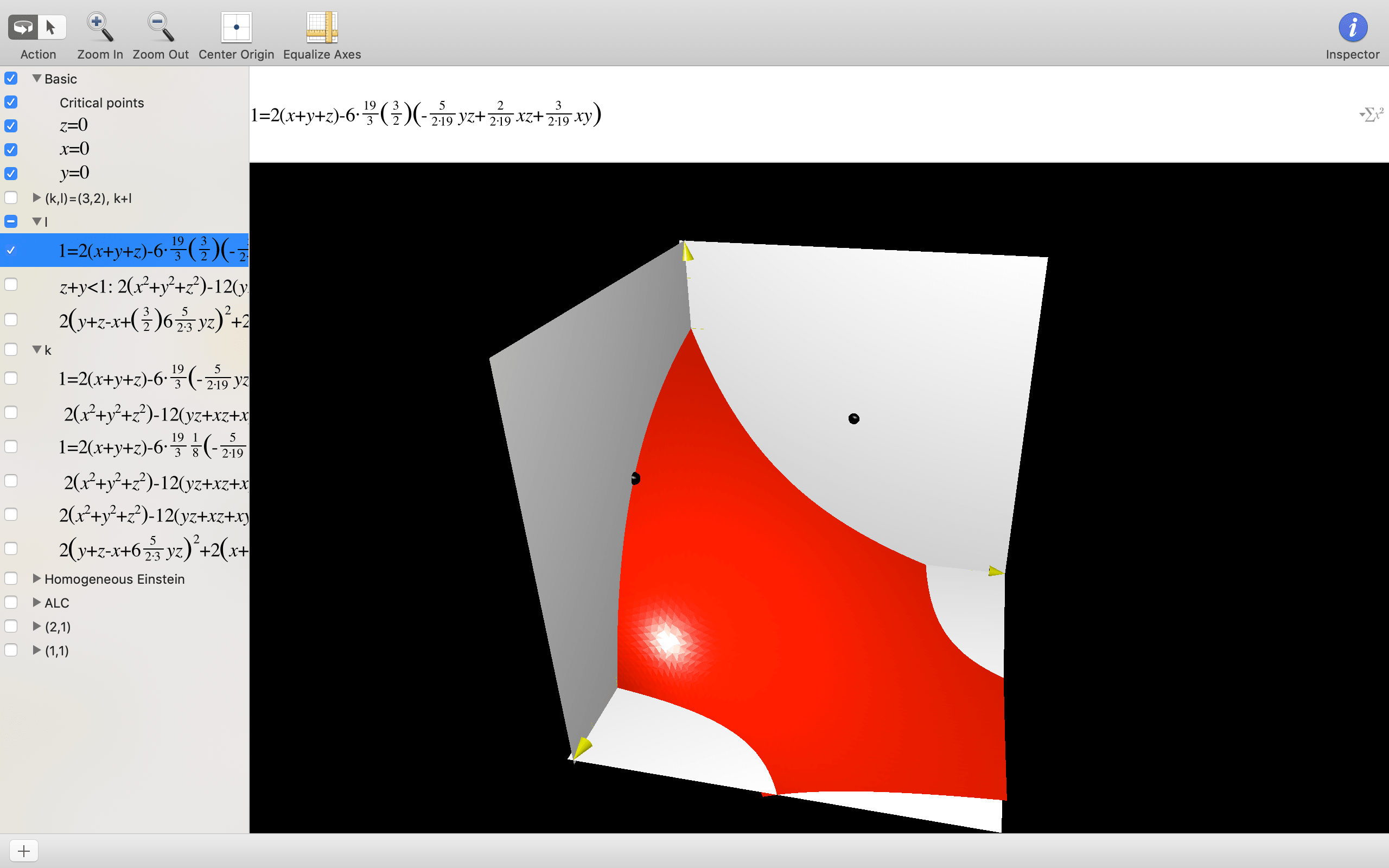}
\caption{The set $\mathcal{T}_l$ with fixed $Z_4=\frac{6\Delta}{l}$ in the case $(k,l)=(3,2)$ is shown above. It is connected and non-compact. More estimates are needed to prove the global existence on $M_{k,l}^{(l)}$.}
\label{fig: Tl}
\end{figure}

\begin{figure}[h!] 
\centering
\begin{subfigure}{.4\textwidth}
  \centering 
  \includegraphics[clip,trim=12cm 3cm 8cm 5cm,width=1\linewidth]{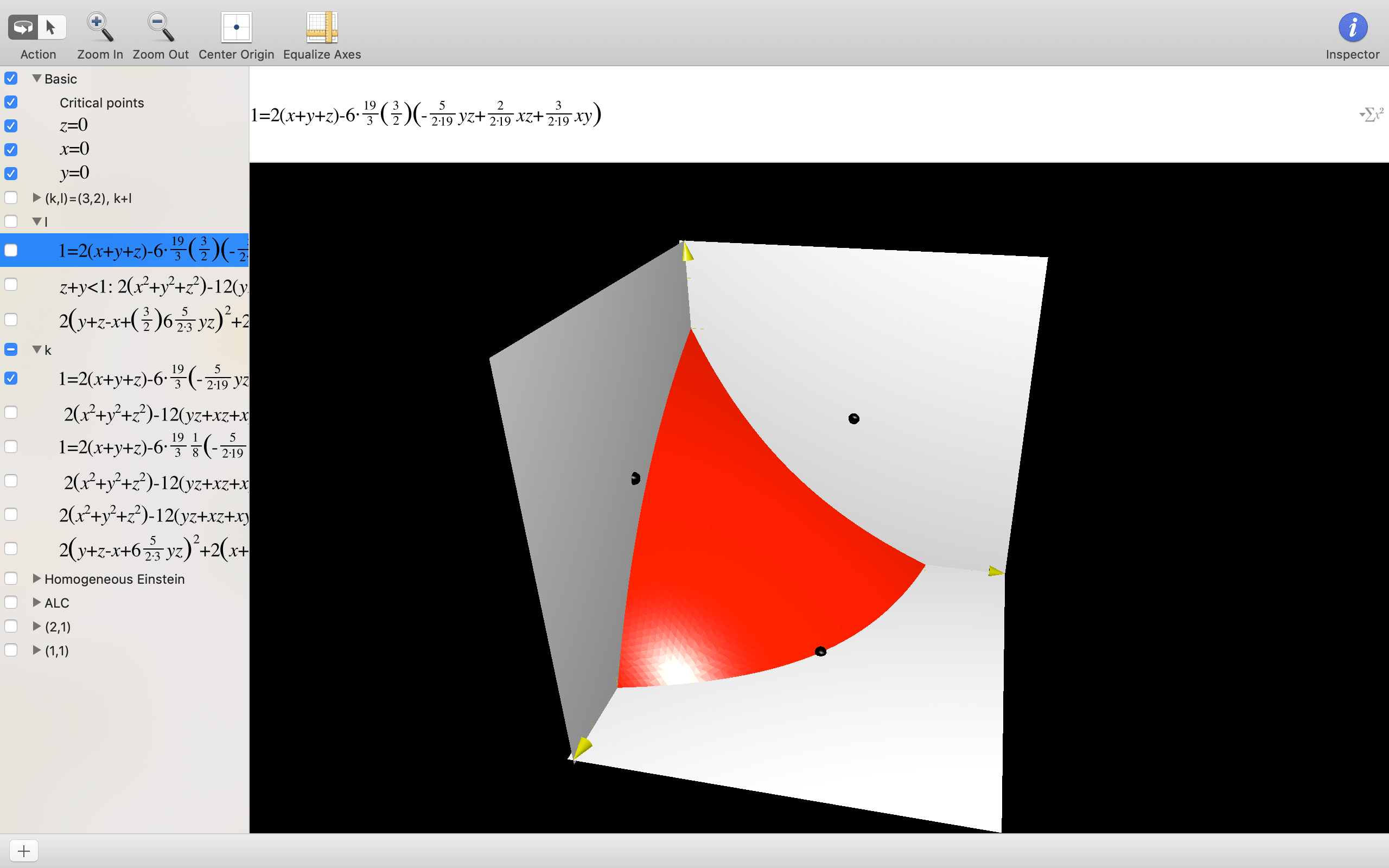}
\end{subfigure}
\begin{subfigure}{.4\textwidth}
  \centering 
  \includegraphics[clip,trim=12cm 3cm 8cm 5cm,width=1\linewidth]{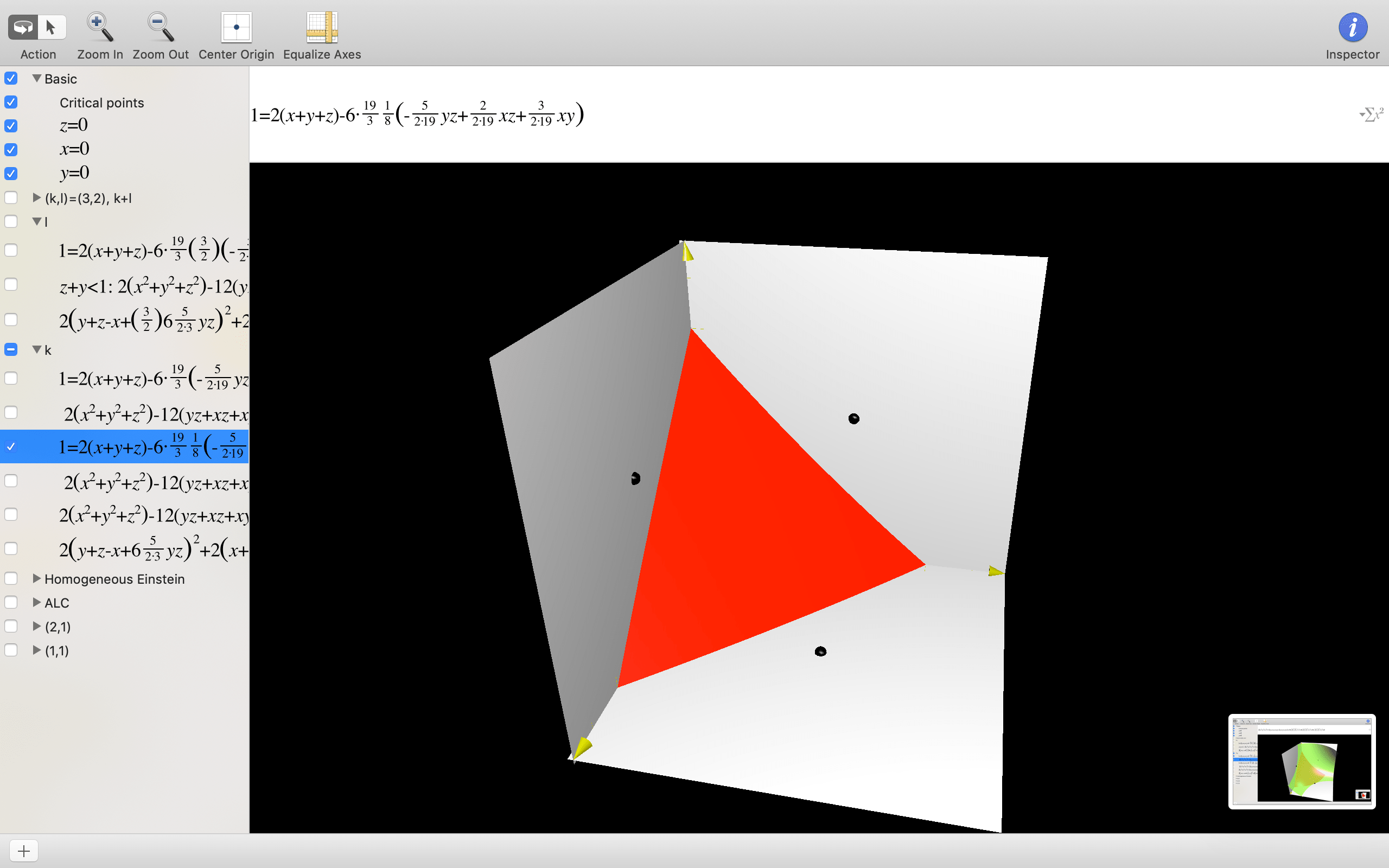}
\end{subfigure}\\
\begin{subfigure}{.4\textwidth}
  \centering
  \includegraphics[clip,trim=12cm 3cm 8cm 5cm,width=1\linewidth]{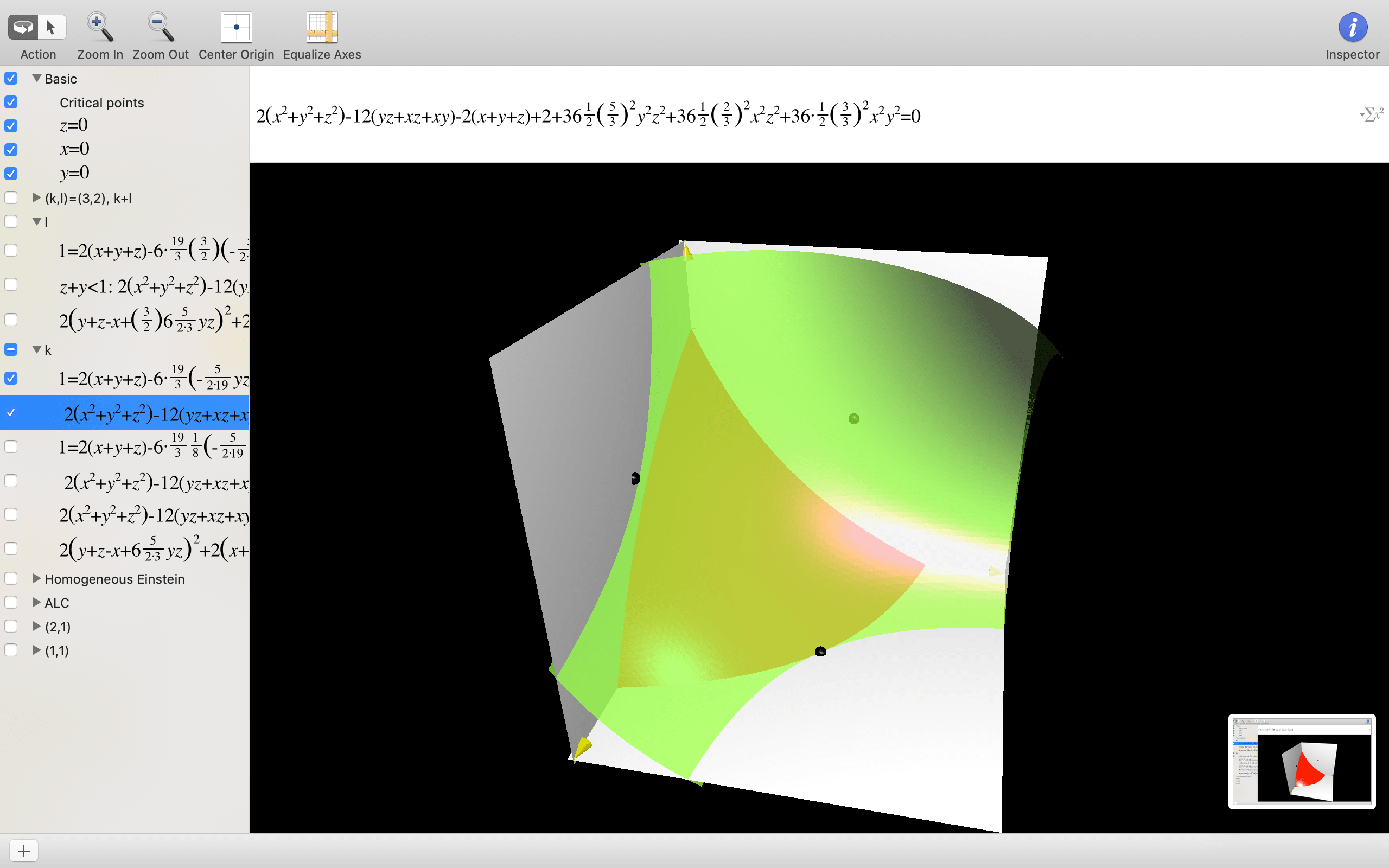}
      \caption{$Z_4=\frac{6\Delta}{k}$}
\end{subfigure}
\begin{subfigure}{.4\textwidth}
  \centering
  \includegraphics[clip,trim=12cm 3cm 8cm 5cm,width=1\linewidth]{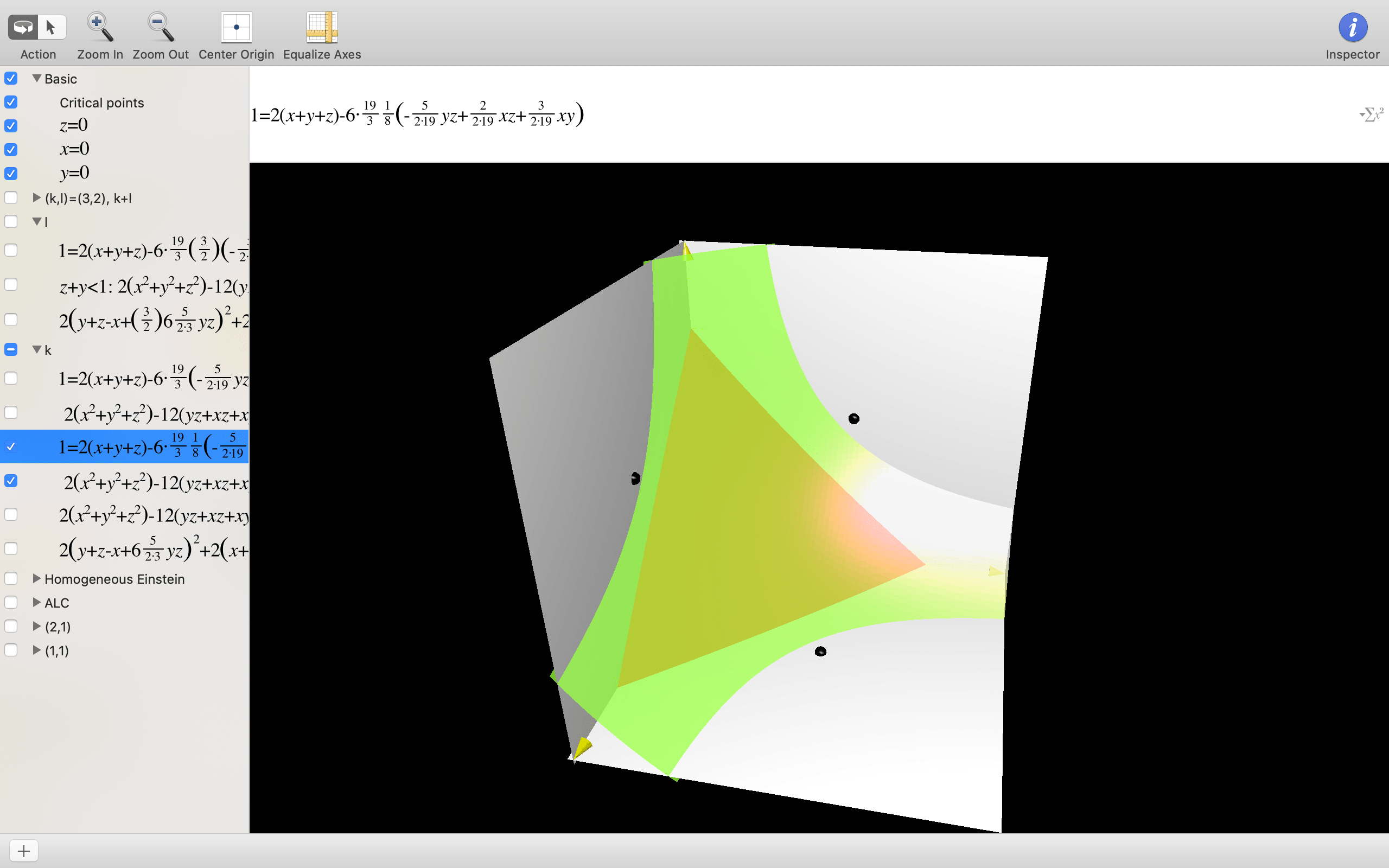}
  \caption{$Z_4=\frac{1}{8}\frac{6\Delta}{k}$}
\end{subfigure}
\caption{The $\spin(7)$ conditions for $\mathcal{T}_k$ and $\mathcal{T}_l$ have the same chirality. In Figure \ref{fig: Tl}, the set is non-compact. With smaller $Z_4$, the set ``shrinks'' to a compact one. Similar to the case for $\check{\mathcal{S}}$, the figures above show $\check{\mathcal{T}_k}\subset \check{\mathcal{T}_k}\cap \{\mathcal{G}-X_4\geq 0\}$, with $(k,l)=(3,2)$.
Figures in the first row demonstrate  the compact set $\check{\mathcal{T}_k}$ for fixed $Z_4\leq \frac{6\Delta}{k}$. Figures in the second row demonstrate the set $\check{\mathcal{T}_k}\cap \{\mathcal{G}-X_4=0\}$ for fixed $Z_4\leq \frac{6\Delta}{k}$.}
\label{fig: Tk}
\end{figure}

\begin{remark}
On the other hand, consider the set
\begin{equation}
\mathcal{T}_l:=\mathcal{C}^-_{\spin(7)}\cap \left\{0\leq Z_4\leq \frac{6\Delta}{l}\right\}.
\end{equation}
The estimate \eqref{eqn: compactness for Tk} does not hold by the setting $k>l$. As shown in Figure \ref{fig: Tl}, $\mathcal{T}_l$ is connected and non-compact. Therefore, even if we are able to prove the invariance of $\mathcal{T}_l$, the upper bound $\frac{6\Delta}{l}$ for $Z_4$ may seems to be too large to start with.
\end{remark}

Now that the construction boils down to showing the inclusion
$$
\check{\mathcal{T}_k}\subset \check{\mathcal{T}_k}\cap \{\mathcal{G}-X_4\geq 0\}.
$$ 
It turns out that for the case $M_{k,l}^{(k)}$, the analysis is much more delicate than the case $M_{k,l}^{(k+l)}$. We first state the following proposition.
\begin{proposition}
\label{prop: smaller Tk}
The set $\mathcal{C}^-_{\spin(7)}\cap \{Z_1-Z_2\geq 0\}\cap \{Z_2-Z_3\geq 0\}$ is invariant.
\end{proposition}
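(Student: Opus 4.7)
The plan is a direct tangency argument at the two boundary hyperplanes, exactly in the spirit of Proposition \ref{prop: X4 not negative}. Since $\mathcal{C}^-_{\spin(7)}$ is already known to be invariant, it is enough to show that along the restricted vector field $\tilde V$ the functions $Z_1-Z_2$ and $Z_2-Z_3$ are non-decreasing on $\{Z_1-Z_2=0\}$ and $\{Z_2-Z_3=0\}$ respectively, and then rule out non-transverse crossings.

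First I would compute, directly from the last four rows of $V$ in \eqref{eqn: new Einstein equation}, the two identities
\begin{equation*}
(Z_1-Z_2)' = (\mathcal{G}-X_3)(Z_1-Z_2) + (X_1-X_2)(Z_1+Z_2),
\end{equation*}
\begin{equation*}
(Z_2-Z_3)' = (\mathcal{G}-X_1)(Z_2-Z_3) + (X_2-X_3)(Z_2+Z_3).
\end{equation*}
On the hyperplane $\{Z_1-Z_2=0\}$ only the last summand of the first identity survives. Using $H_1=H_2=0$ from \eqref{eqn: new opposite spin(7) equation} to eliminate $X_1$ and $X_2$, one gets $X_1-X_2 = 2(Z_2-Z_1) + \tfrac{Z_3 Z_4}{2\Delta}\bigl((k+l)Z_2 + l Z_1\bigr)$, which on $Z_1=Z_2$ simplifies to $\tfrac{(k+2l)Z_1 Z_3 Z_4}{2\Delta}\geq 0$. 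Similarly, $H_2=H_3=0$ gives $X_2-X_3 = 2(Z_3-Z_2) + \tfrac{Z_1 Z_4}{2\Delta}(k Z_2 - l Z_3)$, and on $Z_2=Z_3$ this becomes $\tfrac{(k-l)Z_1 Z_2 Z_4}{2\Delta}$. This is where the standing convention $k\geq l$ enters crucially: it is exactly what forces the right-hand side to be non-negative.

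To finish, I would exclude non-transverse crossings as in Proposition \ref{prop: X4 not negative}. On $\{Z_1-Z_2=0\}$ the derivative $(Z_1-Z_2)'$ vanishes only if $Z_1=Z_2=0$, $Z_3=0$, or $Z_4=0$; each of these loci is invariant under \eqref{eqn: new Einstein equation} (the corresponding coordinate equation is homogeneous in that coordinate), so any integral curve meeting such a point stays there and trivially preserves $Z_1-Z_2=0$. The same bookkeeping with $\{Z_1=0\}$, $\{Z_4=0\}$, $\{Z_2=Z_3=0\}$ handles the other boundary. Combined, this proves the invariance.

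The only mildly delicate ingredient is the sign of $X_2-X_3$ on $\{Z_2=Z_3\}$, which is why the proposition is stated with $k\geq l$; the analogous statement with the roles of $k$ and $l$ exchanged would fail on its boundary, explaining the asymmetry foreshadowed at the beginning of Section \ref{subsec: Tk Tl}.
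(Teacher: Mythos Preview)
Your argument is correct and follows essentially the same approach as the paper: compute the normal derivative of $Z_1-Z_2$ and $Z_2-Z_3$ at their respective zero sets, substitute for $X_i$ using \eqref{eqn: new opposite spin(7) equation}, and observe the resulting expressions are non-negative (with $k\geq l$ being exactly what is needed for the second one). Your added treatment of non-transverse crossings is a refinement the paper omits for this proposition; the phrasing could be tightened slightly (it is the intersection of each vanishing locus with the hyperplane $\{Z_1=Z_2\}$ or $\{Z_2=Z_3\}$ that is invariant, not just the locus alone), but the underlying reasoning is sound.
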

\begin{proof}
Consider
\begin{equation}
\begin{split}
&\left.\langle\nabla (Z_1-Z_2), V\rangle\right|_{Z_1-Z_2=0}\\
&=2Z_1(X_1-X_2)\\
&=4(Z_2-Z_1)+\frac{k+l}{\Delta}Z_2Z_3Z_4+\frac{l}{\Delta}Z_1Z_3Z_4 \quad \text{replace $X_1$ and $X_2$ using \eqref{eqn: new opposite spin(7) equation}}\\
&=\frac{k+l}{\Delta}Z_2Z_3Z_4+\frac{l}{\Delta}Z_1Z_3Z_4\\
&\geq 0
\end{split}
\end{equation}
and 
\begin{equation}
\begin{split}
&\left.\langle\nabla (Z_2-Z_3), V\rangle\right|_{Z_2-Z_3=0}\\
&=2Z_2(X_2-X_3)\\
&=4(Z_3-Z_2)+\frac{k}{\Delta}Z_1Z_2Z_4-\frac{l}{\Delta}Z_1Z_3Z_4 \quad \text{replace $X_2$ and $X_3$ using \eqref{eqn: new opposite spin(7) equation}}\\
&=\frac{k-l}{\Delta}Z_1Z_2Z_4\quad \text{since $Z_2=Z_3$}\\
&\geq 0
\end{split}.
\end{equation}
The proof is complete.
\end{proof}

We redefine $\check{\mathcal{T}_k}$ as $$\check{\mathcal{T}_k}:=
\mathcal{C}^-_{\spin(7)}\cap \left\{0\leq Z_1+Z_2+Z_3\leq \frac{2}{3},\quad 0\leq Z_4\leq \frac{6\Delta}{k}\right\}\cap \{Z_1-Z_2\geq 0\}\cap \{Z_2-Z_3\geq 0\}.$$ 
The problem boils down to prove the separation between $\check{\mathcal{T}_k}$ and $\check{\mathcal{T}_k}\cap \{\mathcal{G}-X_4=0\}$. For the case of $M_{k,l}^{(k)}$, the algebraic surface for $\mathcal{G}-X_4$ is too ``close'' to $\check{\mathcal{T}_k}$. It is difficult to find algebraic surfaces such as $\mathcal{A}=0$ and $\mathcal{Q}=0$ in the case of $M_{k,l}^{(k+l)}$ to separate $\check{\mathcal{T}_k}$ and $\check{\mathcal{T}_k}\cap \{\mathcal{G}-X_4=0\}$. Hence in this case, we take algebraic surface given by \eqref{eqn: new opposite spin(7) conservation} and the one from  \eqref{eqn: new conservation} and Proposition \ref{prop: X_4 and Zi}:
$$\mathcal{G}-X_4=1-\mathcal{R}_s-X_4=\mathcal{R}_s+2-2(Z_1+Z_2+Z_3).$$
Specifically, define 
$$
\mathcal{P}(Z_1,Z_2,Z_3,Z_4):= 1-2(Z_1+Z_2+Z_3)-\frac{k+l}{2\Delta}Z_2Z_3Z_4+\frac{l}{2\Delta}Z_1Z_3Z_4+\frac{k}{2\Delta}Z_1Z_2Z_4.
$$
And define 
\begin{equation}
\begin{split}
&\mathcal{B}(Z_1,Z_2,Z_3,Z_4)\\
&=2(Z_1^2+Z_2^2+Z_3^2)-12(Z_2Z_3+Z_1Z_2+Z_1Z_3)+2-2(Z_1+Z_2+Z_3)\\
&\quad +\frac{1}{2}\left(\frac{k+l}{\Delta}\right)^2Z_2^2Z_3^2Z_4^2+\frac{1}{2}\left(\frac{l}{\Delta}\right)^2Z_1^2Z_3^2Z_4^2+\frac{1}{2}\left(\frac{k}{\Delta}\right)^2Z_1^2Z_2^2Z_4^2
\end{split}
\end{equation}

With each fixed  $Z_2=\alpha Z_1$, $Z_3=\beta Z_1$ and $Z_4=\frac{6\Delta}{k}\delta$ with $(\alpha,\beta,\delta)\in [0,1]\times [0,1]\times (0,1]$, we define slices
\begin{equation}
\begin{split}
{p_1}_{(\alpha,\beta,\delta)}(Z_1)&:=\mathcal{P}\left(Z_1,\alpha Z_1,\beta Z_1,\frac{6\Delta}{k}\delta\right)\\
&=1-2(Z_1+Z_2+Z_3)-\frac{k+l}{2\Delta}Z_2Z_3Z_4+\frac{l}{2\Delta}Z_1Z_3Z_4+\frac{k}{2\Delta}Z_1Z_2Z_4\\
&=\left(-3\frac{k+l}{k} \alpha\beta\delta +3\frac{l}{k}\beta\delta +3\alpha\delta \right)Z_1^2-2(1+\alpha+\beta)Z_1+1\\
{p_2}_{(\alpha,\beta,\delta)}(Z_1)&:=\mathcal{B}\left(Z_1,\alpha Z_1,\beta Z_1,\frac{6\Delta}{k}\delta\right)\\
&=\left(18\left(\frac{k+l}{k}\right)^2\alpha^2\beta^2\delta^2+18\left(\frac{l}{k}\right)^2\beta^2\delta^2+18\alpha^2\delta^2 \right)Z_1^4\\
&\quad +( 2(1+\alpha^2+\beta^2)-12(\alpha\beta+\alpha+\beta))Z_1^2-2(1+\alpha+\beta)Z_1+2
\end{split}.
\end{equation}
Each pair of slice with fixed $(\alpha,\beta,\delta)$ gives two polynomials in $Z_1$ for us to compare. Since 
$$
{p_1}_{(\alpha,\beta,\delta)}(0)=1,\quad {p_1}_{(\alpha,\beta,\delta)}\left(\frac{1}{2}\right)=-\frac{3}{4}\frac{k+l}{k}\alpha^2\beta^2\delta^2-\left(1-\frac{3}{4}\delta\right)\alpha-\left(1-\frac{3}{4}\frac{l}{k}\delta\right)\beta\leq 0
$$
there is a real positive root for each ${p_1}_{(\alpha,\beta,\delta)}(Z_1)$ in $ \left[0,\frac{1}{2}\right]$. It is clear that for each $(\alpha,\beta,\delta)$, $p_1$ has two real roots, with one be no larger than $\frac{1}{2}$ and the other one no smaller.
Let $\xi(\alpha,\beta,\delta)$ be the smaller positive root for each ${p_1}_{(\alpha,\beta,\delta)}(Z_1)$. Since ${p_2}_{(\alpha,\beta,\delta)}(0)=2>0$, we let $\sigma(\alpha,\beta,\delta)$ denote the smallest positive root for each ${p_2}_{(\alpha,\beta,\delta)}(Z_1)$. Suppose $\sigma(\alpha,\beta,\delta)$ does not exist, then $\mathcal{B}> 0$ for that particular slice. 

\begin{proposition}
\label{prop: sigma larger xi}
For each fixed $(\alpha,\beta,\delta)\in \left[0,1\right]\times [0,1]\times (0,1]$ with $\alpha\geq \beta$ such that $\sigma(\alpha,\beta,\delta)$ exists, we have
$$\xi(\alpha,\beta,\delta)\leq \sigma(\alpha,\beta,\delta).$$
Moreover, the equality holds only at $(\alpha,\beta,\delta)=(1,0,1)$.
\end{proposition}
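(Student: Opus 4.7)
The plan is to mirror the argument of Proposition \ref{each slice is contained}: first verify that equality holds at the exceptional corner $(\alpha, \beta, \delta) = (1, 0, 1)$, then use implicit differentiation to show $\xi < \sigma$ on a punctured neighbourhood of the corner, and finally apply a Sylvester resultant argument to rule out any other coincidence of roots on the full admissible domain $\{0 \leq \beta \leq \alpha \leq 1,\ 0 < \delta \leq 1\}$. The corner $(1, 0, 1)$ corresponds precisely to the critical point $P_0^{(k)}$, which is why the two algebraic surfaces should touch there and nowhere else.

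First I would substitute $(\alpha, \beta, \delta) = (1, 0, 1)$ directly into $p_1$ and $p_2$. This yields $p_1(Z_1) = 3Z_1^2 - 4Z_1 + 1 = (3Z_1-1)(Z_1-1)$, so $\xi(1,0,1) = \tfrac{1}{3}$, and $p_2(Z_1) = 18Z_1^4 - 8Z_1^2 - 4Z_1 + 2$, which a direct check shows has $Z_1 = \tfrac{1}{3}$ as a root, giving $\sigma(1,0,1) = \tfrac{1}{3}$. Next I would implicitly differentiate $p_1(\xi) = 0$ and $p_2(\sigma) = 0$ to compute the first partials of $\xi$ and $\sigma$ at the corner. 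A short calculation shows that the $\alpha$- and $\beta$-partials coincide (both equal to $-\tfrac{1}{6}$ and $-\tfrac{1}{2}$ respectively), whereas $\partial_\delta \xi = \tfrac{1}{6}$ strictly exceeds $\partial_\delta \sigma = \tfrac{1}{15}$. Since the admissible direction in $\delta$ is $\delta \leq 1$, decreasing $\delta$ from $1$ already produces $\sigma - \xi > 0$ at first order; supplementing with the obvious second-order information in $\alpha$ and $\beta$ yields strict inequality on a punctured neighbourhood of $(1,0,1)$.

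For the global step I would argue by contradiction: if $\xi(\alpha,\beta,\delta) = \sigma(\alpha,\beta,\delta)$ at some other interior point of the admissible domain, then $p_1$ and $p_2$ share a positive root in $Z_1$, forcing the Sylvester resultant $r_{(\alpha,\beta,\delta)}(p_1, p_2)$ with respect to $Z_1$ to vanish. This resultant is the determinant of a $6\times 6$ Sylvester matrix producing a polynomial in $(\alpha, \beta, \delta)$ whose coefficients also depend on the auxiliary ratio $l/k \in [0, 1)$. The task then reduces to proving that this resultant is nonnegative on the admissible domain and vanishes only at $(1,0,1)$; once this is in hand, the sign pinned down in the previous paragraph forces $\xi \leq \sigma$ globally, with equality only at the corner.

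The main obstacle will be the sign analysis of the resultant, which is considerably heavier than in Proposition \ref{each slice is contained} because of the extra variable $\delta$, the asymmetry induced by the ordering $\beta \leq \alpha$, and the additional parameter $l/k$. My intended route is to factor out an obvious nonnegative prefactor, then exploit the inequalities $l < k$ and $\beta \leq \alpha \leq 1$ to regroup the remainder as a sum of manifestly nonnegative monomial blocks plus controllable error terms, ultimately reducing the problem to a small number of univariate inequalities on $[0,1]$ that can be checked by elementary means. In direct analogy with the treatment of $r_{(\alpha,\beta)}(q_1, q_2)$ in the preceding subsection, the bulk of the resulting algebra will be deferred to the appendix, leaving the proof body to state the resultant identity and cite the appendix for its nonnegativity.
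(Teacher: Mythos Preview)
Your proposal is correct and follows essentially the same route as the paper: verify $\xi(1,0,1)=\sigma(1,0,1)=\tfrac{1}{3}$, compare first partials by implicit differentiation, handle the degenerate $\alpha\beta$-directions at second order, and then rule out any further coincidence via the Sylvester resultant $r_{(\alpha,\beta,\delta)}(p_1,p_2)$. Two small points of calibration: the ``obvious second-order information'' is in fact an explicit $2\times 2$ Hessian of $\sigma-\xi$ on $\{\delta=1\}$ that depends on $l/k$ and must be checked positive definite, and the paper does not carry out the resultant sign analysis by hand but instead records the (very large) polynomial in the appendix and appeals to Maple for its nonnegativity on $[0,1]^2\times(0,1]$.
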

\begin{proof}
It is clear that $\xi(1,0,1)=\sigma(1,0,1)=\frac{1}{3}$. By implicit differentiation, we have 
$$
\left.\frac{\partial \xi}{\partial\alpha}\right|_{(\alpha,\beta,\delta)=(1,0,1)}=-\frac{1}{6}, 
\quad \left.\frac{\partial \xi}{\partial\beta}\right|_{(\alpha,\beta,\delta)=(1,0,1)}=-\frac{1}{2},
\quad 
\left.\frac{\partial \xi}{\partial\delta}\right|_{(\alpha,\beta,\delta)=(1,0,1)}=\frac{1}{6},
$$
$$
\left.\frac{\partial \sigma}{\partial\alpha}\right|_{(\alpha,\beta,\delta)=(1,0,1)}=-\frac{1}{6}, 
\quad \left.\frac{\partial \sigma}{\partial\beta}\right|_{(\alpha,\beta,\delta)=(1,0,1)}=-\frac{1}{2},
\quad 
\left.\frac{\partial \sigma}{\partial\delta}\right|_{(\alpha,\beta,\delta)=(1,0,1)}=\frac{1}{15}.
$$
Hence $(\sigma-\xi)(\alpha,\beta,\delta)\geq 0$ initially if the direction of derivative as negative $\delta$ component. If the direction of derivative is in the $\alpha\beta$-plane, we consider the function $(\sigma-\xi)(\alpha,\beta):=(\sigma-\xi)(\alpha,\beta,1)$. The Hessian of $(\sigma-\xi)(\alpha,\beta)$ at $(\alpha,\beta)=(1,0)$ is then
$$
\begin{bmatrix}
\frac{11}{60}&\frac{1}{2}-\frac{5k-2l}{12k}\\
\frac{1}{2}-\frac{5k-2l}{12k}&\frac{19}{60}+\frac{(k+l)^2+l^2}{15k^2}
\end{bmatrix}.
$$
The determinant is $\frac{19k^2-kl-l^2}{300k^2}\geq \frac{17}{300}$. Hence $\xi\leq \sigma$ in a small neighborhood around $(\alpha,\beta,\delta)=(1,0,1)$ in $\left[0,1\right]\times [0,1]\times (0,1]$.

Suppose $\sigma=\xi$ for some $(\alpha,\beta,\delta)$, then the resultant of of $p_1$ and $p_2$ vanishes at that point. The formula of $r_{(\alpha,\beta,\delta)}(p_1,p_2)$ is presented in the Appendix. With the help of Maple, we know that $r_{(\alpha,\beta,\delta)}(p_1,p_2)$ is non-negative in the region $[0,1]\times[0,1]\times (0,1]$ and it only vanishes at $(1,0,1)$ if $(k,l)\neq (1,1)$. If $(k,l)=(1,1)$, then $r_{(\alpha,\beta,\delta)}(p_1,p_2)$ vanishes only at $(1,0,1)$ and $(1,1,0)$. Therefore $\sigma\geq \xi$ for all $(\alpha,\beta,\delta)\in [0,1]\times[0,1]\times (0,1]$.
\end{proof}

Hence we have the following lemma.
\begin{lemma}
\label{lem: invariant set}
The set $\check{\mathcal{T}_k}$ is invariant.
\end{lemma}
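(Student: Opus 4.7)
The plan is to mirror the structure of the earlier invariance proof of $\check{\mathcal{S}}$. First I would inspect each boundary face of $\check{\mathcal{T}_k}$ and verify that $V$ points inward or is tangent. The faces $\{Z_1=0\}$, $\{Z_2=0\}$, $\{Z_3=0\}$ are invariant because every $Z_i$-equation in \eqref{eqn: new Einstein equation} carries $Z_i$ as a factor; the face $\{X_4=0\}$ is invariant by Proposition \ref{prop: X4 not negative}; and the faces $\{Z_1-Z_2=0\}$ and $\{Z_2-Z_3=0\}$ are invariant by Proposition \ref{prop: smaller Tk}. The upper bound $Z_1+Z_2+Z_3\leq \tfrac{2}{3}$ is a component separation rather than a flow constraint: by Proposition \ref{prop: Tk is compact} an integral curve staying in $\mathcal{T}_k$ cannot jump the gap to $\{Z_1+Z_2+Z_3\geq 2\}$.

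The essential face is $Z_4=\tfrac{6\Delta}{k}$. A direct computation gives
\begin{equation}
\left.\langle \nabla Z_4, V\rangle\right|_{Z_4=\frac{6\Delta}{k}} \;=\; -\frac{6\Delta}{k}\bigl(\mathcal{G}-X_4\bigr),
\end{equation}
so I need $\mathcal{G}-X_4\geq 0$ on $\check{\mathcal{T}_k}$. Combining \eqref{eqn: new conservation} with Proposition \ref{prop: X_4 and Zi} rewrites $\mathcal{G}-X_4$ as the polynomial $\mathcal{B}$; unlike \eqref{eqn: G-X4}, here no non-negative terms are discarded and the equality $\mathcal{G}-X_4=\mathcal{B}$ is exact. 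Hence the invariance question reduces to proving $\mathcal{B}\geq 0$ on $\check{\mathcal{T}_k}$.

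For this I would use the ray slicing $Z_2=\alpha Z_1,\; Z_3=\beta Z_1,\; Z_4=\tfrac{6\Delta}{k}\delta$ with $(\alpha,\beta,\delta)\in [0,1]\times[0,1]\times(0,1]$ and $\alpha\geq \beta$, which is legitimate because $Z_1\geq Z_2\geq Z_3\geq 0$ in $\check{\mathcal{T}_k}$ by Proposition \ref{prop: smaller Tk} and $Z_1\leq \tfrac{1}{2}$ by Remark \ref{rem: sharper Z1}. On each such ray the conservation law \eqref{eqn: new opposite spin(7) conservation} becomes $p_1(Z_1)=0$; since $p_1(0)=1>0$ and $p_1(\tfrac{1}{2})\leq 0$, the value of $Z_1$ realized in $\check{\mathcal{T}_k}$ is precisely $\xi(\alpha,\beta,\delta)$. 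Meanwhile $\mathcal{B}=p_2(Z_1)$ and $p_2(0)=2>0$, so $\mathcal{B}\geq 0$ at the conservation point is equivalent to $\xi\leq \sigma$ (or to $\sigma$ not existing). This is exactly Proposition \ref{prop: sigma larger xi}.

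Finally I would rule out non-transverse crossings on the face $Z_4=\tfrac{6\Delta}{k}$. A crossing requires $\mathcal{G}-X_4=\mathcal{B}=0$, hence $\xi=\sigma$; by Proposition \ref{prop: sigma larger xi} this forces $(\alpha,\beta,\delta)=(1,0,1)$, and with $\xi=\tfrac{1}{3}$ the point is pinned to the critical point $P_0^{(k)}$. In the special case $(k,l)=(1,1)$ the extra coincidence at $(1,1,0)$ sits on the invariant face $\{Z_4=0\}$ and contributes no escape either. The main obstacle in the whole argument is the algebraic non-negativity of the resultant $r_{(\alpha,\beta,\delta)}(p_1,p_2)$ underlying Proposition \ref{prop: sigma larger xi}, which has already been delegated to the appendix; the remaining work is the bookkeeping above.
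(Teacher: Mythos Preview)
Your proposal is correct and follows essentially the same route as the paper: reduce the invariance of $\check{\mathcal{T}_k}$ to showing $\mathcal{G}-X_4\ge 0$ on the face $Z_4=\tfrac{6\Delta}{k}$, identify $\mathcal{G}-X_4$ with $\mathcal{B}$ via \eqref{eqn: new conservation} and Proposition~\ref{prop: X_4 and Zi}, apply the ray slicing $Z_2=\alpha Z_1,\ Z_3=\beta Z_1,\ Z_4=\tfrac{6\Delta}{k}\delta$ together with Proposition~\ref{prop: sigma larger xi}, and finish by pinning any non-transverse crossing to $P_0^{(k)}$. Your write-up is in fact a bit more explicit than the paper's about the remaining boundary faces; one small wording point is that Proposition~\ref{prop: smaller Tk} shows the half-spaces $\{Z_1-Z_2\ge 0\}$ and $\{Z_2-Z_3\ge 0\}$ are flow-invariant (the field points inward at the faces), not that the hyperplanes themselves are invariant sets.
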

\begin{proof}
We have the following chain of inclusions:
\begin{equation}
\begin{split}
\check{\mathcal{T}_k}&\subset \left\{0\leq Z_3\leq Z_2\leq Z_1\leq \frac{1}{2},\quad 0< Z_4\leq \frac{6\Delta}{k},\quad \mathcal{P}= 0 \right\}\quad \text{by Proposition \ref{prop: Tk is compact} and Proposition \ref{prop: smaller Tk}}\\
&= \bigcup_{(\alpha,\beta,\delta)\in [0,1]\times [0,1]\times (0,1]} \left\{\left(Z_1,\alpha Z_1,\beta Z_1,\frac{6\Delta}{k}\delta \right)\mid Z_1= \xi(\alpha,\beta,\delta)\right\}\\
&\subset \bigcup_{(\alpha,\beta,\delta)\in [0,1]\times [0,1]\times (0,1]} \left\{\left(Z_1,\alpha Z_1,\beta Z_1,\frac{6\Delta}{k}\delta \right)\mid 0\leq Z_1 \leq \sigma (\alpha,\beta,\delta)\right\}\quad \text{by Proposition \ref{prop: sigma larger xi}}\\
&\subset \left\{\mathcal{G}-X_4\geq 0\right\}
\end{split}.
\end{equation}

Similar to the argument in proving Lemma \ref{lem: invariant tilde S}, the non-transverse crossing cannot emerge in $\check{\mathcal{T}_k}$ as the crossing point can only be the critical point $P_0^{(k)}$. Hence $\check{\mathcal{T}_k}$ is a compact invariant set.
\end{proof}

From Section \ref{sec: critical points}, it is clear that $\gamma^{(k+l)}_{(s_1,s_2)}$ and $\gamma^{(k)}_{(s_1,s_2)}$ are in the set $\check{\mathcal{S}}$ and $\check{\mathcal{T}_k}$ initially if $s_1,s_2>0$.
Hence we have the following lemma.
\begin{lemma}
\label{lem: generic long existing}
Metrics represented by $\gamma^{(k+l)}_{(s_1,s_2)}$ on $M_{k,l}^{(k+l)}$ with $s_1,s_2>0$ are forward complete. Metrics represented by $\gamma^{(k)}_{(s_1,s_2)}$ on $M_{k,l}^{(k)}$ with $s_1,s_2>0$ are forward complete.
\end{lemma}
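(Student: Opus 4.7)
The plan is to combine the compact invariance results already established with the local analysis of Section~\ref{sec: critical points}. The final lemma collapses into two claims: the integral curves $\gamma^{(k+l)}_{(s_1,s_2)}$ and $\gamma^{(k)}_{(s_1,s_2)}$ enter $\check{\mathcal{S}}$ and $\check{\mathcal{T}_k}$ respectively for small $\eta$, and once inside a compact invariant set they persist for all positive $\eta$, at which point the correspondence between the $\eta$-coordinate and the arclength coordinate forces forward completeness.

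First I would verify the initial entry. At $P_0^{(k+l)}$ the point already sits on the boundary $\{Z_4=\tfrac{6\Delta}{k+l}\}\cap\{Z_2+Z_3=\tfrac{2}{3}\}$ of $\check{\mathcal{S}}$. Reading off the eigenvectors $v_1, v_2$ tangent to $\mathcal{C}^+_{\spin(7)}$ listed in Section~\ref{sec: critical points}, both have strictly negative $Z_2+Z_3$ contributions ($-2$ and $-9(k+l)$ respectively), and $v_1$ carries a strictly negative $Z_4$ contribution ($-36\Delta/(k+l)$), while $v_2$ preserves $Z_4$. Hence with $s_1,s_2>0$ the linearized trajectory \eqref{eqn: linearized solution k+l spin(7)} satisfies $Z_4<\tfrac{6\Delta}{k+l}$ and $Z_2+Z_3<\tfrac{2}{3}$ for all small $\eta>-\infty$, placing $\gamma^{(k+l)}_{(s_1,s_2)}$ inside $\check{\mathcal{S}}$ past some $\eta_0$. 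The analogous check at $P_0^{(k)}$ using the corresponding $v_1,v_2$ yields negative contributions to $Z_1+Z_2+Z_3$ ($-2$ from $v_1$, $-6k$ from $v_2$), a strictly negative $Z_4$ contribution from $v_1$, and $Z_1-Z_2$ contribution $k+2l>0$ from $v_2$ together with $Z_2-Z_3$ remaining positive; so $\gamma^{(k)}_{(s_1,s_2)}$ enters $\check{\mathcal{T}_k}$ as soon as $s_1,s_2>0$.

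Next, Lemma~\ref{lem: invariant set} states that $\check{\mathcal{S}}$ (and the redefined $\check{\mathcal{T}_k}$) are compact and invariant under \eqref{eqn: new Einstein equation}. Therefore once $\gamma^{(k+l)}_{(s_1,s_2)}$ enters $\check{\mathcal{S}}$ at some $\eta_0$, the integral curve cannot escape, so the vector field remains bounded on the solution and no finite-time blow-up occurs; the curve extends to all $\eta\in[\eta_0,\infty)$. The identical argument gives the extension to $\eta\in[\eta_0,\infty)$ for $\gamma^{(k)}_{(s_1,s_2)}$. Finally, invoking the result cited after \eqref{eqn: new conservation set} from \cite{buzano_family_2015} (Lemma 5.1), the relation between the original time $t$ and the new parameter $\eta$ gives $\lim_{\eta\to\infty} t=\infty$, and thus both families of metrics are forward complete.

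I expect the essentially nontrivial step to have been absorbed into the preceding invariance lemmas; the remaining content is the bookkeeping of signs in the linearized solutions. The only subtle point is confirming that the open condition $s_1,s_2>0$ is precisely what places the trajectory strictly inside $\check{\mathcal{S}}$ and $\check{\mathcal{T}_k}$ rather than merely on their boundaries, which is why the statement excludes the boundary parameters.
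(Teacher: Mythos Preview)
Your proposal is correct and follows the same approach as the paper. The paper's own argument is in fact the single sentence preceding the lemma (``From Section~\ref{sec: critical points}, it is clear that $\gamma^{(k+l)}_{(s_1,s_2)}$ and $\gamma^{(k)}_{(s_1,s_2)}$ are in the set $\check{\mathcal{S}}$ and $\check{\mathcal{T}_k}$ initially if $s_1,s_2>0$''), together with the earlier remark that integral curves defined on all of $\mathbb{R}$ yield forward complete metrics via Lemma~5.1 of \cite{buzano_family_2015}; you have simply spelled out the eigenvector sign-checking that the paper leaves implicit.
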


The global existence mentioned in Theorem \ref{thm: main1} is proven.
Our method of proving forward completeness also works on the cohomogeneity one space with exceptional $N_{1,0}$ and $N_{1,1}$ as principal orbit. For $N_{1,0}$, we obtain two continuous 1-parameter families of $\spin(7)$ metrics with both chiralities on the same manifold $M_{1,0}^{(1)}$. They are part of the more complete family in \cite{lehmann_geometric_2020}. For $N_{1,1}$, $M_{k,l}^{(k)}$ and $M_{k,l}^{(l)}$ are equivalent. In that case, we obtain two continuous 1-parameter families of $\spin(7)$ metrics, one on $M_{1,1}^{(2)}$ and the other on $M_{1,1}^{(1)}$. However, such a family does not contain any AC metric and it is a part of the more complete family in Theorem \ref{thm: main2}.

\subsection{Global Metrics on $M_{1,1}^{(2)}$ and $M_{1,1}^{(1)}$}
In this section we prove Theorem \ref{thm: main2}. 
It turns out that for the case where $N_{1,1}$ is the principal orbit, one can have a simpler construction with a larger family of $\spin(7)$ metrics. On $M_{1,1}^{(2)}$, We recover the explicit solution in \cite{cvetic_hyper-kahler_2001} and part of the solution in \cite{bazaikin_new_2007}. On $M_{1,1}^{(1)}$, we construct a new continuous 1-parameter family of $\spin(7)$ metric that has geometric transition from the AC Calabi HyperK\"ahler metric to ALC metrics. Such a family was conjectured in \cite{kanno_spin7_2002-2}.

Recall that in \cite{cvetic_hyper-kahler_2001}, one has an explicit solution to \eqref{eqn: original spin(7) equation} with $(k,l)=(1,1)$. Specifically, one can impose 
$$
b(t)=c(t)=\frac{f(t)}{2},\quad \dot{a}=1-\frac{a^2}{b^2}
$$
for all $t$ and obtain a subsystem of \eqref{eqn: original spin(7) equation}. In the new coordinate, such a subsystem is translated to the invariant subset
$$
\mathcal{C}^+_{\spin(7)}\cap\{X_2-X_3=0,\quad Z_2-Z_3=0\}\cap\{\sqrt{Z_2Z_3}Z_4-3=0\}.
$$
Note that this is a 1-dimensional invariant set and critical points $P_{AC-2}=\left(\frac{1}{7},\frac{1}{7},\frac{1}{7},\frac{1}{7},\frac{2}{21},\frac{5}{21},\frac{5}{21},\frac{63}{5}\right)$ and $P_0^{(2)}$ are contained in it. Hence the example in \cite{cvetic_hyper-kahler_2001} is transformed to an algebraic curve in the new coordinate. Then we construct the following compact invariant set
\begin{lemma}
\label{lem: invariant tilde S}
The set $\tilde{\mathcal{S}}:=\mathcal{C}^+_{\spin(7)}\cap \{X_2=X_3,Z_2=Z_3\}\cap \{\sqrt{Z_2Z_3}Z_4-3\leq 0\}$
is compact and invariant.
\end{lemma}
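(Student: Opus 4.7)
The plan has two pieces: showing invariance of the symmetric locus plus the bound $Z_2Z_4\leq 3$, and then compactness. For the first, note that with $(k,l)=(1,1)$ the vector field \eqref{eqn: new Einstein equation} and the algebraic polynomials \eqref{eqn: new spin(7) equation} are manifestly symmetric under the involution swapping $(X_2,Z_2)\leftrightarrow(X_3,Z_3)$, so $\{Z_2=Z_3\}$ is invariant; moreover on $\mathcal{C}^+_{\spin(7)}$, the equation $F_2-F_3=0$ together with $Z_2=Z_3$ forces $X_2=X_3$ algebraically. Hence I only need to work on the reduced locus, where \eqref{eqn: new spin(7) equation} specializes (with $\Delta=3$) to
\[
X_1=-Z_1+2Z_2-\tfrac{1}{3}Z_2^2Z_4,\qquad X_4=\tfrac{1}{3}Z_2Z_4(Z_2-Z_1).
\]

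The key computation is the evolution of $Z_2Z_4$. Using $Z_2'=Z_2(\mathcal{G}+X_2-X_3-X_1)=Z_2(\mathcal{G}-X_1)$ on the symmetric locus and $Z_4'=Z_4(X_4-\mathcal{G})$, one gets
\[
(Z_2Z_4)'=Z_2Z_4(X_4-X_1)=Z_2Z_4\Bigl[(Z_1-2Z_2)+\tfrac{1}{3}(2Z_2-Z_1)Z_2Z_4\Bigr].
\]
Plugging $Z_2Z_4=3$ collapses the bracket identically to zero. Thus the boundary $\{\sqrt{Z_2Z_3}Z_4=3\}\cap\{Z_2=Z_3\}\cap\mathcal{C}^+_{\spin(7)}$ is itself flow-invariant — in fact it is the one-dimensional algebraic curve carrying the explicit Cvetič--HyperKähler solution mentioned just before the lemma — and uniqueness of ODE solutions then prevents any trajectory in the interior $\{Z_2Z_4<3\}$ from crossing into $\{Z_2Z_4>3\}$. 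Hence $\tilde{\mathcal{S}}$ is invariant.

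For compactness, specialize the algebraic conservation law \eqref{eqn: conservation Z spin(7)} to $Z_2=Z_3$ and $(k,l)=(1,1)$ to get
\[
2Z_1+4Z_2+\tfrac{1}{3}(Z_1-Z_2)Z_2Z_4=1.
\]
Proposition \ref{prop: X4 not negative}, applied to the expression above for $X_4$, yields $Z_2\geq Z_1$ whenever $Z_2Z_4>0$. Together with $Z_2Z_4\leq 3$, the conservation law then forces $3(Z_1+Z_2)\leq 1$, so $Z_1\leq\tfrac{1}{6}$ and $Z_2\leq\tfrac{1}{3}$; on the boundary stratum $Z_2Z_4=3$ the same identity sharpens to $Z_1+Z_2=\tfrac{1}{3}$, giving $Z_2\in[\tfrac{1}{6},\tfrac{1}{3}]$ and $Z_4\in[9,18]$. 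Once the $Z_j$'s are bounded, the $X_i$'s are bounded polynomial expressions in them. The main delicate point — analogous to the $\mathcal{S}=\check{\mathcal{S}}\sqcup\hat{\mathcal{S}}$ split in Proposition \ref{prop: S is compact} — is that $Z_2Z_4\leq 3$ alone permits $Z_4\to\infty$ along the degenerate stratum $\{Z_2=0\}$, so compactness must be asserted on the relevant connected component of $\tilde{\mathcal{S}}$ containing $P_0^{(2)}$, where the estimates above show $Z_2$ is uniformly bounded away from $0$.
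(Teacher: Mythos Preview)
Your invariance argument matches the paper's: both compute $(\sqrt{Z_2Z_3}\,Z_4)'=\sqrt{Z_2Z_3}\,Z_4\,(X_4-X_1)$ on the symmetric locus and use that $3(X_4-X_1)=(Z_2Z_4-3)(2Z_2-Z_1)$ vanishes on $\{Z_2Z_4=3\}$, so that boundary is itself an integral curve.

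For compactness you correctly flag an issue the paper glosses over: the paper simply asserts that ``$Z_4$ is bounded above by \eqref{eqn: conservation Z spin(7)}'' once $Z_1,Z_2,Z_3$ are bounded, but that conservation law places no constraint on $Z_4$ when $Z_2=Z_3=0$, and in fact the entire ray $\bigl(-\tfrac12,\tfrac12,\tfrac12,0,\tfrac12,0,0,Z_4\bigr)$ with $Z_4\ge0$ lies in $\tilde{\mathcal S}$. Your proposed repair --- pass to the connected component containing $P_0^{(2)}$ --- does not work either, because $\tilde{\mathcal S}$ is connected: the $\{Z_4=0\}$ segment $\{2Z_1+4Z_2=1,\ 0\le Z_2\le\tfrac14\}$ (on which the implication $X_4\ge0\Rightarrow Z_2\ge Z_1$ breaks down, so your bound $Z_2\ge\tfrac16$ fails) joins the main body at $P_1$ to the endpoint $(Z_1,Z_2,Z_4)=(\tfrac12,0,0)$, which in turn meets that unbounded ray. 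The clean fix is to observe that both $\{Z_2=Z_3=0\}$ and $\{Z_4=0\}$ are themselves flow-invariant, so every $\gamma^{(2)}_{(s_1,s_2)}$ --- which starts with $Z_2,Z_4>0$ --- remains in $\tilde{\mathcal S}\cap\{Z_2>0,\,Z_4>0\}$; there your estimates do give $Z_2\ge\tfrac16$, hence $Z_4\le 18$, and the closure of this open piece is the compact invariant set actually needed for Lemma~\ref{lem: exceptional long existing}.
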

\begin{proof}
Since $Z_2=Z_3$ in $\tilde{\mathcal{S}}$, we have 
\begin{equation}
\begin{split}
1&=2(Z_1+Z_2+Z_3)-\left(\frac{1}{3}Z_2Z_3Z_4-\frac{1}{6}Z_1Z_3Z_4-\frac{1}{6}Z_1Z_2Z_4\right)\\
&= 2(Z_1+2Z_2)-\left(\frac{1}{3}Z_2-\frac{1}{3}Z_1\right)Z_2Z_4\quad \text{since $Z_2=Z_3$}
\end{split}
\end{equation}
from \eqref{eqn: conservation Z spin(7)}.
Since $X_4\geq 0$ by Proposition \ref{prop: X4 not negative}, 
we know that $Z_2-Z_1\geq 0$. Hence computation above continues as
\begin{equation}
\begin{split}
1&\geq 2(Z_1+2Z_2)-\left(Z_2-\frac{1}{2}Z_1-\frac{1}{2}Z_1\right)\\
&=3Z_1+3Z_2
\end{split}.
\end{equation}
Hence $Z_1$,$Z_2$ and $Z_3$ are bounded above. Therefore $Z_4$ is bounded above by \eqref{eqn: conservation Z spin(7)}. Then all $X_i$'s are bounded. Hence $\tilde{\mathcal{S}}$ is compact.

It is clear that $\mathcal{C}^+_{\spin(7)}\cap \{X_2=X_3,Z_2=Z_3\}$ is invariant. In the subset $\mathcal{C}^+_{\spin(7)}\cap \{X_2=X_3,Z_2=Z_3\}\cap \{\sqrt{Z_2Z_3}Z_4-3\leq 0\}$, we have 
\begin{equation}
\label{eqn: derivative of sqrtZ2Z3Z4}
\begin{split}
&\left.\langle\nabla (\sqrt{Z_2Z_3}Z_4-3), V\rangle\right|_{\sqrt{Z_2Z_3}Z_4-3=0}\\
&=\sqrt{Z_2Z_3}Z_4(X_4-X_1)\\
&=3(X_4-X_1)\\
&=3\left(\left(\frac{1}{3}Z_2^2Z_4-\frac{1}{3}Z_1Z_2Z_4\right)-\left(2Z_2-Z_1-\frac{1}{3}Z_2^2Z_4\right)\right)\\
&=\left(Z_2Z_4-3\right)(2Z_2-Z_1)\\
&\leq 0
\end{split}
\end{equation}
Hence the proof is complete.
\end{proof}
By \eqref{eqn: linearized solution k+l spin(7)}, we know that $\gamma^{(2)}_{(s_1,s_2)}$ is tangent to $\partial \tilde{\mathcal{S}}$ if $(s_1,s_2)=\left(-\frac{3}{\sqrt{10}},\frac{1}{\sqrt{10}}\right)$ and $\gamma^{(2)}_{(s_1,s_2)}$ is in the interior of $\tilde{\mathcal{S}}$ initially if $s_1>-\frac{3}{\sqrt{10}}$. Hence $\left\{\gamma^{(2)}_{(s_1,s_2)}\mid (s_1,s_2)\in\mathbb{S}^1, s_1\geq -\frac{3}{\sqrt{10}}\right\}$ is a 1-parameter family of $\spin(7)$ metrics on $M_{1,1}^{(2)}$, and $\gamma^{(2)}_{\left(-\frac{3}{\sqrt{10}},\frac{1}{\sqrt{10}}\right)}$ is an AC metric.

\begin{remark}
\label{rem: A8}
Note that for $N_{1,1}$, one can easily show that $\mathcal{C}_{RF}\cap\{X_2=X_3,Z_2=Z_3\}$ is invariant. The restricted system of \eqref{eqn: new Einstein equation} on $\mathcal{C}_{RF}\cap\{X_2=X_3,Z_2=Z_3\}$ is essentially the same as the one that appears in \cite{chi_einstein_2020}, where a 2-parameter family of non-positive Einstein metrics are constructed, with all Ricci-flat metrics being $\spin(7)$. Metrics represented by $\gamma^{(2)}_{(s_1,s_2)}$ is nothing new but the geometric analogy to $\mathbb{B}_8$ metrics in \cite{cvetic_cohomogeneity_2002} and \cite{cvetic_new_2002}, belonging to the strictly larger solution set obtained in \cite{bazaikin_noncompact_2008}. The AC metric $\gamma^{(2)}_{\left(-\frac{3}{\sqrt{10}},\frac{1}{\sqrt{10}}\right)}$ is the geometric analogy to the $\spin(7)$ metric in \cite{bryant_construction_1989} and \cite{gibbons_einstein_1990}.
\end{remark}

By mimicking the example in \cite{cvetic_hyper-kahler_2001}, we are able to describe the AC Calabi HyperK\"ahler metric on $M_{1,1}^{(1)}$ as an algebraic curve. In fact, integral curve that represents the AC metric sits on the boundary of a compact invariant subset of $\mathcal{C}^-_{\spin(7)}$.
\begin{lemma}
The set
$$\tilde{\mathcal{T}}:=\mathcal{C}^-_{\spin(7)}\cap\{Z_2+Z_3-Z_1\geq 0\}\cap\{Z_2Z_4+Z_3Z_4-6\leq 0\}$$
is compact and invariant. Moreover, the boundary of $\tilde{\mathcal{T}}$ where both equality hold is an integral curve.
\end{lemma}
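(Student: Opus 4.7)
The plan is to handle compactness, invariance of each boundary face, and the integral-curve structure of the corner separately, using the $(k,l)=(1,1)$ specialization of the conservation law on $\mathcal{C}^-_{\spin(7)}$ throughout.

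For compactness, the key input is $X_4\ge 0$ from Proposition \ref{prop: X4 not negative}; unwound via $H_4=0$ in \eqref{eqn: new opposite spin(7) equation}, this gives $Z_1(Z_2+Z_3)\ge 2Z_2Z_3$. With $\Delta=3$ the conservation law \eqref{eqn: new opposite spin(7) conservation} reads
\begin{equation*}
2(Z_1+Z_2+Z_3)+\frac{Z_4}{6}\bigl(2Z_2Z_3-Z_1(Z_2+Z_3)\bigr)=1,
\end{equation*}
so non-positivity of the second term forces $Z_1+Z_2+Z_3\ge \tfrac{1}{2}$. Combining with the defining inequality $Z_1\le Z_2+Z_3$ yields $Z_2+Z_3\ge \tfrac{1}{4}$, and the second defining inequality $(Z_2+Z_3)Z_4\le 6$ then bounds $Z_4\le 24$. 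Upper bounds on the $Z_i$'s follow from the reverse estimate $\frac{Z_4}{6}Z_1(Z_2+Z_3)\le Z_1$, which substituted into the conservation law gives $Z_1+2(Z_2+Z_3)\le 1$. Hence $\tilde{\mathcal{T}}$ is bounded and, being closed, compact.

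For invariance, I would compute the derivative of each defining function along the vector field $V$ on the corresponding boundary face, substituting $X_1,X_2,X_3$ via \eqref{eqn: new opposite spin(7) equation}. Writing $s=Z_2+Z_3$, I expect the computation to collapse to the product factorizations
\begin{equation*}
\left.(Z_2+Z_3-Z_1)'\right|_{Z_1=s}=\frac{4Z_2Z_3}{3}(6-sZ_4),\qquad \left.(sZ_4-6)'\right|_{sZ_4=6}=(Z_1-s)\left(12+\frac{6(Z_2-Z_3)^2}{s^2}\right),
\end{equation*}
the first being $\ge 0$ and the second $\le 0$ throughout $\tilde{\mathcal{T}}$. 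Non-transverse crossings can occur only where these derivatives vanish simultaneously, which forces either $Z_2Z_3=0$ (contained in the invariant subvarieties $\{Z_2=0\}$ or $\{Z_3=0\}$) or the corner $\{Z_1=s\}\cap\{sZ_4=6\}$, so no crossing escapes $\tilde{\mathcal{T}}$.

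Finally, both boundary derivatives vanish identically on the corner, so $V$ is tangent there. Since the corner is cut from the 3-dimensional $\mathcal{C}^-_{\spin(7)}$ by two functionally independent relations it is 1-dimensional, and tangency of $V$ implies it consists of flow orbits joined at critical points (one verifies that $P_{AC-1}$, $P_0^{(k)}$, and $P_0^{(l)}$ all satisfy both boundary equations). The main obstacle will be the polynomial bookkeeping in the two boundary-derivative computations; I expect that the change of variables $s=Z_2+Z_3$, $d=Z_2-Z_3$ will expose the clean product factorizations displayed above, but cancellations must be tracked carefully when expanding $V$ under the $\mathcal{C}^-_{\spin(7)}$ constraint.
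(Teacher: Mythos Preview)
Your proposal is correct and follows essentially the same route as the paper: compactness via the $(k,l)=(1,1)$ conservation law together with the defining inequalities, then sign checks on the two boundary derivatives after substituting $X_i$ from \eqref{eqn: new opposite spin(7) equation}. The only cosmetic differences are that the paper bounds the products $Z_iZ_4\le 6$ first (rather than invoking $X_4\ge 0$ to bound $Z_4$ directly), and on the face $\{Z_1=Z_2+Z_3\}$ it applies the estimate $Z_1Z_4\le 6$ before factoring whereas you obtain the exact factorization $\tfrac{4}{3}Z_2Z_3\bigl(6-(Z_2+Z_3)Z_4\bigr)$; your second boundary factorization agrees with the paper's $-\tfrac{6}{s^2}(Z_2+Z_3-Z_1)\bigl((Z_2+Z_3)^2+2Z_2^2+2Z_3^2\bigr)$ upon expanding.
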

\begin{proof}
Since $Z_1Z_4\leq Z_2Z_4+Z_3Z_4\leq 6$ in $\tilde{\mathcal{T}}$, each one of $Z_1Z_4$, $Z_2Z_4$ and $Z_3Z_4$ is bounded. By \eqref{eqn: new opposite spin(7) conservation}, we know that all $Z_i$'s are bounded. Then all variables are bounded. The compactness is clear.

We have 
\begin{equation}
\begin{split}
&\left.\left\langle\nabla \left(Z_2+Z_3-Z_1\right), V\right\rangle\right|_{Z_2+Z_3-Z_1=0}\\
&=(Z_2+Z_3-Z_1)\mathcal{G}+Z_2(X_2-X_1-X_3)+Z_3(X_3-X_1-X_2)-Z_1(X_1-X_2-X_3)\\
&=2Z_2(X_2-X_1)+2Z_3(X_3-X_1)\quad \text{on replacing $Z_1$ with $Z_2+Z_3$}\\
&=2Z_2\left(2(Z_1-Z_2)-\frac{1}{3}Z_2Z_3Z_4-\frac{1}{6}Z_1Z_3Z_4\right)+2Z_3\left(2(Z_1-Z_3)-\frac{1}{3}Z_2Z_3Z_4-\frac{1}{6}Z_1Z_2Z_4\right)\\
&\quad \text{replace $X_i$'s using \eqref{eqn: new opposite spin(7) equation}}\\
&=2Z_2\left(2Z_3-\frac{1}{3}Z_2Z_3Z_4-\frac{1}{6}Z_1Z_3Z_4\right)+2Z_3\left(2Z_2-\frac{1}{3}Z_2Z_3Z_4-\frac{1}{6}Z_1Z_2Z_4\right)\\
&\quad \text{since $Z_2+Z_3=Z_1$}
\end{split}.
\end{equation}
Since $Z_1Z_4\leq Z_2Z_4+Z_3Z_4\leq 6$ in $\tilde{\mathcal{T}}$, computation above continues as
\begin{equation}
\begin{split}
&\left.\left\langle\nabla \left(Z_2+Z_3-Z_1\right), V\right\rangle\right|_{Z_2+Z_3-Z_1=0}\\
&\geq 2Z_2\left(2Z_3-\frac{1}{3}Z_2Z_3Z_4-Z_3\right)+2Z_3\left(2Z_2-\frac{1}{3}Z_2Z_3Z_4-Z_2\right)\\
&=Z_2Z_3\left(4-\frac{2}{3}Z_4(Z_2+Z_3)\right)\\
&\geq 0
\end{split}.
\end{equation}
On the other hand, we have 
\begin{equation}
\begin{split}
&\left.\left\langle\nabla (Z_2Z_4+Z_3Z_4-6), V\right\rangle\right|_{Z_2Z_4+Z_3Z_4-6=0}\\
&=Z_2Z_4(X_2+X_4-X_1-X_3)+Z_3Z_4(X_3+X_4-X_1-X_2)\\
&=Z_2Z_4\left(3Z_1+3Z_3-Z_2-1+\frac{1}{6}Z_1Z_2Z_4-\frac{1}{3}Z_2Z_3Z_4-\frac{1}{6}Z_1Z_3Z_4\right)\\
&\quad + Z_3Z_4\left(3Z_1+3Z_2-Z_3-1+\frac{1}{6}Z_1Z_3Z_4-\frac{1}{3}Z_2Z_3Z_4-\frac{1}{6}Z_1Z_2Z_4\right)\\
&\quad \text{replace $X_1$, $X_2$ and $X_3$ using \eqref{eqn: new opposite spin(7) equation} and apply Proposition \ref{prop: X_4 and Zi}}\\
&=\frac{6}{(Z_2+Z_3)^2}Z_2\left((3Z_1+3Z_3-Z_2-1)(Z_2+Z_3)+Z_1Z_2-2Z_2Z_3-Z_1Z_3\right)\\
&\quad + \frac{6}{(Z_2+Z_3)^2}Z_3\left((3Z_1+3Z_2-Z_3-1)(Z_2+Z_3)+Z_1Z_3-2Z_2Z_3-Z_1Z_2\right)\\
&\quad \text{since $Z_2Z_4+Z_3Z_4=6$}\\
&=\frac{3}{(Z_2+Z_3)^2}(Z_2+Z_3)\bigg((Z_2+Z_3)(6Z_1+2Z_2+2Z_3-2)-4Z_2Z_3\bigg)\\
&\quad +\frac{3}{(Z_2+Z_3)^2}(Z_2-Z_3)\bigg((Z_2+Z_3)(4Z_3-4Z_2)+2Z_1(Z_2-Z_3)\bigg)\\
&\quad \text{apply identity $ab+cd=\frac{1}{2}(a+c)(b+d)+\frac{1}{2}(a-c)(b-d)$}\\
&=\frac{3}{(Z_2+Z_3)^2}(Z_2+Z_3)\bigg((Z_2+Z_3)(2Z_1+4Z_2+4Z_3-2)+4Z_2Z_3+(Z_2+Z_3)(4Z_1-2Z_2-2Z_3)-8Z_2Z_3\bigg)\\
&\quad +\frac{3}{(Z_2+Z_3)^2}(Z_2-Z_3)\bigg((Z_2+Z_3)(4Z_3-4Z_2)+2Z_1(Z_2-Z_3)\bigg)\\
\end{split}.
\end{equation}
By multiplying both hand side of last equality in \eqref{eqn: new opposite spin(7) conservation} by $2(Z_2+Z_3)$, we know that if $Z_2Z_4+Z_3Z_4=6$,  
\begin{equation}
\begin{split}
2(Z_2+Z_3)&=4(Z_1+Z_2+Z_3)(Z_2+Z_3)+\frac{2}{3}Z_2Z_3Z_4(Z_2+Z_3)-\frac{2}{6}Z_1(Z_2Z_4+Z_3Z_4)(Z_2+Z_3)\\
&=4(Z_1+Z_2+Z_3)(Z_2+Z_3)+4Z_2Z_3-2Z_1(Z_2+Z_3)\\
&=(2Z_1+4Z_2+4Z_3)(Z_2+Z_3)+4Z_2Z_3
\end{split}.
\end{equation}
Hence we have 
\begin{equation}
\begin{split}
&\left.\left\langle\nabla (Z_2Z_4+Z_3Z_4-6), V\right\rangle\right|_{Z_2Z_4+Z_3Z_4-6=0}\\
&=\frac{3}{(Z_2+Z_3)^2}(Z_2+Z_3)\bigg((Z_2+Z_3)(4Z_1-2Z_2-2Z_3)-8Z_2Z_3\bigg)\\
&\quad +\frac{3}{(Z_2+Z_3)^2}(Z_2-Z_3)\bigg((Z_2+Z_3)(4Z_3-4Z_2)+2Z_1(Z_2-Z_3)\bigg)\\
&=-\frac{6}{(Z_2+Z_3)^2}(Z_2+Z_3-Z_1)((Z_2+Z_3)^2+2Z_2^2+2Z_3^2)\\
&\leq 0
\end{split}
\end{equation}

Hence $\tilde{\mathcal{T}}$ is indeed invariant.
\end{proof}
Note that the boundary $$\mathcal{C}^-_{\spin(7)}\cap\{Z_2+Z_3-Z_1= 0\}\cap\{Z_2Z_4+Z_3Z_4-6=0\}$$ is a 1-dimensional invariant set and it contains $P_0^{(1)}$ and
$P_{AC-1}=\left(\frac{1}{7},\frac{1}{7},\frac{1}{7},\frac{1}{7},\frac{2}{7},\frac{1}{7},\frac{1}{7},21\right)$.
It is clear that $\gamma^{(1)}_{\left(-\frac{1}{\sqrt{2}},\frac{1}{\sqrt{2}}\right)}$ is tangent to $\partial\tilde{\mathcal{T}}$ and $\gamma^{(1)}_{(s_1,s_2)}$ is in the interior of $\tilde{\mathcal{T}}$ initially if $s_1>-\frac{1}{\sqrt{2}}$. Hence $\left\{\gamma^{(1)}_{(s_1,s_2)}\mid (s_1,s_2)\in\mathbb{S}^1, s_1\geq -\frac{1}{\sqrt{2}}\right\}$ is a 1-parameter family of $\spin(7)$ metrics on $M_{1,1}^{(1)}$, where $\gamma^{(1)}_{\left(-\frac{1}{\sqrt{2}},\frac{1}{\sqrt{2}}\right)}$ is an AC metric. In summary, we have 
\begin{lemma}
\label{lem: exceptional long existing}
Metrics represented by $\gamma^{(2)}_{(s_1,s_2)}$ on $M_{1,1}^{(2)}$ with $(s_1,s_2)\in\mathbb{S}^1$ and $s_1\geq -\frac{3}{\sqrt{10}}$ are forward complete. Metrics represented by $\gamma^{(1)}_{(s_1,s_2)}$ on $M_{1,1}^{(1)}$ with $(s_1,s_2)\in\mathbb{S}^1$ and $s_1\geq -\frac{1}{\sqrt{2}}$ are forward complete. Moreover, metrics $\gamma_{\left(-\frac{3}{\sqrt{10}},\frac{1}{\sqrt{10}}\right)}^{(2)}$ and $\gamma^{(1)}_{\left(-\frac{1}{\sqrt{2}},\frac{1}{\sqrt{2}}\right)}$ are AC.
\end{lemma}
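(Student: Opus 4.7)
The plan combines the compact trapping sets $\tilde{\mathcal{S}}$ and $\tilde{\mathcal{T}}$ from the two lemmas above with the explicit eigenstructure at $P_0^{(2)}$ and $P_0^{(1)}$ from Section~\ref{sec: critical points}. For each admissible $(s_1,s_2)\in\mathbb{S}^1$, the linearized solution
\[
\gamma^{(i)}_{(s_1,s_2)}(\eta) = P_0^{(i)} + e^{2\eta/3}(s_1 v_1 + s_2 v_2) + O(e^{(2/3+\delta)\eta})
\]
already sits on the ``symmetric'' boundary faces of the respective trapping set: one reads off directly from the component lists in Section~\ref{sec: critical points} with $(k,l)=(1,1)$ that $v_1,v_2$ are tangent to $\{X_2=X_3,\,Z_2=Z_3\}$ when $i=2$ and to $\{Z_1=Z_2+Z_3\}$ when $i=1$. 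So the question of whether the curve enters the trapping set initially reduces to the sign of the directional derivative of the one remaining boundary function at $P_0^{(i)}$ in the direction $s_1 v_1 + s_2 v_2$.

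For $i=2$ the remaining boundary function is $Z_2Z_3Z_4^2-9$. A short calculation with the explicit eigenvectors at $P_0^{(2)}$ shows that its directional derivative is a positive multiple of $-(s_1+3s_2)$; on $\mathbb{S}^1$ with $s_2>0$ this is $\le 0$ exactly when $s_1\ge -3/\sqrt{10}=s^{(2)}_*$. For $i=1$ the remaining boundary function is $(Z_2+Z_3)Z_4-6$, and the analogous derivative at $P_0^{(1)}$ is a positive multiple of $-(s_1+s_2)$, non-positive on $\mathbb{S}^1\cap\{s_2>0\}$ exactly when $s_1\ge -1/\sqrt{2}=s^{(1)}_*$. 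When $s_1>s^{(i)}_*$ the linearized motion pushes strictly into the trapping set at order $e^{2\eta/3}$, the invariance of $\tilde{\mathcal{S}}$ and $\tilde{\mathcal{T}}$ keeps the full integral curve there for all future $\eta$, and Lemma~5.1 of \cite{buzano_family_2015} (already invoked in the paper) gives $\lim_{\eta\to\infty}t=\infty$ and hence forward completeness.

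For the critical parameters $s_1=s^{(i)}_*$ the above directional derivative vanishes and the linearized motion is tangent to the boundary of the trapping set at $P_0^{(i)}$. By the non-transverse-crossing argument used repeatedly in the paper, $\gamma^{(i)}_{(s^{(i)}_*,s_2)}$ must lie entirely on the $1$-dimensional invariant intersection $\partial\tilde{\mathcal{S}}\cap\{Z_2Z_3Z_4^2=9\}$, respectively $\partial\tilde{\mathcal{T}}\cap\{(Z_2+Z_3)Z_4=6\}$. From Section~\ref{sec: critical points} these $1$-dimensional invariant sets are compact and contain exactly two critical points: $P_0^{(i)}$ and the relevant AC critical point $P_{AC-2}$ or $P_{AC-1}$. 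Since the curve emerges from $P_0^{(i)}$ at $\eta\to-\infty$, its $\omega$-limit must be that other critical point, and translating back via the coordinate change to $(a,b,c,f)$ yields an AC asymptotic cone over the homogeneous Einstein $N_{1,1}$.

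The main obstacle is the last step: ruling out that the $\omega$-limit on the $1$-dimensional boundary is a periodic orbit or a degenerate invariant subset rather than $P_{AC-i}$. A compact $1$-dimensional invariant set carrying a smooth flow cannot support both a periodic orbit and an interior critical point, so periodicity is eliminated once the boundary set is shown to be a simple arc (or circle) with $P_{AC-i}$ in its interior; combined with a linearization at $P_{AC-i}$ exhibiting an attracting direction tangent to the boundary, this forces the desired heteroclinic. For $i=2$ the explicit Cvetic--Gibbons--Hull solution on $M_{1,1}^{(2)}$, and for $i=1$ the Calabi hyperk\"ahler metric on $M_{1,1}^{(1)}=T^*\mathbb{CP}^2$, provide an independent identification of the connecting orbit and confirm that the boundary curves realize the claimed AC asymptotics.
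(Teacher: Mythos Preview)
Your proposal follows the paper's approach closely: use the compact invariant sets $\tilde{\mathcal{S}}$ and $\tilde{\mathcal{T}}$ established in the two preceding lemmas, verify via the linearization at $P_0^{(i)}$ that $\gamma^{(i)}_{(s_1,s_2)}$ enters the trapping set for the stated range of $(s_1,s_2)$, and identify the critical boundary parameter with the tangent direction to the one-dimensional invariant edge. Your explicit computation of the directional derivatives of the boundary functions (yielding $-(s_1+3s_2)$ and $-(s_1+s_2)$ up to positive factors) is exactly the verification the paper leaves implicit when it writes ``is tangent to $\partial\tilde{\mathcal{S}}$ if $(s_1,s_2)=(-3/\sqrt{10},1/\sqrt{10})$'' and ``is in the interior initially if $s_1>-3/\sqrt{10}$.''

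Two small remarks. First, for $i=1$ the linearized direction $s_1v_1+s_2v_2$ is only \emph{tangent} to the face $\{Z_2+Z_3=Z_1\}$ at first order, so ``pushes strictly into the trapping set'' overstates things; the correct statement is that one boundary function strictly decreases while the other has zero first-order variation, and then the invariance computation for $\tilde{\mathcal{T}}$ (which shows the flow is inward on that face whenever $(Z_2+Z_3)Z_4\le 6$) handles the second face. The paper glosses over this in the same way. Second, the phrase ``non-transverse-crossing argument'' is not quite the right justification for the boundary curve lying on the one-dimensional invariant set; the actual reason is that the one-dimensional set is invariant and the curve's tangent direction at $P_0^{(i)}$ agrees with it, so the unstable-manifold uniqueness (Theorem~4.5 of \cite{coddington_theory_1955}, already invoked) forces them to coincide. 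Your subsequent discussion of convergence to $P_{AC-j}$ on this arc is more careful than the paper's, which in fact defers that point to Lemma~\ref{lem: asymp}; the paper's version there notes that for $i=2$ the boundary curve lies on a straight line $\{Z_2=Z_3\}\cap\{3Z_1+3Z_2=1\}$, making the heteroclinic immediate.
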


\begin{figure}[h!] 
\centering
\begin{subfigure}{.4\textwidth}
  \centering 
  \includegraphics[clip,trim=17cm 1cm 3cm 3cm,width=1\linewidth]{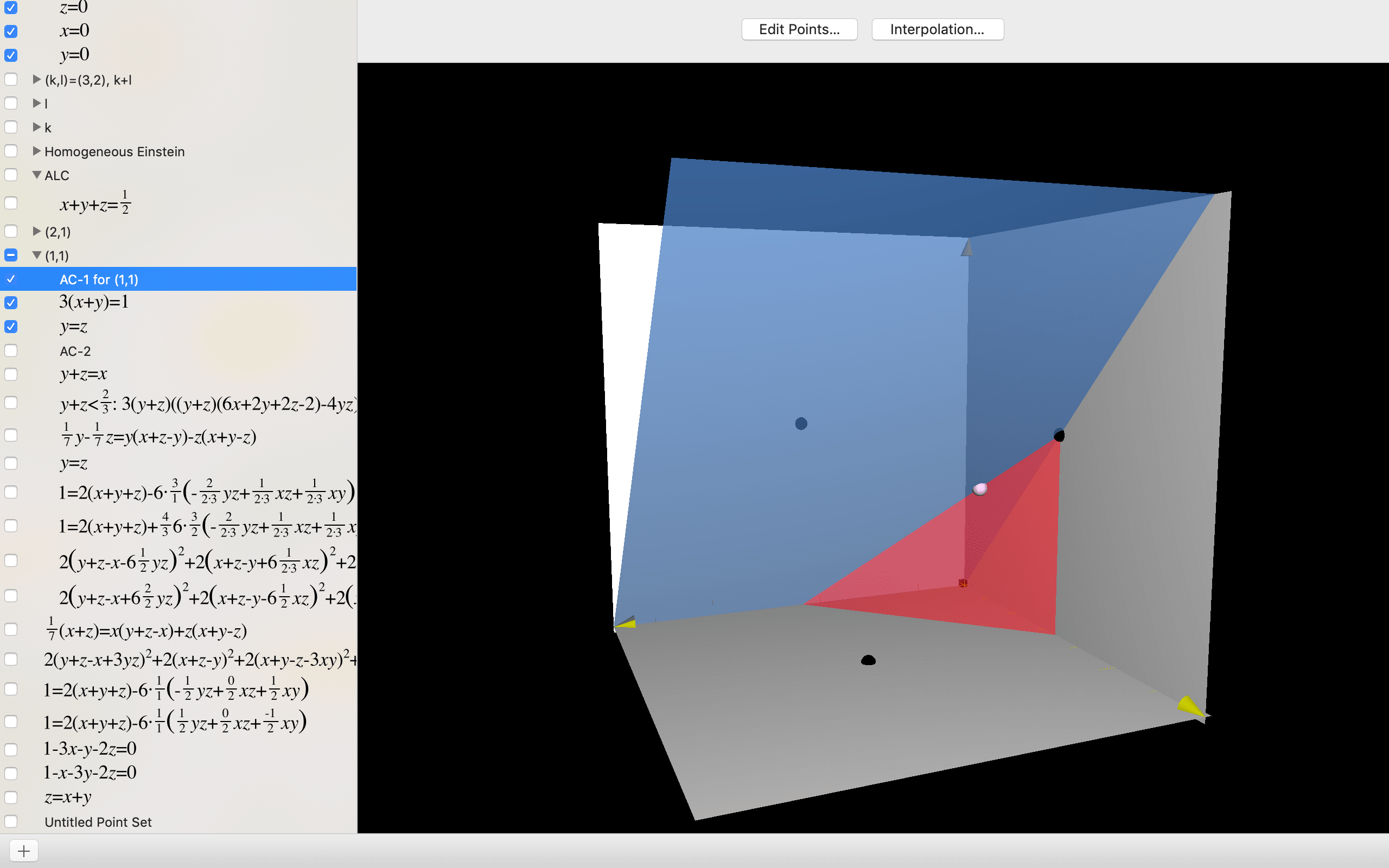}
\caption{$\gamma^{(2)}_{\left(-\frac{3}{\sqrt{10}},\frac{1}{\sqrt{10}}\right)}$}
\end{subfigure}
\begin{subfigure}{.4\textwidth}
  \centering 
  \includegraphics[clip,trim=17cm 1cm 3cm 3cm,width=1\linewidth]{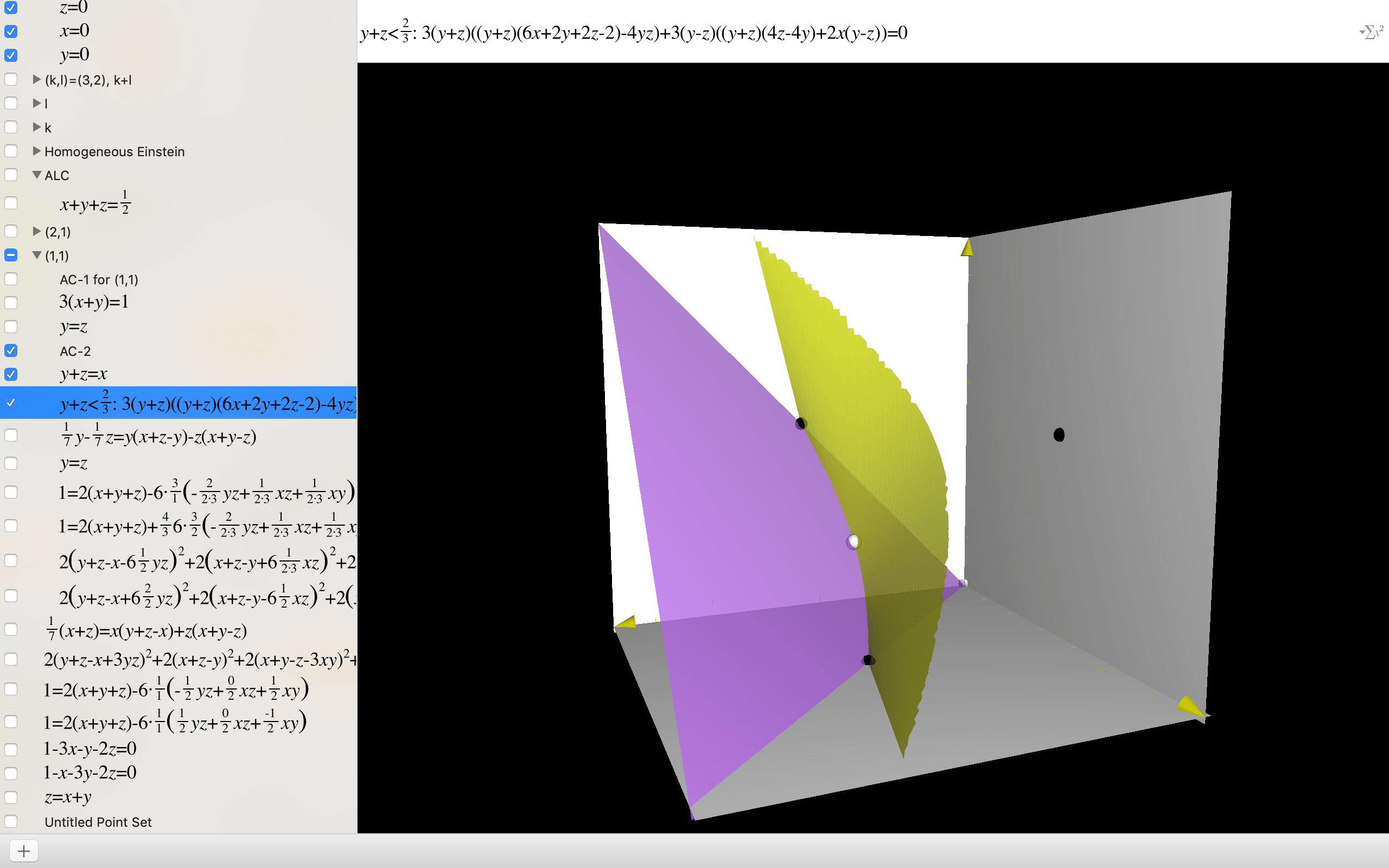}
\caption{$\gamma^{(1)}_{\left(-\frac{1}{\sqrt{2}},\frac{1}{\sqrt{2}}\right)}$}
\end{subfigure}
\label{fig: AC metrics}
\caption{Two AC metrics in $\tilde{\mathcal{S}}$ and $\tilde{\mathcal{T}}$ are in fact represented by algebraic curves. In fact, $\gamma^{(2)}_{\left(-\frac{3}{\sqrt{10}},\frac{1}{\sqrt{10}}\right)}$ lies in a straight line $\{Z_2=Z_3\}\cap\{3Z_1+3Z_2=1\}$}
\end{figure}

By the invariance of $\tilde{\mathcal{S}}$ and $\tilde{\mathcal{T}}$, we also obtain forward complete conically singular metrics with $N_{1,1}$ as principal orbits. 
One can check that for each one of $\mathcal{L}(P_{AC-1})$ and $\mathcal{L}(P_{AC-2})$, there exists a unique unstable eigenvector that is tangent to $\mathcal{C}^-_{\spin(7)}$ and $\mathcal{C}^+_{\spin(7)}$, respectively. Integral curves that emanates from $P_{AC-1}$ and $P_{AC-2}$ along these eigenvectors lie in $\mathcal{C}_{RF}\cap \{X_2=X_3, Z_2=Z_3\}$. Recall Remark \ref{rem: A8}, we know that they are forward complete. In fact, the integral curve that emanates from $P_{AC-1}$ is the geometric analogy to $\mathbb{A}_8$ in \cite{cvetic_cohomogeneity_2002}.

\subsection{Asymptotics}
\label{sec: asymp}
Without further specifying, we use $\gamma$ to denote either $\gamma^{(k+l)}_{(s_1,s_2)}$ or $\gamma^{(k)}_{(s_1,s_2)}$ in Lemma \ref{lem: generic long existing} and Lemma \ref{lem: exceptional long existing} that is defined on $\mathbb{R}$ is this section. We prove the following.
\begin{lemma}
\label{lem: asymp}
For each $\gamma$ in Lemma \ref{lem: generic long existing}, $\lim\limits_{\eta\to\infty}\gamma=P_1$. For each $\gamma$ in Lemma \ref{lem: exceptional long existing}, we have 
$$
\lim_{\eta\to\infty}\gamma_{(s_1,s_2)}^{(2)}=\left\{\begin{array}{ll}
P_1& s_1>-\frac{3}{\sqrt{10}}\\
P_{AC-1}& s_1=-\frac{3}{\sqrt{10}}
\end{array}\right.,\quad \lim_{\eta\to\infty}\gamma_{(s_1,s_2)}^{(1)}=\left\{\begin{array}{ll}
P_1& s_1>-\frac{1}{\sqrt{2}}\\
P_{AC-2}& s_1=-\frac{1}{\sqrt{2}}
\end{array}\right..
$$
\end{lemma}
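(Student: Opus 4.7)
The plan is to show, for each integral curve $\gamma$ under consideration, that the $\omega$-limit set $\omega(\gamma)$ is a single critical point, identified by combining monotonicity along $\gamma$ with the linearization analysis of Section \ref{sec: critical points}. Since each $\gamma$ is trapped in one of the compact invariant sets ($\check{\mathcal{S}}$, $\check{\mathcal{T}_k}$, $\tilde{\mathcal{S}}$, $\tilde{\mathcal{T}}$) established in Section \ref{sec: global}, $\omega(\gamma)$ is automatically a non-empty, compact, connected, invariant subset, and the task reduces to pinning it down.

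For the generic ALC cases ($\gamma^{(k+l)}_{(s_1,s_2)}$ in $\check{\mathcal{S}}$ and $\gamma^{(k)}_{(s_1,s_2)}$ in $\check{\mathcal{T}_k}$), I plan to exploit the fact that the invariance arguments in Section \ref{sec: global} establish $\mathcal{G} - X_4 \geq 0$ on the entire compact set, not merely on its boundary. Combined with the identity $Z_4' = Z_4(X_4 - \mathcal{G})$ this gives $Z_4' \leq 0$, so $Z_4$ is monotonically non-increasing along $\gamma$. LaSalle's invariance principle then places $\omega(\gamma)$ inside the largest invariant subset of $\{\mathcal{G} - X_4 = 0\}$; by the chain $\mathcal{G} - X_4 \geq \mathcal{A} \geq \mathcal{Q} \geq 0$ and Proposition \ref{each slice is contained} (where equality holds only at the isolated point $(\alpha, \beta) = (0, 1)$ corresponding to the repelling starting orbit), this forces $\omega(\gamma) \subset \{Z_4 = 0\} \cap \mathcal{C}^\pm_{\spin(7)} = \mathcal{C}_{G_2}$. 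On the 2-dimensional $\mathcal{C}_{G_2}$, the $G_2$ phase portrait is explicit from \cite{cleyton_cohomogeneity-one_2002}: $P_1$ is the unique hyperbolic sink (with all tangential eigenvalues strictly negative as computed), the Type IV points are sources, and Type V are saddles. Poincaré--Bendixson applied to this 2-dimensional invariant set, together with the known integrability of the $G_2$ system, rules out non-trivial recurrence and identifies $\omega(\gamma) = \{P_1\}$.

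For the exceptional $N_{1,1}$ cases, the interior curves $s_1 > s_*^{(i)}$ admit a parallel treatment on $\tilde{\mathcal{S}}$ (which is effectively 2-dimensional by the $Z_2 = Z_3$ symmetry forced on $\mathcal{C}^+_{\spin(7)}$ when $k = l$) and on $\tilde{\mathcal{T}}$, where the combination of Poincaré--Bendixson and inspection of the critical-point structure leaves $P_1$ as the only possible $\omega$-limit. For the boundary curves $s_1 = s_*^{(i)}$, the linearized solution \eqref{eqn: linearized solution k+l spin(7)} shows $\gamma^{(i)}_{(s_*^{(i)}, s_2)}$ emanates from $P_0^{(i)}$ tangent to $\partial \tilde{\mathcal{S}}$ (respectively $\partial \tilde{\mathcal{T}}$); Lemma \ref{lem: invariant tilde S} and its analogue show these boundaries are 1-dimensional invariant curves containing exactly two critical points, $P_0^{(i)}$ and $P_{AC-*}$, so confinement to such a 1-dimensional orbit and the fact that $\gamma$ flows away from the source $P_0^{(i)}$ force $\gamma \to P_{AC-*}$. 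The principal obstacle throughout is ruling out non-trivial recurrence in $\omega(\gamma)$ and verifying that the Lyapunov surrogate (here $Z_4$, or equivalently the chain $\mathcal{G} - X_4 \geq \mathcal{A} \geq \mathcal{Q} \geq 0$) decreases strictly to its limiting value; both reduce to tracing through the equality cases already handled in Section \ref{sec: global} and to invoking the explicit $G_2$ integrability of \cite{cleyton_cohomogeneity-one_2002}.
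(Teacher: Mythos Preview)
Your treatment of the generic case ($\gamma^{(k+l)}_{(s_1,s_2)}$ in $\check{\mathcal{S}}$ and $\gamma^{(k)}_{(s_1,s_2)}$ in $\check{\mathcal{T}_k}$) is essentially the paper's argument, phrased via LaSalle rather than the direct ``suppose $\lim Z_4=z_*>0$'' contradiction: monotonicity of $Z_4$ from $\mathcal{G}-X_4\geq 0$ forces $\omega(\gamma)\subset\mathcal{C}_{G_2}$, and then the explicit $G_2$ phase portrait pins the limit at $P_1$. The boundary AC curves are also handled the same way in both.

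The gap is in your treatment of the \emph{interior} exceptional curves on $\tilde{\mathcal{S}}$ and $\tilde{\mathcal{T}}$. First, $\tilde{\mathcal{T}}=\mathcal{C}^-_{\spin(7)}\cap\{Z_2+Z_3-Z_1\geq 0\}\cap\{(Z_2+Z_3)Z_4\leq 6\}$ is a full three-dimensional piece of $\mathcal{C}^-_{\spin(7)}$: for $(k,l)=(1,1)$ there is no $Z_2=Z_3$ symmetry forced on curves emanating from $P_0^{(1)}$ (check the eigenvector $v_2$ at $P_0^{(k)}$), so Poincar\'e--Bendixson simply does not apply. Second, even on the genuinely two-dimensional $\tilde{\mathcal{S}}$, Poincar\'e--Bendixson together with ``inspection of the critical-point structure'' does not by itself exclude periodic orbits or heteroclinic cycles; you still need a monotone quantity, and the one you used in the generic case ($Z_4$, via $\mathcal{G}-X_4\geq 0$ on $\check{\mathcal{S}}$) is not available here since $\tilde{\mathcal{S}}\not\subset\check{\mathcal{S}}$ (e.g.\ $P_{AC-2}$ has $Z_4=\tfrac{63}{5}>9=\tfrac{6\Delta}{k+l}$).

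The paper supplies the missing Lyapunov surrogate directly from the boundary computation in Lemma~\ref{lem: invariant tilde S}: on $\tilde{\mathcal{S}}$ one has the identity
\[
\bigl(\sqrt{Z_2Z_3}\,Z_4\bigr)'=\sqrt{Z_2Z_3}\,Z_4\,(X_4-X_1)=\tfrac{1}{3}\,Z_2Z_4\,(Z_2Z_4-3)(2Z_2-Z_1),
\]
and since $Z_2Z_4\leq 3$ and $2Z_2-Z_1\geq 0$ throughout $\tilde{\mathcal{S}}$ (the latter from $X_4\geq 0$), this quantity is monotone non-increasing on \emph{all} of $\tilde{\mathcal{S}}$, not just on the boundary level set. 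The same style of argument, using the functions $(Z_2+Z_3)Z_4$ and $Z_2+Z_3-Z_1$ whose boundary behaviour was computed in the proof of invariance of $\tilde{\mathcal{T}}$, is what drives the $\gamma^{(1)}$ case. You should identify and use these explicit monotone quantities rather than appeal to Poincar\'e--Bendixson.
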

\begin{proof}
We first consider $\gamma=\gamma^{(k+l)}_{(s_1,s_2)}$ in Lemma \ref{lem: generic long existing}.
By Lemma \ref{lem: invariant set}, we know that $\gamma$ stays in the set where $\mathcal{A}\geq 0$ and $\mathcal{G}-X_4\geq 0$ is held.
Since 
$$\frac{d}{d\eta}Z_4(\gamma(\eta))=\left(-Z_4(\mathcal{G}-X_4)\right)(\eta)\leq 0,$$
we know that the function $Z_4(\gamma)$ is decreasing along $\gamma$, hence it converges to some limit $z\geq 0$ as $\eta \to \infty$. Suppose $z_0>0$, then the $\omega$-limit set of $\gamma$, denoted as $\Omega$, is contained in the level set $\left\{Z_4=z_*\right\}$. By the invariance of the $\omega$-limit set, we know that $\Omega\subset \left\{Z_4=z_*\right\}\cap \{\mathcal{G}-X_4=0\}$. But then by \eqref{eqn: G-X4}, we learn that $\mathcal{A}=0$ for each point in $\Omega$. Moreover, from \eqref{eqn: G-X4} we know that $\mathcal{A}=0$ in $\check{\mathcal{S}}$ implies $Z_1=0$ and $Z_2=Z_3$. Hence in addition to \eqref{eqn: conservation Z spin(7)}, points in $\Omega$ also satisfy 
$$Z_1=0,\quad Z_2=Z_3,\quad Z_2+Z_3\leq \frac{2}{3},\quad \mathcal{A}=0.$$ From all the constraints above, we can conclude that $\Omega=\{P_0^{(k+l)}\}$. By the monotonicity of $Z_4$ along $\gamma$ and the fact that $Z_4<\frac{6\Delta}{k+l}$ initially along $\gamma$, we reach a contradiction. Hence $z_*=0$

Similarly, we have the monotonicity of $Z_4$ along $\gamma=\gamma_{(s_1,s_2)}^{(k)}$. If $Z_4$ does not converges to $0$, then the $\omega$-limit set is a subset of $\mathcal{C}^-_{\spin(7)}\cap \{\mathcal{G}-X_4=0\}$. Then the $\omega$-limit can only be $\{P_0^{(k)}\}$, a contradiction.

Hence $Z_4$ converges to $0$ along each $\gamma$ in Lemma \ref{lem: generic long existing}. By Proposition \ref{prop: X4 not negative}, it is clear that $X_4\geq 0$ along $\gamma$. Then from \eqref{eqn: new spin(7) equation} and \eqref{eqn: new opposite spin(7) equation}, we have 
\begin{equation}
\begin{split}
X_4&=\pm\left(\frac{k+l}{2\Delta}Z_2Z_3Z_4-\frac{l}{2\Delta}Z_1Z_3Z_4-\frac{k}{2\Delta}Z_1Z_2Z_4\right)\\
&\leq\left(\frac{k+l}{2\Delta}Z_2Z_3+\frac{l}{2\Delta}Z_1Z_3+\frac{k}{2\Delta}Z_1Z_2\right)Z_4
\end{split}.
\end{equation}
Since all $Z_i$'s are bounded above, we must have
$$\lim_{\eta\to \infty}Z_4(\gamma_s(\eta))=\lim_{\eta\to \infty}X_4(\gamma(\eta))=0.$$
Hence $\Omega$ is an invariant subset of $\mathcal{C}_{G_2}=\mathcal{C}^{\pm}_{\spin(7)}\cap \{X_4=Z_4=0\}$. The subsystem \eqref{eqn: new spin(7) equation} restricted on $\mathcal{C}_{G_2}$ is essentially the dynamic system of cohomogeneity one $G_2$ metrics with $SU(3)/T^2$ as principal orbit studied in \cite{cleyton_cohomogeneity-one_2002}. Integral curves that represent smooth cohomogeneity one $G_2$ metric in \cite{bryant_construction_1989} and \cite{gibbons_einstein_1990} are included in $\mathcal{C}_{G_2}$. In \cite{chi_invariant_2019}, we show that each integral curve of this subsystem is in fact an algebraic curve in $Z_1$, $Z_2$ and $Z_3$. Moreover, they all converges to $P_1$. By the invariance of $\Omega$, it is clear that $P_1\in \Omega$. Since $P_1$ is a sink, we conclude that $\lim\limits_{\eta\to \infty}\gamma(\eta)=P_1$. 

Consider $\gamma_{(s_1,s_2)}^{(2)}$ in Lemma \ref{lem: exceptional long existing}, the monotonicity of $\sqrt{Z_2Z_3}Z_4$ along $\gamma_{(s_1,s_2)}^{(2)}$ imply that functions $\sqrt{Z_2Z_3}Z_4$ converges. Suppose the limit is nonzero, then by the invariance of the $\omega$-limit set and computation similar to \eqref{eqn: derivative of sqrtZ2Z3Z4}, one can conclude that $\sqrt{Z_2Z_3}Z_4=3$ is satisfied by elements in the $\omega$-limit set. With \eqref{eqn: conservation Z spin(7)} and the constraints $Z_2=Z_3$, we know that $\gamma_{(s_1,s_2)}^{(2)}$ lies in a straight line $\{Z_2=Z_3\}\cap\{3Z_1+3Z_2=1\}$. Hence the limit must be  $P_{AC-1}$. 

If $\sqrt{Z_2Z_3}Z_4<3$ initially, the limit must be zero. Since $Z_3=Z_2\geq Z_1$ in $\tilde{\mathcal{S}}$, we know that the $Z_2$ and $Z_3$ variable must not converge to $0$ due to \eqref{eqn: conservation Z spin(7)}. Hence $Z_4$ converges to zero, then it follows that $X_4$ converges to zero and the limit must be $P_1$. As for $\gamma_{(s_1,s_2)}^{(1)}$ in Lemma \ref{lem: exceptional long existing}, the argument is similar and the statement is proven.
\end{proof}

Theorem \ref{thm: main1} and Theorem \ref{thm: main2} are proven by Lemma \ref{lem: generic long existing}, Lemma \ref{lem: exceptional long existing} and Lemma \ref{lem: asymp}. It is a natural question to ask if $\spin(7)$ metrics on $M_{k,l}^{(i)}$ with generic $N_{k,l}$ can be extended to a larger family where the boundary is given by an AC metric. If such an AC metric exists, can it be described as an algebraic curve in our new coordinate? A further question is whether one can show $\gamma_{(s_1,s_2,s_3)}^{(i)}$ is defined on $\mathbb{R}$, integral curves that represent cohomogeneity one Ricci-flat metric that may not have any special holonomy.

\section{Appendix}
\subsection{Non-negativity of $r_{(\alpha,\beta)}(q_1,q_2)$}
We show that $r_{(\alpha,\beta)}(q_1,q_2)\geq 0$ for any $(\alpha,\beta)\in \left[0,\frac{1}{2}\right]\times (0,1]$. Firstly, we have
\scriptsize
\begin{equation}
\begin{split}
&r(\alpha,\beta)\\
& = 27\beta^2\bigg[\frac{243}{16384} \alpha^8 \beta^6+\left(-\frac{81}{512}\alpha^7+\frac{81}{1024}\alpha^6\right) \beta^5+\left(\frac{81}{256} \alpha^6-\frac{189}{256} \alpha^5+\frac{27}{128}\alpha^4\right) \beta^4\\
&\quad +\left(-\frac{153}{32}  \alpha^5 -\frac{27}{32} \alpha^4+\frac{27}{16} \alpha^3-\frac{9}{32} \alpha^2 \right) \beta^3+\left( 18 \alpha^4-\frac{39}{2} \alpha^3+\frac{159}{16} \alpha^2-\frac{9}{4} \alpha+\frac{3}{16}\right) \beta^2\\
&\quad +\left(21 \alpha^3-15 \alpha^2+\frac{17}{4} \alpha-\frac{7}{16}\right) \beta+6 \alpha^2-2 \alpha+\frac{1}{4}\bigg]\\
& \geq 27\beta^2\bigg[\left(\frac{81}{256} \alpha^6-\frac{189}{256} \alpha^5+\frac{27}{128}\alpha^4\right) \beta^4\\
&\quad +\left(-\frac{153}{32}  \alpha^5 -\frac{27}{32} \alpha^4+\frac{27}{16} \alpha^3-\frac{9}{32} \alpha^2 \right) \beta^3+\left( 18 \alpha^4-\frac{39}{2} \alpha^3+\frac{159}{16} \alpha^2-\frac{9}{4} \alpha+\frac{3}{16}\right) \beta^2\\
&\quad +\left(21 \alpha^3-15 \alpha^2+\frac{17}{4} \alpha-\frac{7}{16}\right) \beta+6 \alpha^2-2 \alpha+\frac{1}{4}\bigg]\quad \text{since $\alpha\in \left[0,\frac{1}{2}\right]$}\\
& \geq 27\beta^2\bigg[\left(-\frac{189}{256} \alpha^5\right) \beta^3\\
&\quad +\left(-\frac{153}{32}  \alpha^5 -\frac{27}{32} \alpha^4+\frac{27}{16} \alpha^3-\frac{9}{32} \alpha^2 \right) \beta^3+\left( 18 \alpha^4-\frac{39}{2} \alpha^3+\frac{159}{16} \alpha^2-\frac{9}{4} \alpha+\frac{3}{16}\right) \beta^2\\
&\quad +\left(21 \alpha^3-15 \alpha^2+\frac{17}{4} \alpha-\frac{7}{16}\right) \beta+6 \alpha^2-2 \alpha+\frac{1}{4}\bigg]\quad \text{since $\beta \in \left[0,1\right]$}\\
& = 27\beta^2\bigg[\left(-\frac{1413}{256}  \alpha^5 -\frac{27}{32} \alpha^4+\frac{27}{16} \alpha^3-\frac{9}{32} \alpha^2 \right) \beta^3+\left( 18 \alpha^4-\frac{39}{2} \alpha^3+\frac{159}{16} \alpha^2-\frac{9}{4} \alpha+\frac{3}{16}\right) \beta^2\\
&\quad +\left(21 \alpha^3-15 \alpha^2+\frac{17}{4} \alpha-\frac{7}{16}\right) \beta+6 \alpha^2-2 \alpha+\frac{1}{4}\bigg]\\
& \geq 27\beta^2\bigg[\left(-\frac{1413}{256}  \alpha^5 -\frac{27}{32} \alpha^4 -\frac{9}{32} \alpha^2 \right) \beta^2+\left( 18 \alpha^4-\frac{39}{2} \alpha^3+\frac{159}{16} \alpha^2-\frac{9}{4} \alpha+\frac{3}{16}\right) \beta^2\\
&\quad +\left(21 \alpha^3-15 \alpha^2+\frac{17}{4} \alpha-\frac{7}{16}\right) \beta+6 \alpha^2-2 \alpha+\frac{1}{4}\bigg] \quad \text{since $\beta \in \left[0,1\right]$}\\
& = 27\beta^2\bigg[\left(-\frac{1413}{256}\alpha^5+\frac{549}{32} \alpha^4-\frac{39}{2} \alpha^3+\frac{309}{32} \alpha^2-\frac{9}{4} \alpha+\frac{3}{16}\right) \beta^2 +\left(21 \alpha^3-15 \alpha^2+\frac{17}{4} \alpha-\frac{7}{16}\right) \beta+6 \alpha^2-2 \alpha+\frac{1}{4}\bigg]\\
& \geq 27\beta^2\bigg[\left(-\frac{1413}{512}\alpha^4+\frac{549}{32} \alpha^4-\frac{39}{2} \alpha^3+\frac{309}{32} \alpha^2-\frac{9}{4} \alpha+\frac{3}{16}\right) \beta^2 +\left(21 \alpha^3-15 \alpha^2+\frac{17}{4} \alpha-\frac{7}{16}\right) \beta+6 \alpha^2-2 \alpha+\frac{1}{4}\bigg]\\
& \quad \text{since $\alpha\in \left[0,\frac{1}{2}\right]$}\\
& = 27\beta^2\bigg[\left(\frac{7371}{512} \alpha^4-\frac{39}{2} \alpha^3+\frac{309}{32} \alpha^2-\frac{9}{4} \alpha+\frac{3}{16}\right) \beta^2 +\left(21 \alpha^3-15 \alpha^2+\frac{17}{4} \alpha-\frac{7}{16}\right) \beta+6 \alpha^2-2 \alpha+\frac{1}{4}\bigg] \\
& \geq  27\beta^2\bigg[\left(-\frac{39}{2} \alpha^3+\frac{309}{32} \alpha^2-\frac{9}{4} \alpha+\frac{3}{16}\right) \beta^2 +\left(21 \alpha^3-15 \alpha^2+\frac{17}{4} \alpha-\frac{7}{16}\right) \beta+6 \alpha^2-2 \alpha+\frac{1}{4}\bigg]\\
& \geq  27\beta^2\bigg[\left(\frac{309}{32} \alpha^2-\frac{9}{4} \alpha+\frac{3}{16}\right) \beta^2 +\left(-\frac{39}{2} \alpha^3+21 \alpha^3-15 \alpha^2+\frac{17}{4} \alpha-\frac{7}{16}\right) \beta+6 \alpha^2-2 \alpha+\frac{1}{4}\bigg] \quad \text{since $\beta \in \left[0,1\right]$}\\
& \geq  27\beta^2\bigg[\left(\frac{309}{32} \alpha^2-\frac{9}{4} \alpha+\frac{3}{16}\right) \beta^2 +\left(-15 \alpha^2+\frac{17}{4} \alpha-\frac{7}{16}\right) \beta+6 \alpha^2-2 \alpha+\frac{1}{4}\bigg]
\end{split}.
\end{equation}
\normalsize
Define function $f_\alpha(\beta)=c_2(\alpha)\beta^2+c_1\beta+c_0(\alpha)$, where 
$$
c_2(\alpha)=\frac{309}{32} \alpha^2-\frac{9}{4} \alpha+\frac{3}{16},\quad 
c_1(\alpha)=-15 \alpha^2+\frac{17}{4} \alpha-\frac{7}{16},\quad 
c_0(\alpha)=6 \alpha^2-2 \alpha+\frac{1}{4}.
$$
It is clear that $c_2,c_0>0$ and $c_1<0$ for any $\alpha\in \left[0,\frac{1}{2}\right]$. It is also clear that $-\frac{c_1}{2c_2}\geq 1$ for $\alpha\in \left[0,\frac{2+\sqrt{73}}{69}\right]$. 
Consider 
$$
\Delta(\alpha)=(c_1^2-4c_2c_0)(\alpha)=-\frac{27}{4}\alpha^4+\frac{15}{4} \alpha^3-\frac{31}{32}\alpha^2+\frac{1}{32}\alpha+\frac{1}{256}.
$$
Since
$$
\Delta''(\alpha)=-81\alpha^2+\frac{45}{2}\alpha-\frac{31}{16}<0, \quad \Delta'\left(\frac{3}{20}\right)=-\frac{779}{8000}<0,\quad \Delta\left(\frac{3}{20}\right)=-\frac{2537}{640000}<0,
$$
it is clear that
$\Delta(\alpha)<0$ any $\alpha\in \left[\frac{3}{20}, \frac{1}{2}\right]$. Since $\frac{2+\sqrt{73}}{69}\approx 0.1528>0.15=\frac{3}{20}$, we have 
$$
\begin{array}{ll}
f_\alpha(\beta)\geq f_\alpha(1)=\frac{21}{32}\alpha^2\geq 0 &\text{for $\alpha\in \left[0,\frac{2+\sqrt{73}}{69}\right]$}\\
f_\alpha(\beta)>0 &\text{for $\alpha\in \left[\frac{2+\sqrt{73}}{69},\frac{1}{2}\right]$}
\end{array}.
$$
Hence $r(\alpha,\beta)\geq 0$ for any  $(\alpha,\beta)\in \left[0,\frac{1}{2}\right]\times (0,1]$ and only vanishes at $(0,1)$. 

\subsection{Formula of $r_{(\alpha,\beta,\delta)}(p_1,p_2)$}
We present the formular for $r_{(\alpha,\beta,\delta)}(p_1,p_2)$ for any $(\alpha,\beta,\delta)\in [0,1]\times[0,1]\times (0,1]$. Let $\rho=\frac{l}{k}$, we have
\tiny
\begin{equation}
\begin{split}
r_{(\alpha,\beta,\delta)}(p_1,p_2)&=36 \delta^2 \bigg(36 \rho^4 \alpha^4 \beta^4 \delta^2-72 \rho^4 \alpha^3 \beta^4 \delta^2+144 \rho^3 \alpha^4 \beta^4 \delta^2+108 \rho^4 \alpha^2 \beta^4 \delta^2+45 \rho^3 \alpha^5 \beta^3 \delta-6 \rho^3 \alpha^4 \beta^4 \delta-72 \rho^3 \alpha^4 \beta^3 \delta^2\\
&\quad +45 \rho^3 \alpha^3 \beta^5 \delta-216 \rho^3 \alpha^3 \beta^4 \delta^2+216 \rho^2 \alpha^4 \beta^4 \delta^2-72 \rho^4 \alpha \beta^4 \delta^2-69 \rho^3 \alpha^4 \beta^3 \delta+60 \rho^3 \alpha^3 \beta^4 \delta+144 \rho^3 \alpha^3 \beta^3 \delta^2\\
&\quad -63 \rho^3 \alpha^2 \beta^5 \delta+216 \rho^3 \alpha^2 \beta^4 \delta^2+135 \rho^2 \alpha^5 \beta^3 \delta-18 \rho^2 \alpha^4 \beta^4 \delta-216 \rho^2 \alpha^4 \beta^3 \delta^2+135 \rho^2 \alpha^3 \beta^5 \delta-216 \rho^2 \alpha^3 \beta^4 \delta^2\\
&\quad +144 \rho \alpha^4 \beta^4 \delta^2+36 \rho^4 \beta^4 \delta^2+174 \rho^3 \alpha^3 \beta^3 \delta-144 \rho^3 \alpha^2 \beta^3 \delta^2+63 \rho^3 \alpha \beta^5 \delta-72 \rho^3 \alpha \beta^4 \delta^2+15 \rho^2 \alpha^6 \beta^2-4 \rho^2 \alpha^5 \beta^3\\
&\quad -63 \rho^2 \alpha^5 \beta^2 \delta+26 \rho^2 \alpha^4 \beta^4-78 \rho^2 \alpha^4 \beta^3 \delta+108 \rho^2 \alpha^4 \beta^2 \delta^2-4 \rho^2 \alpha^3 \beta^5+51 \rho^2 \alpha^3 \beta^4 \delta+288 \rho^2 \alpha^3 \beta^3 \delta^2+15 \rho^2 \alpha^2 \beta^6\\
&\quad -126 \rho^2 \alpha^2 \beta^5 \delta+108 \rho^2 \alpha^2 \beta^4 \delta^2+135 \rho \alpha^5 \beta^3 \delta-18 \rho \alpha^4 \beta^4 \delta -216 \rho \alpha^4 \beta^3 \delta^2+135 \rho \alpha^3 \beta^5 \delta-72 \rho \alpha^3 \beta^4 \delta^2\\
&\quad +36 \alpha^4 \beta^4 \delta^2-174 \rho^3 \alpha^2 \beta^3 \delta-60 \rho^3 \alpha \beta^4 \delta+72 \rho^3 \alpha \beta^3 \delta^2-45 \rho^3 \beta^5 \delta-10 \rho^2 \alpha^5 \beta^2+28 \rho^2 \alpha^4 \beta^3+120 \rho^2 \alpha^4 \beta^2 \delta\\
&\quad -48 \rho^2 \alpha^3 \beta^4+216 \rho^2 \alpha^3 \beta^3 \delta-144 \rho^2 \alpha^3 \beta^2 \delta^2+36 \rho^2 \alpha^2 \beta^5+120 \rho^2 \alpha^2 \beta^4 \delta-144 \rho^2 \alpha^2 \beta^3 \delta^2-6 \rho^2 \alpha \beta^6+63 \rho^2 \alpha \beta^5 \delta\\
&\quad +30 \rho \alpha^6 \beta^2-8 \rho \alpha^5 \beta^3-126 \rho \alpha^5 \beta^2 \delta+52 \rho \alpha^4 \beta^4+51 \rho \alpha^4 \beta^3 \delta+216 \rho \alpha^4 \beta^2 \delta^2-8 \rho \alpha^3 \beta^5-78 \rho \alpha^3 \beta^4 \delta+144 \rho \alpha^3 \beta^3 \delta^2\\
&\quad +30 \rho \alpha^2 \beta^6-63 \rho \alpha^2 \beta^5 \delta+45 \alpha^5 \beta^3 \delta-6 \alpha^4 \beta^4 \delta-72 \alpha^4 \beta^3 \delta^2+45 \alpha^3 \beta^5 \delta+69 \rho^3 \alpha \beta^3 \delta+6 \rho^3 \beta^4 \delta+81 \rho^2 \alpha^4 \beta^2\\
&\quad -24 \rho^2 \alpha^3 \beta^3-306 \rho^2 \alpha^3 \beta^2 \delta+44 \rho^2 \alpha^2 \beta^4-306 \rho^2 \alpha^2 \beta^3 \delta+108 \rho^2 \alpha^2 \beta^2 \delta^2+36 \rho^2 \alpha \beta^5-129 \rho^2 \alpha \beta^4 \delta+15 \rho^2 \beta^6\\
&\quad -6 \rho \alpha^6 \beta+26 \rho \alpha^5 \beta^2+63 \rho \alpha^5 \beta \delta-20 \rho \alpha^4 \beta^3+120 \rho \alpha^4 \beta^2 \delta-72 \rho \alpha^4 \beta \delta^2-20 \rho \alpha^3 \beta^4+216 \rho \alpha^3 \beta^3 \delta-144 \rho \alpha^3 \beta^2 \delta^2\\
&\quad +26 \rho \alpha^2 \beta^5+120 \rho \alpha^2 \beta^4 \delta-6 \rho \alpha \beta^6+15 \alpha^6 \beta^2-4 \alpha^5 \beta^3-63 \alpha^5 \beta^2 \delta+26 \alpha^4 \beta^4+60 \alpha^4 \beta^3 \delta+108 \alpha^4 \beta^2 \delta^2-4 \alpha^3 \beta^5\\
&\quad -69 \alpha^3 \beta^4 \delta+15 \alpha^2 \beta^6-45 \rho^3 \beta^3 \delta-44 \rho^2 \alpha^3 \beta^2-24 \rho^2 \alpha^2 \beta^3+120 \rho^2 \alpha^2 \beta^2 \delta-48 \rho^2 \alpha \beta^4+129 \rho^2 \alpha \beta^3 \delta-4 \rho^2 \beta^5\\
&\quad +46 \rho \alpha^5 \beta+44 \rho \alpha^4 \beta^2-129 \rho \alpha^4 \beta \delta-4 \rho \alpha^3 \beta^3-306 \rho \alpha^3 \beta^2 \delta+72 \rho \alpha^3 \beta \delta^2+44 \rho \alpha^2 \beta^4-306 \rho \alpha^2 \beta^3 \delta+46 \rho \alpha \beta^5\\
&\quad -6 \alpha^6 \beta+36 \alpha^5 \beta^2+63 \alpha^5 \beta \delta-48 \alpha^4 \beta^3-72 \alpha^4 \beta \delta^2+28 \alpha^3 \beta^4+174 \alpha^3 \beta^3 \delta-10 \alpha^2 \beta^5+81 \rho^2 \alpha^2 \beta^2+28 \rho^2 \alpha \beta^3\\
&\quad -63 \rho^2 \alpha \beta^2 \delta+26 \rho^2 \beta^4-76 \rho \alpha^4 \beta-44 \rho \alpha^3 \beta^2+129 \rho \alpha^3 \beta \delta-44 \rho \alpha^2 \beta^3+120 \rho \alpha^2 \beta^2 \delta-76 \rho \alpha \beta^4+15 \alpha^6+36 \alpha^5 \beta\\
&\quad -45 \alpha^5 \delta+44 \alpha^4 \beta^2-60 \alpha^4 \beta \delta+36 \alpha^4 \delta^2-24 \alpha^3 \beta^3-174 \alpha^3 \beta^2 \delta+81 \alpha^2 \beta^4-10 \rho^2 \alpha \beta^2-4 \rho^2 \beta^3+76 \rho \alpha^3 \beta\\
&\quad +118 \rho \alpha^2 \beta^2 -63 \rho \alpha^2 \beta \delta+76 \rho \alpha \beta^3-4 \alpha^5-48 \alpha^4 \beta+6 \alpha^4 \delta-24 \alpha^3 \beta^2+69 \alpha^3 \beta \delta-44 \alpha^2 \beta^3+15 \rho^2 \beta^2\\
&\quad -46 \rho \alpha^2 \beta-46 \rho \alpha \beta^2+26 \alpha^4+28 \alpha^3 \beta-45 \alpha^3 \delta+81 \alpha^2 \beta^2+6 \rho \alpha \beta-4 \alpha^3-10 \alpha^2 \beta+15 \alpha^2\bigg).\\
&=36\delta^2 \tilde{r}_{(\alpha,\beta,\delta)}(p_1,p_2)
\end{split}
\end{equation}
\normalsize
With the help of Maple, one can conclude that $\tilde{r}_{(\alpha,\beta,\delta)}(p_1,p_2)\geq 0$ for any $(\alpha,\beta,\delta)\in [0,1]\times[0,1]\times (0,1]$ and it only vanishes at $(1,0,1)$ if $(k,l)\neq (1,1)$. If $(k,l)=(1,1)$, the function vanishes at $(1,0,1)$ and $(0,1,1)$. The level set of $\tilde{r}_{(\alpha,\beta,\delta)}(p_1,p_2)=0$ with some selected $(k,l)$ are presented below.
\begin{figure}[h!] 
\centering
\begin{subfigure}{.3\textwidth}
  \centering 
  \includegraphics[clip,trim=12cm 1cm 3cm 3cm,width=1\linewidth]{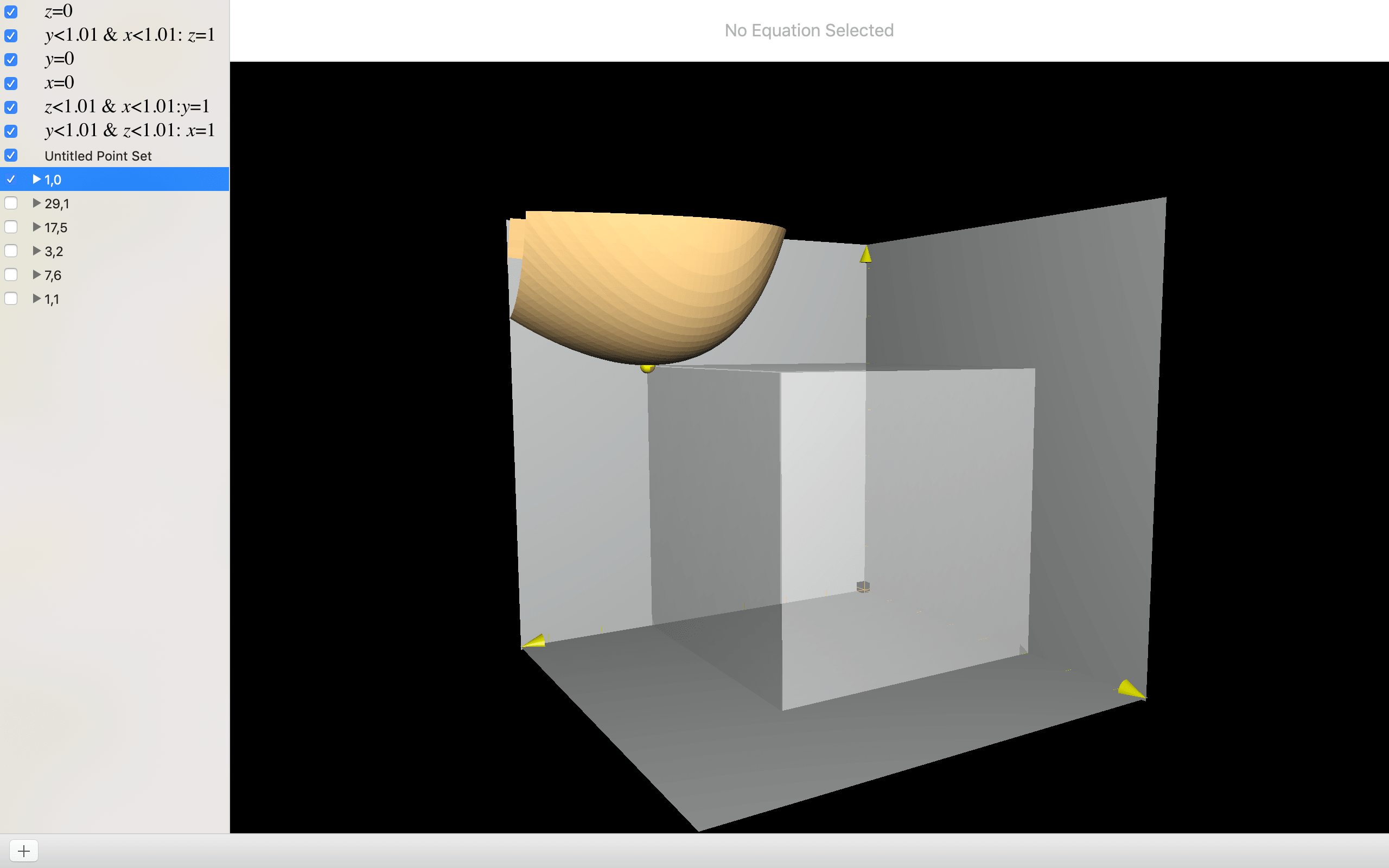}
\caption{$(k,l)=(1,0)$}
\end{subfigure}
\begin{subfigure}{.3\textwidth}
  \centering 
  \includegraphics[clip,trim=12cm 1cm 3cm 3cm,width=1\linewidth]{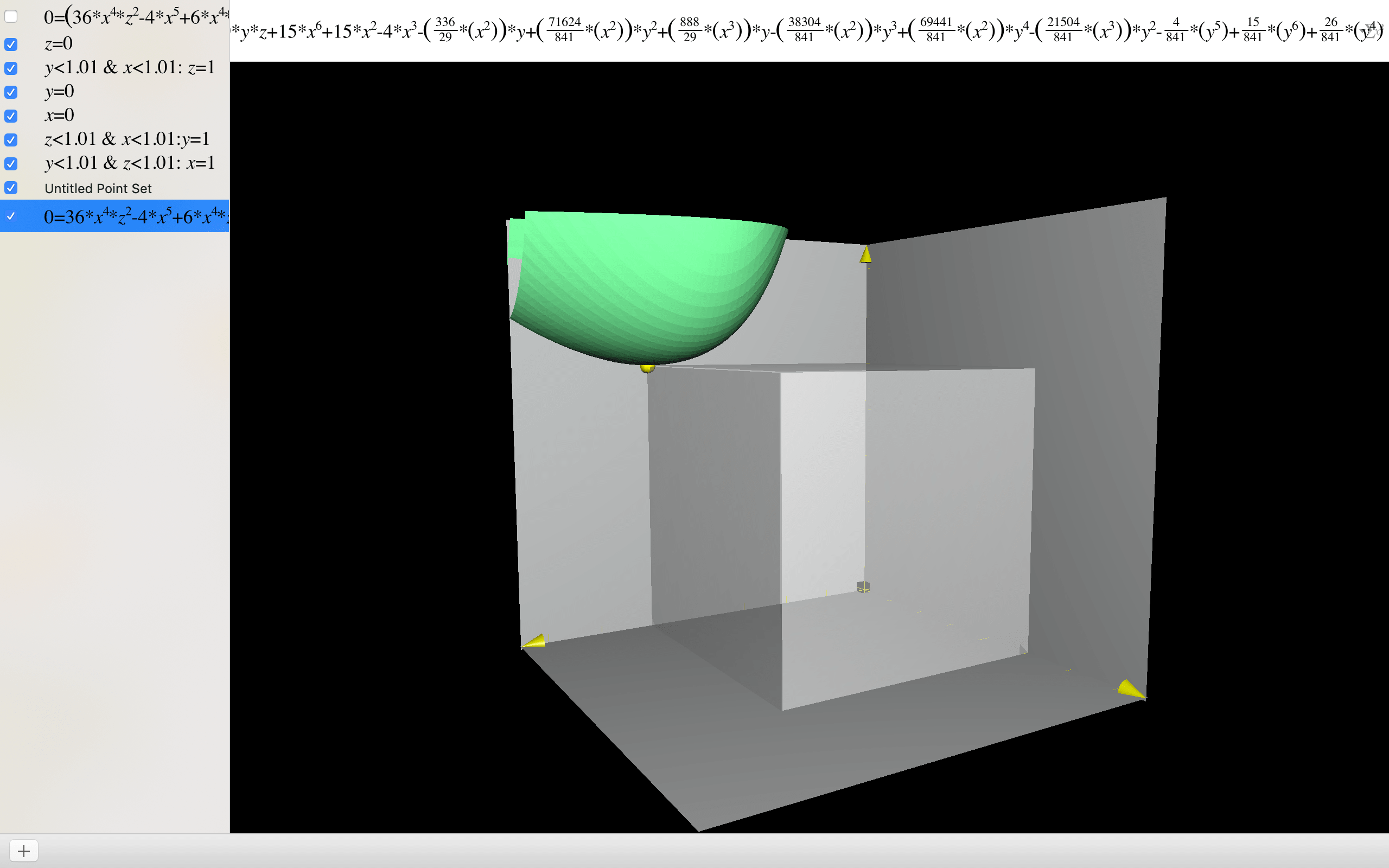}
\caption{$(k,l)=(29,1)$}
\end{subfigure}
\begin{subfigure}{.3\textwidth}
  \centering
  \includegraphics[clip,trim=12cm 1cm 3cm 3cm,width=1\linewidth]{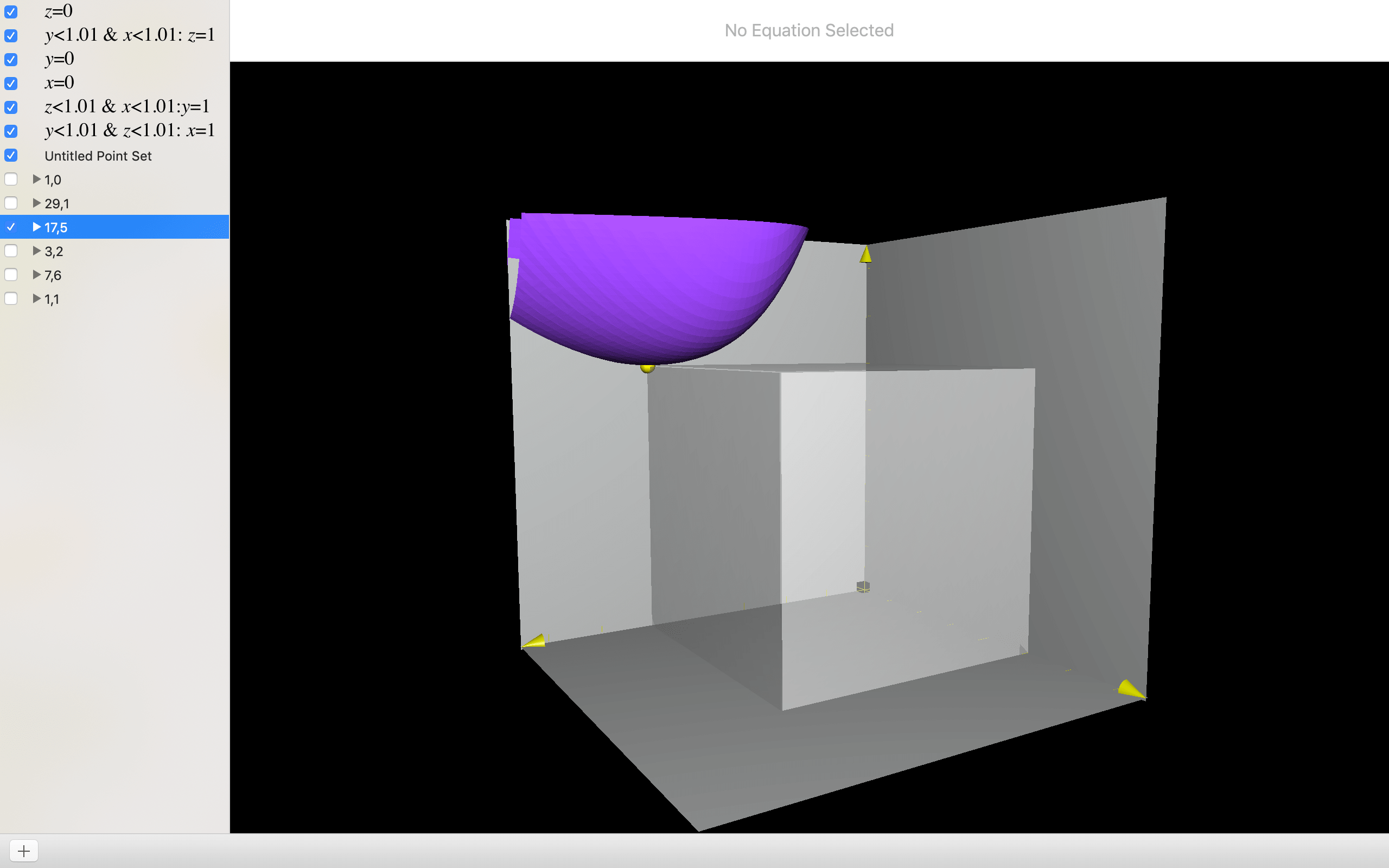}
\caption{$(k,l)=(17,5)$}
\end{subfigure}
\begin{subfigure}{.3\textwidth}
  \centering
  \includegraphics[clip,trim=12cm 1cm 3cm 3cm,width=1\linewidth]{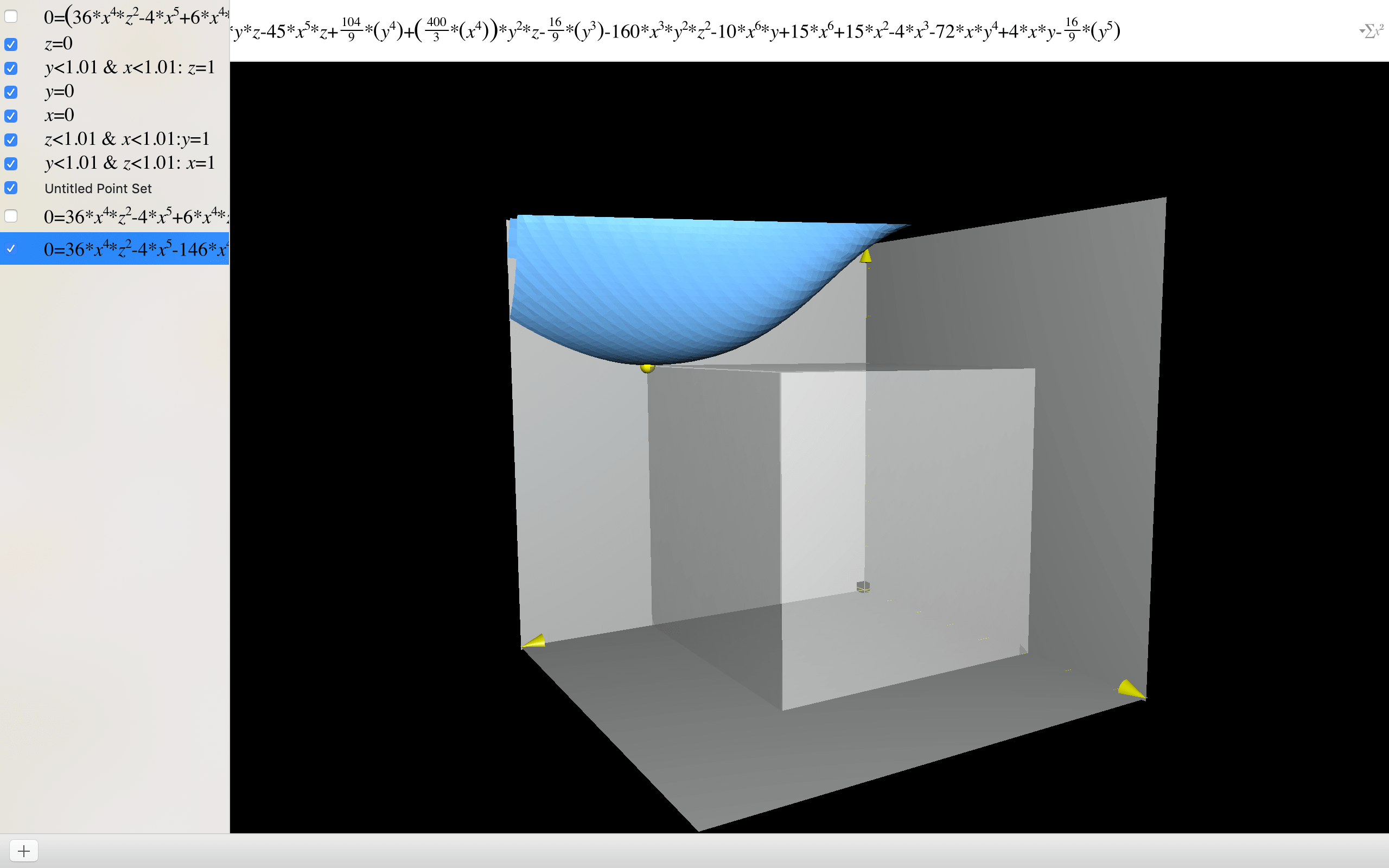}
\caption{$(k,l)=(3,2)$}
\end{subfigure}
\begin{subfigure}{.3\textwidth}
  \centering
  \includegraphics[clip,trim=12cm 1cm 3cm 3cm,width=1\linewidth]{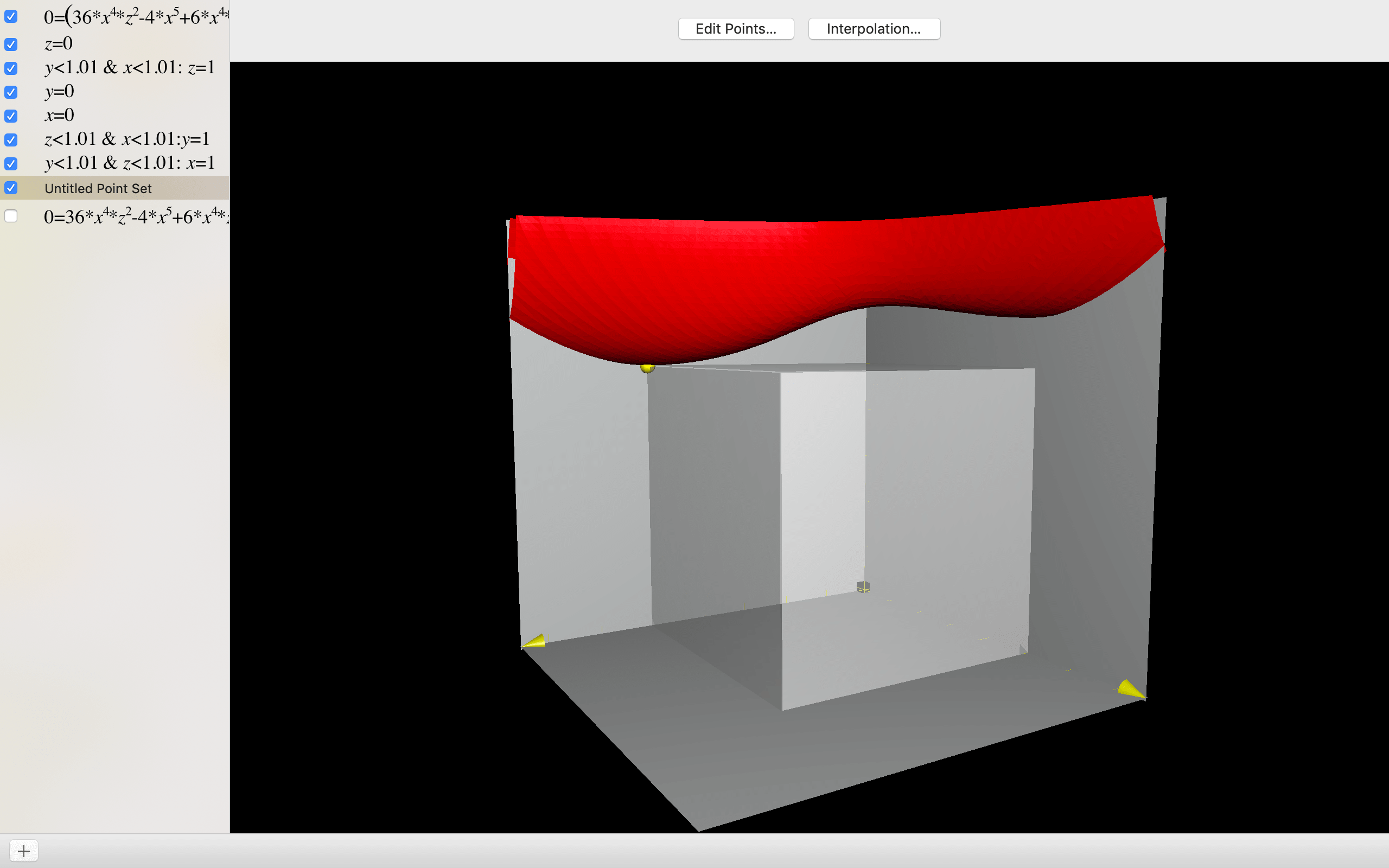}
\caption{$(k,l)=(7,6)$}
\end{subfigure}
\begin{subfigure}{.3\textwidth}
  \centering
  \includegraphics[clip,trim=12cm 1cm 3cm 3cm,width=1\linewidth]{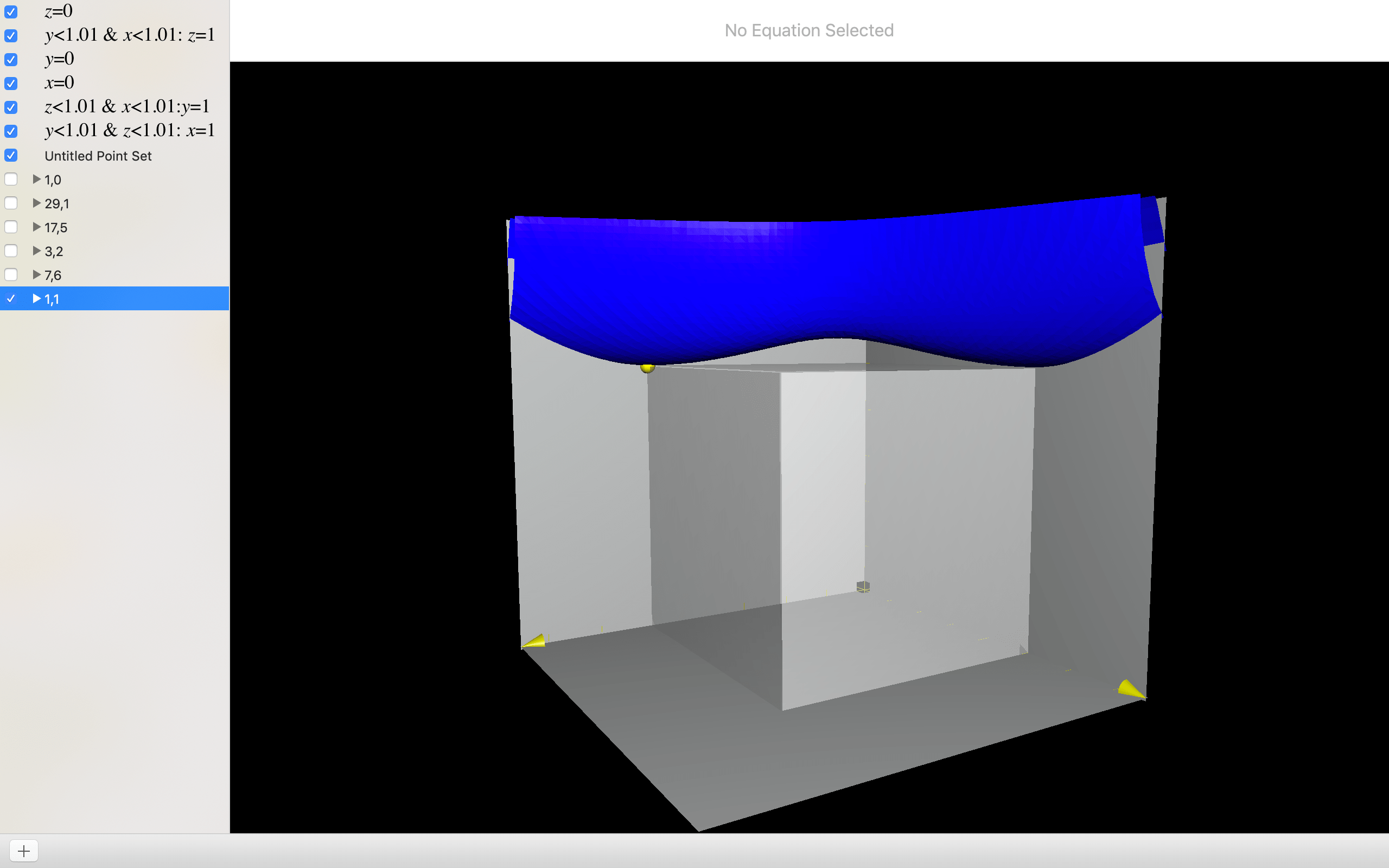}
\caption{$(k,l)=(1,1)$}
\end{subfigure}
\label{fig: resultant-2}
\caption{Figures above are the zero level set of $\tilde{r}_{(\alpha,\beta,\delta)}(p_1,p_2)$, arranged in the increasing order of $\frac{l}{k}$. The cube denote the set $[0,1]\times[0,1]\times (0,1]$ and the dot is $(1,0,1)$. Note that when $(k,l)=(1,1)$, $\tilde{r}_{(\alpha,\beta,\delta)}(p_1,p_2)$ also vanishes at $(0,1,1)$. Such a phenomenon is not surprising since \eqref{eqn: new opposite spin(7) equation} with $(k,l)=(1,1)$ is symmetric with respect to $Z_2$ and $Z_3$.}
\end{figure}
\bibliography{Bibliography}
\bibliographystyle{alpha}
\end{document}